\documentclass[12pt]{amsart}

\usepackage[utf8]{inputenc}
\usepackage[T1,T2A]{fontenc}
\usepackage[english]{babel}
\usepackage{amsfonts}
\usepackage{amsbsy}
\usepackage{amssymb}
\usepackage{graphicx}
\usepackage{mathrsfs}
\usepackage[colorlinks,breaklinks,allcolors=blue]{hyperref}
\usepackage{longtable}
\usepackage{enumitem}
\usepackage{tikz}
\usepackage{comment}

\catcode`~=11 % make LaTeX treat tilde (~) like a normal character
\newcommand{\urltilde}{\kern -.15em\lower .7ex\hbox{~}\kern .04em}
\catcode`~=13 % revert back to treating tilde (~) as an active character

\renewcommand{\abovecaptionskip}{0pt}
\renewcommand{\belowcaptionskip}{6pt}
\makeatletter

\renewcommand{\@makecaption}[2]{
\vspace{\abovecaptionskip}%
\sbox{\@tempboxa}{#1. #2}%
\global\@minipagefalse \hbox to \hsize {{\scshape \hfil #1.
#2\hfil}} \vspace{\belowcaptionskip}}

\makeatother

\newcommand{\Hom}{\operatorname{Hom}}
\newcommand{\rk}{\operatorname{rk}}

\newcommand{\Ker}{\operatorname{Ker}}

\newcommand{\Ad}{\operatorname{Ad}}

\newcommand{\GL}{\operatorname{GL}}

\newcommand{\SL}{\operatorname{SL}}
\newcommand{\Sp}{\operatorname{Sp}}
\newcommand{\Spin}{\operatorname{Spin}}
\newcommand{\SO}{\operatorname{SO}}

\newcommand{\Supp}{\operatorname{Supp}}
\newcommand{\Lie}{\operatorname{Lie}}

\newcommand{\ZZ}{\mathbb Z}
\newcommand{\QQ}{\mathbb Q}

\newcommand{\GG}{\mathbb G}
\newcommand{\KK}{\mathbb K}

\newtheorem{theorem}{Theorem}

\newtheorem{proposition}[theorem]{Proposition}
\newtheorem{lemma}[theorem]{Lemma}

\newtheorem*{question*}{Question}

\theoremstyle{definition}

\newtheorem{example}[theorem]{Example}

\theoremstyle{remark}

\newtheorem{remark}[theorem]{Remark}

\makeatletter

\@addtoreset{theorem}{section}

\makeatother

\numberwithin{equation}{section}

\makeatletter

\@addtoreset{theorem}{section}

\makeatother

\numberwithin{equation}{section}

\newcounter{num}[table]
\newcommand{\no}{\refstepcounter{num}\arabic{num}}

\newcounter{alg}
\newcounter{stepalg}[alg]

\newcommand{\newalg}{\refstepcounter{alg}\Alph{alg}}
\newcommand{\step}{\refstepcounter{stepalg}\Alph{alg}\arabic{stepalg}}

\renewcommand{\arraystretch}{1.2}

\begin{document}

\renewcommand{\proofname}{Proof}
\renewcommand{\abstractname}{Abstract}
\renewcommand{\refname}{References}
\renewcommand{\figurename}{Figure}
\renewcommand{\tablename}{Table}

\title[On computing the spherical roots for a class of spherical subgroups]
{On computing the spherical roots\\ for a class of spherical subgroups}

\author{Roman Avdeev}

%\thanks{}

\address{%
{\bf Roman Avdeev} \newline\indent HSE University, Moscow, Russia}

\email{suselr@yandex.ru}

%\date{\today}

\subjclass[2020]{14M27, 14M17, 20G07}

\keywords{Algebraic group, spherical variety, spherical subgroup, spherical root}

\thanks{This paper is an output of a research project implemented as part of the Basic Research Program at HSE University.}

\begin{abstract}
Given a connected reductive algebraic group~$G$, we consider the class of spherical subgroups $H \subset G$ such that $H$ is regularly embedded in a parabolic subgroup $P \subset G$ and $H,P$ have a common Levi subgroup~$L$.
In a previous paper, the author developed a fast algorithm that reduces the computation of the set of spherical roots for such subgroups~$H$ to the case where the quotient of Lie algebras $\Lie P / \Lie H$ is a strictly indecomposable spherical $L$-module.
In this paper, we complete the classification of all such cases and compute the spherical roots for each of them, which enables one to use the above fast algorithm directly for computing the spherical roots for arbitrary spherical subgroups in the class under consideration.
\end{abstract}

\maketitle

\section{Introduction}

Throughout this paper, we work over an algebraically closed field~$\KK$ of characteristic zero.

Let $G$ be a connected reductive algebraic group and let $X$ be a $G$-variety, that is, an algebraic variety equipped with a regular action of~$G$.
The $G$-variety $X$ is said to be \textit{spherical} if it is irreducible, normal, and possesses an open orbit with respect to the induced action of a Borel subgroup $B \subset G$.
An algebraic subgroup $H \subset G$ is said to be \textit{spherical} if $G/H$ is a spherical homogeneous space.

Every spherical variety $X$ contains an open $G$-orbit $O$ and hence can be regarded as a $G$-equivariant open embedding of~$O$.
Given a spherical homogeneous space $G/H$, the Luna--Vust theory (see~\cite{LV} and also~\cite{Kn91}) provides a combinatorial description of all (normal) $G$-equivariant open embeddings of $G/H$ based on the following three combinatorial invariants of~$G/H$: the \textit{weight lattice} $\Lambda_G(G/H)$, the finite set $\Sigma_G(G/H)$ of \textit{spherical roots}, and the finite set $\mathcal D_G(G/H)$ of \textit{colors}.
Moreover, all such embeddings are parameterized by combinatorial objects called \textit{colored fans}, which generalize usual fans used for classifying toric varieties.

It turns out that the three above-mentioned invariants also play a crucial role in the classification of spherical homogeneous spaces themselves.
Namely, extending the datum of $\Lambda_G(G/H)$, $\Sigma_G(G/H)$, $\mathcal D_G(G/H)$ with the parabolic subgroup $P_G(G/H) \subset G$ defined as the stabilizer of the open $B$-orbit in~$G/H$ one obtains a quadruple that uniquely determines $G/H$ up to a $G$-equivariant isomorphism.
Moreover, determining relations between the four invariants one obtains a complete classification of spherical homogeneous spaces in terms of combinatorial objects called \textit{homogeneous spherical data}.
This approach was initiated in~\cite{Lu01} and subsequently completed by a joint effort of several researchers; see~\cite{Lo1,BP14,BP15,BP16}.

In view of the importance of the invariants $\Lambda_G(G/H)$, $\Sigma_G(G/H)$, $\mathcal D_G(G/H)$ in the classification of spherical varieties, a natural problem is to compute them starting from an explicit form of a spherical subgroup~$H \subset G$.
A standard way of specifying a subgroup $H$ is to use a regular embedding $H \subset P$ in a parabolic subgroup~$P \subset G$, where ``regular'' means the inclusion $H_u \subset P_u$ of the unipotent radicals of $H$ and~$P$, respectively.
If $L$ is a Levi subgroup of~$P$, then up to conjugacy we may also assume that $K = H \cap L$ is a Levi subgroup of~$H$.
In this setting, it was shown in~\cite[\S\,2.3]{Avd_SSSS} that the computation of all the four above-mentioned invariants reduces to computing~$\Sigma_G(G/H)$ and one more invariant~$\widehat \Lambda^+_G(G/H)$, called the \textit{extended weight monoid}, which is closely related to the colors.
Further, the results of~\cite{Avd_EWM} show that the computation of $\widehat \Lambda^+_G(G/H)$ can be reduced to that of~$\Sigma_G(G/H)$.
Consequently, computing all the invariants of spherical homogeneous spaces reduces to computing the spherical roots.

As for computing the set $\Sigma_G(G/H)$, explicit solutions for this problem are known for the cases where $H$ is reductive (see~\cite{BP15}) or strongly solvable, that is, $H$ is contained in a Borel subgroup of~$G$ (see~\cite{Avd_SSSS}).
Apart from that, a general strategy for computing $\Sigma_G(G/H)$ was proposed in~\cite{Avd_DSS}.
This strategy is based on finding one-parameter degenerations of the Lie algebra $\Lie H$ in the Lie algebra $\Lie G$ having special properties.
The strategy was also implemented for subgroups $H$ regularly embedded in a parabolic subgroup $P \subset G$ such that Levi subgroups $K \subset H$ and $L \subset P$ satisfy $L' \subset K \subset L$, where $L'$ is the derived subgroup of~$L$.
The result of this implementation is a recursive algorithm that reduces the computation of $\Sigma_G(G/H)$ to the same problem for two other spherical subgroups $N_1,N_2 \subset G$ after performing two degenerations of $\Lie H$ in~$\Lie G$.
These subgroups satisfy $|\Sigma_G(G/N_1)| = |\Sigma_G(G/N_2)| = |\Sigma_G(G/H)| - 1$ and $\Sigma_G(G/H) = \Sigma_G(G/N_1) \cup \Sigma_G(G/N_2)$.
We call this algorithm the \textit{base algorithm}; in general it has exponential complexity depending on the value $r = |\Sigma_G(G/H)|$.
In addition, for groups $H$ satisfying $K = L$ a significantly faster algorithm was developed in the same paper~\cite{Avd_DSS}; we call it the \textit{optimized algorithm}.
This algorithm, which conjecturally has linear complexity, reduces the computation of $\Sigma_G(G/H)$ to the same problem for a finite number of new spherical subgroups~$H_1,\ldots,H_m$ such that for each $i = 1,\ldots, m$ the subgroup $H_i$ is standardly embedded in a parabolic subgroup $P_i \subset G$, the groups $H_i,P_i$ have a common Levi subgroup~$L_i$, and the quotient of Lie algebras $\Lie P_i / \Lie H_i$ is a strictly indecomposable spherical $L_i$-module (see the definition in~\S\,\ref{subsec_SM_gen}).
It is known that such modules can have no more than two simple summands, and all the cases with exactly one simple summand were also classified in~\cite{Avd_DSS}.
Moreover, it turned out that the sets of spherical roots for all such cases were already known.
We reproduce this classification in Theorem~\ref{thm_par1psi1}.

In this paper, we classify all spherical subgroups $H$ satisfying $K = L$ and such that the $L$-module $\Lie P / \Lie H$ is strictly indecomposable and has exactly two simple components.
Moreover, for all such~$H$ we compute the corresponding sets of spherical roots.
These results are stated in Theorems~\ref{thm_par1psi2} and~\ref{thm_par2psi2}.
We remark that in the cases where $G$ is an exceptional simple group of type~$\mathsf F_4$, $\mathsf E_6$, $\mathsf E_7$, or~$\mathsf E_8$, the results are obtained using computer calculations.
In view of the discussion in the previous paragraph, our results enable one to compute the set $\Sigma_G(G/H)$ for an arbitrary spherical subgroup~$H$ satisfying $K = L$ (without the restriction on $\Lie P / \Lie H$) using only the optimized algorithm without additional calculations.

This paper is organized as follows.
In \S\,\ref{sect_prelim} we fix notation and conventions, discuss some general results, and recall all facts on spherical varieties needed in this paper.
In \S\,\ref{sect_active_C-roots} we discuss in more detail the class of spherical subgroups considered in this paper, recall the construction of degenerations for them, and reproduce the base algorithm along with the optimized algorithm.
In \S\,\ref{sect_comp_SR} we state and prove the main results of this paper.

%\subsection*{Acknowledgements}

\section{Preliminaries}
\label{sect_prelim}

\subsection{Notation and conventions}

Throughout this paper, we work over an algebraically closed field $\KK$ of characteristic zero.
All topological terms relate to the Zariski topology.
All subgroups of algebraic groups are assumed to be algebraic.
The Lie algebras of algebraic groups denoted by capital Latin letters are denoted by the corresponding small Gothic letters.
A~\textit{$K$-variety} is an algebraic variety equipped with a regular action of an algebraic group~$K$.

$\ZZ_{\ge0} = \lbrace z \in \ZZ \mid z \ge 0 \rbrace$;

$\KK^\times$ is the multiplicative group of the field~$\KK$;

$\GG_a$ is the additive group of the field~$\KK$;

$|X|$ is the cardinality of a finite set~$X$;

$\langle v_1,\ldots, v_k \rangle$ is the linear span of vectors $v_1,\ldots, v_k$ of a vector space~$V$;

$V^*$ is the space of linear functions on a vector space~$V$;

$\mathrm S^k(V)$ is the $k$th symmetric power of a vector space~$V$;

$\wedge^k(V)$ is the $k$th exterior power of a vector space~$V$;

$L^0$ is the connected component of the identity of an algebraic group~$L$;

$L'$ is the derived subgroup of a group~$L$;

$L_u$ is the unipotent radical of an algebraic group~$L$;

$Z(L)$ is the center of a group~$L$;

$\mathfrak X(L)$ is the character group (in additive notation) of an algebraic group~$L$;

$N_L(K)$ is the normalizer of a subgroup $K$ in a group~$L$;

$\KK[X]$ is the algebra of regular functions on an algebraic variety~$X$;

$\KK(X)$ is the field of rational functions on an irreducible algebraic variety~$X$;

$G$ is a connected reductive algebraic group;

$B \subset G$ is a fixed Borel subgroup;

$T \subset B$ is a fixed maximal torus;

$B^-$ is the Borel subgroup of~$G$ opposite to~$B$ with respect to~$T$, so that $B \cap B^- = T$;

$(\cdot\,,\,\cdot)$ is a fixed inner product on~$\QQ\mathfrak X(T)$ invariant with respect to the Weyl group $N_G(T)/T$;

$\Delta \subset \mathfrak X(T)$ is the root system of~$G$ with respect to~$T$;

$\Delta^+ \subset \Delta$ is the set of positive roots with respect to~$B$;

$\Pi \subset \Delta^+$ is the set of simple roots with respect to~$B$;

$\alpha^\vee \in \Hom_\ZZ(\mathfrak X(T), \ZZ)$ is the coroot corresponding to a root $\alpha \in \Delta$;

$h_\alpha \in \mathfrak t$ is the image of $\alpha^\vee$ in $\mathfrak t$ under the chain $\Hom_\ZZ(\mathfrak X(T), \ZZ) \hookrightarrow (\mathfrak t^*)^* \xrightarrow{\sim} \mathfrak t$;

$\mathfrak g_\alpha \subset \mathfrak g$ is the root subspace corresponding to a root~$\alpha \in \Delta$;

$e_\alpha \in \mathfrak g_\alpha$ is a fixed nonzero element.

The simple roots and fundamental weights of simple algebraic groups are numbered as in~\cite{Bo}.

For every $\beta = \sum \limits_{\alpha \in \Pi} k_\alpha \alpha \in \ZZ_{\ge0}\Pi$, we define its \textit{support} $\Supp \beta = \lbrace \alpha \in \Pi \mid k_\alpha > 0 \rbrace$ and \textit{height} $\operatorname{ht} \beta = \sum \limits_{\alpha \in \Pi} k_\alpha$.

We fix a nondegenerate $G$-invariant inner product on~$\mathfrak g$ and for every subspace $\mathfrak u \subset \mathfrak g$ let $\mathfrak u^\perp$ be the orthogonal complement of $\mathfrak u$ in~$\mathfrak g$ with respect to this product.

The groups $\mathfrak X(B)$ and $\mathfrak X(T)$ are identified via restricting characters from~$B$ to~$T$.

Given a parabolic subgroup $P \subset G$ such that $P \supset B$ or $P \supset B^-$, the unique Levi subgroup $L$ of $P$ containing~$T$ is called the \textit{standard Levi subgroup} of~$P$.
By abuse of language, in this situation we also say that $L$ is a standard Levi subgroup of~$G$.
The unique parabolic subgroup $Q$ of $G$ such that $\mathfrak p + \mathfrak q = \mathfrak g$ and $P \cap Q = L$ is said to be \textit{opposite} to~$P$.

Let $L \subset G$ be a standard Levi subgroup.
We put $B_L = B \cap L$, so that $B_L$ is a Borel subgroup of~$L$.
If $V$ is a simple $L$-module, by a highest (resp. lowest) weight vector of~$V$ we mean a $B_L$-semiinvariant (resp. $(B^- \cap L)$-semiinvariant) vector in~$V$.
These conventions on~$V$ are also valid if $L = G$.

Given a standard Levi subgroup $L \subset G$, we let $\Delta_L \subset \Delta$ be the root system of~$L$ and put $\Delta^+_L = \Delta^+ \cap \Delta_L$ and $\Pi_L = \Pi \cap \Delta_L$, so that $\Delta^+_L$ (resp. $\Pi_L$) is the set of all positive (resp. simple) roots of~$L$ with respect to the Borel subgroup~$B_L$.

Let $K$ be a group and let $K_1,K_2$ be subgroups of~$K$.
We write $K = K_1 \rightthreetimes K_2$ if $K$ is a semidirect product of~$K_1,K_2$ with $K_2$ being a normal subgroup of~$K$.

\subsection{Levi roots and their properties}
\label{subsec_Levi_roots}

Let $L$ be a standard Levi subgroup of~$G$ and let $C$ be the connected center of~$L$.
We consider the natural restriction map $\varepsilon \colon \mathfrak X(T) \to \mathfrak X(C)$ and extend it to the corresponding map $\varepsilon_\QQ \colon \QQ\mathfrak X(T) \to \QQ\mathfrak X(C)$.
Let $(\Ker \varepsilon_\QQ)^\perp \subset \QQ\mathfrak X(T)$ be the orthogonal complement of $\Ker \varepsilon_\QQ$ with respect to the inner product $(\cdot\,, \cdot)$.
Under the map $\varepsilon_\QQ$, the subspace $(\Ker \varepsilon_\QQ)^\perp$ maps isomorphically to $\QQ\mathfrak X(C)$; we equip $\QQ\mathfrak X(C)$ with the inner product transferred from $(\Ker \varepsilon_\QQ)^\perp$ via this isomorphism.

Consider the adjoint action of~$C$ on~$\mathfrak g$.
For every $\lambda \in \mathfrak X(C)$, let $\mathfrak g(\lambda) \subset \mathfrak g$ be the corresponding weight subspace of weight~$\lambda$.
It is well known that $\mathfrak g(0) = \mathfrak l$.
We put
\[
\Phi = \lbrace \lambda \in \mathfrak X(C) \setminus \lbrace 0 \rbrace \mid \mathfrak g(\lambda) \ne \lbrace 0 \rbrace \rbrace.
\]
Then there is the following decomposition of $\mathfrak g$ into a direct sum of $C$-weight subspaces:
\begin{equation} \label{eqn_decomposition}
\mathfrak g = \mathfrak l \oplus \bigoplus\limits_{\lambda \in \Phi} \mathfrak g(\lambda).
\end{equation}
In what follows, elements of the set $\Phi$ will be referred to as \textit{$C$-roots}.
It is easy to see that $\Phi = \varepsilon(\Delta \setminus \Delta_L)$.
In particular, $\Phi = - \Phi$.

Now consider the adjoint action of~$L$ on~$\mathfrak g$.
Then each $C$-weight subspace of $\mathfrak g$ becomes an $L$-module in a natural way.
The following proposition is well known.

\begin{proposition} \label{prop_properties_of_g(lambda)}
The following assertions hold.
\begin{enumerate}[label=\textup{(\alph*)},ref=\textup{\alph*}]
\item \label{prop_properties_of_g(lambda)_a}
\textup(see \cite[Theorem~1.9]{Ko} or \cite[Ch.~3, Lemma~3.9]{GOV}\textup)
For every $\lambda \in \Phi$, the subspace $\mathfrak g(\lambda)$ is a simple $L$-module.

\item \label{prop_properties_of_g(lambda)_b}
\textup(see \cite[Lemma~2.1]{Ko}\textup)
For every $\lambda, \mu, \nu \in \Phi$ with $\lambda = \mu + \nu$ one has $\mathfrak g(\lambda) = [\mathfrak g(\mu), \mathfrak g(\nu)]$.

\item \label{prop_properties_of_g(lambda)_c}
\textup(see \cite[Proposition~2.1(c)]{Avd_DSS}\textup)
For every $\lambda \in \Phi$ there is an $L$-module isomorphism $\mathfrak g(-\lambda) \simeq \mathfrak g(\lambda)^*$.
\end{enumerate}
\end{proposition}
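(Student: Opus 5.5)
We prove the three assertions in turn, using the root space decomposition $\mathfrak g = \mathfrak t \oplus \bigoplus_{\alpha \in \Delta} \mathfrak g_\alpha$ together with the identification $\mathfrak g(\lambda) = \bigoplus_{\alpha \in \Delta \setminus \Delta_L,\ \varepsilon(\alpha) = \lambda} \mathfrak g_\alpha$. Put $\Phi_\lambda = \varepsilon^{-1}(\lambda) \cap \Delta$. Since $\Ker \varepsilon$ restricted to $\ZZ\Delta$ is spanned by $\Pi_L$, the set $\Phi_\lambda$ consists of the roots $\alpha$ whose coefficients at the simple roots in $\Pi \setminus \Pi_L$ are fixed by~$\lambda$. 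We may assume $\lambda$ is positive, i.e.\ these coefficients are nonnegative; the negative case follows by applying the Chevalley involution, or by (c).

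For (a), an $L$-submodule of $\mathfrak g(\lambda)$ is $T$-stable, so it is a sum of root spaces. Simplicity therefore amounts to two claims: $\mathfrak g(\lambda)$ contains a unique $B_L$-highest weight line, and this line generates. Equivalently, $\Phi_\lambda$ should have a unique maximal element with respect to adding roots of $\Pi_L$, and the set should be connected under the operations $\alpha \mapsto \alpha \pm \beta$ with $\beta \in \Pi_L$. The key step is the following. Suppose $\alpha, \gamma \in \Phi_\lambda$ are distinct. Then $\alpha - \gamma \in \ZZ\Pi_L$. Using the standard lemma that if $\alpha - \gamma$ is a nonzero combination of simple roots then some $\beta \in \Pi_L$ has $(\alpha - \gamma, \beta) > 0$ or $< 0$ appropriately, we obtain $\alpha - \beta \in \Delta$ or $\gamma + \beta \in \Delta$. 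An induction on $\operatorname{ht}$ of $\alpha - \gamma$ (splitting into positive and negative parts) then connects any two elements through the lowest and highest ones. This is Kostant's argument, and it is the main obstacle. Alternatively, we can cite \cite{Ko} directly, as the statement does.

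For (b), clearly $[\mathfrak g(\mu), \mathfrak g(\nu)] \subset \mathfrak g(\mu + \nu)$, and the bracket is an $L$-submodule. By (a) it suffices to show that it is nonzero. To see this, we pick roots $\alpha \in \Phi_\mu$ and $\beta \in \Phi_\nu$ with $\alpha + \beta \in \Delta$, since then $[e_\alpha, e_\beta] \ne 0$. We find them as follows. Take any $\gamma \in \Phi_\lambda$, written as a sum of simple roots ordered so that all partial sums are roots. Then partial sums pass through every level, but not necessarily through level $\mu$ exactly. A cleaner route is to use $\mathfrak g(\lambda) \subset [\mathfrak g(\mu), \mathfrak g(\nu)] + \dots$, via the fact that $\bigoplus_{\lambda} \mathfrak g(\lambda)$ over the positive $C$-roots is generated by the degree-one pieces; we would instead argue by an invariant-form argument. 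Suppose the bracket were zero. Then $\mathfrak g(\nu)$ is orthogonal to $[\mathfrak g(-\mu), \mathfrak g(\lambda)]$, which lies in $\mathfrak g(\nu)$ and is nonzero because $\mathfrak g(-\mu)$ acts nontrivially on the sum of the $\mathfrak g(\lambda + k\mu)$ by $\mathfrak{sl}_2$-type arguments. This contradicts the nondegenerate pairing between $\mathfrak g(\nu)$ and $\mathfrak g(-\nu)$. We expect this step to require care with the details.

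For (c), the invariant form pairs $\mathfrak g_\alpha$ nondegenerately with $\mathfrak g_{-\alpha}$ and kills $\mathfrak g_\alpha \times \mathfrak g_\beta$ when $\alpha + \beta \ne 0$. Hence it restricts to an $L$-invariant nondegenerate pairing $\mathfrak g(\lambda) \times \mathfrak g(-\lambda) \to \KK$, which yields the isomorphism $\mathfrak g(-\lambda) \simeq \mathfrak g(\lambda)^*$.
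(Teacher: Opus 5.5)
You have a genuine gap in part (b); parts (a) and (c) are sound. The paper gives no argument of its own here and only cites the literature (Kostant for (b)), so your proof has to stand on its own.

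\textbf{Part (a).} Write $\alpha-\gamma=\theta_+-\theta_-$. Your induction on $\operatorname{ht}\theta_+ + \operatorname{ht}\theta_-$ does show that $\Phi_\lambda$ is connected under the moves $\pm\beta$, $\beta\in\Pi_L$. Connectedness alone already gives simplicity: the root set of a $T$-stable submodule is closed under these moves, because $[e_{\pm\beta},e_\alpha]\ne0$ whenever $\alpha\pm\beta\in\Delta$. The separate ``unique maximal element'' claim and the routing ``through the lowest and highest ones'' are not needed.

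\textbf{Part (b): the gap.}
\begin{itemize}
\item As written, your contradiction is vacuous. Since $\nu\ne0$, $\mathfrak g(\nu)$ is orthogonal to all of $\mathfrak g(\nu)$, so its orthogonality to $[\mathfrak g(-\mu),\mathfrak g(\lambda)]\subset\mathfrak g(\nu)$ contradicts nothing.
\item The correct use of invariance is $([x,y],z)=-(y,[x,z])$ with $x\in\mathfrak g(\mu)$, $y\in\mathfrak g(\nu)$, $z\in\mathfrak g(-\lambda)$. It only shows that $[\mathfrak g(\mu),\mathfrak g(\nu)]=0$ if and only if $[\mathfrak g(\mu),\mathfrak g(-\lambda)]=0$. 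That trades $\lambda=\mu+\nu$ for $-\nu=\mu+(-\lambda)$, which is another instance of the very statement being proved.
\item You attribute the remaining nonvanishing to ``$\mathfrak{sl}_2$-type arguments''. But no $\mathfrak{sl}_2$-triple is attached to a $C$-root: $\mathfrak g(\mu)$ is not one-dimensional and $\Phi$ is not a root system. Even nontrivial action of $\mathfrak g(-\mu)$ on $\bigoplus_k\mathfrak g(\lambda+k\mu)$ would not force this particular bracket to be nonzero.
\end{itemize}
So the essential point is never established: that there exist $\alpha\in\Phi_\mu$ and $\beta\in\Phi_\nu$ with $\alpha+\beta\in\Delta$.

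\textbf{How to close it.} Use the inner product on $\QQ\mathfrak X(C)$ defined just before the proposition.
\begin{itemize}
\item Since $\lambda\ne0$, we have $0<(\lambda,\lambda)=(\lambda,\mu)+(\lambda,\nu)$. After swapping $\mu$ and $\nu$ if necessary, assume $(\lambda,\mu)>0$.
\item The vector $s=\sum_{\alpha\in\Phi_\mu}\alpha$ is invariant under the Weyl group of $L$, which preserves $C$-weights. Hence $s$ is orthogonal to $\Pi_L$.
\item Since $\Ker\varepsilon_\QQ=\QQ\Pi_L$, this means $s\in(\Ker\varepsilon_\QQ)^\perp$, with $\varepsilon_\QQ(s)=|\Phi_\mu|\mu$.
\item Therefore, for every $\gamma\in\Phi_\lambda$, we get $\sum_{\alpha\in\Phi_\mu}(\gamma,\alpha)=(\gamma,s)=|\Phi_\mu|(\lambda,\mu)>0$. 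So $(\gamma,\alpha)>0$ for some $\alpha\in\Phi_\mu$.
\item Since $\gamma\ne\pm\alpha$, $\gamma-\alpha$ is a root. Its $C$-weight is $\nu$, so $\gamma-\alpha\in\Phi_\nu$, and $[e_\alpha,e_{\gamma-\alpha}]$ spans $\mathfrak g_\gamma$.
\end{itemize}
Thus $\mathfrak g_\gamma\subset[\mathfrak g(\mu),\mathfrak g(\nu)]$ for every $\gamma\in\Phi_\lambda$. This proves (b) directly, without even invoking (a).
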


\subsection{Additive degenerations of subspaces in simple \texorpdfstring{$\mathfrak{sl}_2$}{sl\_2}-modules}
\label{subsec_degen_subsp}

Consider the Lie algebra $\mathfrak{sl}_2$ with standard basis $\lbrace e,h,f\rbrace$, so that $[e,f] = h$, $[h,e] = 2e$, and $[h,f] = -2f$.
Let $V$ be a simple $\mathfrak{sl}_2$-module with highest weight $p \in \ZZ_{\ge0}$.
Fix a basis $\lbrace v_{p-2i} \mid i=0,\ldots,p \rbrace$ of $V$ such that $h \cdot v_{p-2i} = (p-2i)v_{p-2i}$ for all $i=0,\ldots, p$, $f \cdot v_{p-2i} = v_{p-2i-2}$ for all $i = 0,\ldots, p-1$, and $f \cdot v_{-p} = 0$.

Consider the one-parameter unipotent subgroup $\phi \colon \GG_a \to \GL(V)$ given by $\phi(t) = \exp (tf)$, let $U \subset V$ be a subspace with $\dim U = k$, and regard $U$ as a point in the Grassmannian of $k$-dimensional subspaces of~$V$.

\begin{proposition}[{\cite[Proposition~2.5]{Avd_DSS}}] \label{prop_limitII_prelim}
There exists the limit $\lim \limits_{t \to \infty}\phi(t) U = U_\infty$.
Moreover, $U_\infty = \langle v_{-p+2i} \mid i=0,\ldots, k-1 \rangle$.
\end{proposition}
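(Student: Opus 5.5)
Since $\phi(t)=\exp(tf)$ and $f$ lowers weights by~$2$, the plan is to compute $\phi(t)U$ explicitly in a basis adapted to the weight filtration and then read off the limit in Plücker coordinates. Put $W_j=\langle v_{-p},v_{-p+2},\ldots,v_{-p+2j-2}\rangle$ for $j=0,\ldots,p+1$; these are the $f$-stable subspaces, and the claimed limit is $W_k$. For $v\in V$ write $v=\sum_i c_i v_{p-2i}$ and call $\min\{i \mid c_i\neq 0\}$ the \emph{top index} of~$v$. By Gaussian elimination, $U$ has a basis $u_1,\ldots,u_k$ with pairwise distinct top indices $i_1<\cdots<i_k$, each $u_j$ having leading coefficient~$1$ at $v_{p-2i_j}$.

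Next I compute $\phi(t)u_j=\sum_{m\ge0}\frac{t^m}{m!}f^m u_j$. Since $f^m v_{p-2i}=v_{p-2i-2m}$ (zero once $i+m>p$), the coefficient of $v_{p-2l}$ in $\phi(t)u_j$ is the polynomial $\sum_{i\le l}c_{j,i}\,t^{l-i}/(l-i)!$ in~$t$. Its degree is exactly $l-i_j$ for $l\ge i_j$, with leading coefficient $1/(l-i_j)!$, and it vanishes for $l<i_j$. So in the basis $v_p,\ldots,v_{-p}$ the $k\times(p+1)$ coordinate matrix of $\phi(t)U$ has entries whose degree in $t$ grows toward the lowest-weight end.

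For the limit I use Plücker coordinates. The minor on columns $l_1<\cdots<l_k$ (indices $l$ meaning $v_{p-2l}$) is a polynomial in $t$ of degree at most $\sum_j(l_j-i_j)$. This bound is maximal, with value $D=\sum_{j=1}^k(p-k+j-i_j)$, exactly for the columns $l_j=p-k+j$, i.e.\ the span of $v_{-p},\ldots,v_{-p+2k-2}$, which is $W_k$. Every other choice of columns has strictly smaller $\sum_j l_j$ and hence degree $<D$. It remains to show that the top coefficient of the minor on the extreme columns is nonzero. Collecting the $t^{D}$ terms, this coefficient equals $\det\bigl(1/(p-k+l-i_j)!\bigr)_{j,l}$, with the convention $1/n!=0$ for $n<0$.

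This determinant is the main obstacle. Up to the nonzero factor $\prod_l 1/(p-k+l-i_1)!$-type normalizations, it is a standard determinant of inverse factorials, and it is nonzero when the $i_j$ are distinct, as they are here. To prove this, I would factor $1/(a_l-i_j)!=\frac{1}{a_l!}\,a_l(a_l-1)\cdots(a_l-i_j+1)$, valid since $a_l=p-k+l\ge i_j$ because $i_k\le p$ and the $i_j$ are distinct. This turns the matrix into a diagonal matrix times a matrix of falling factorials $(a_l)_{i_j}$. That determinant is a generalized Vandermonde, nonzero because the $a_l$ are distinct nonnegative integers in a consecutive range and the falling factorials of distinct degrees are polynomials. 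Alternatively, one can argue directly by Gessel--Viennot or by column operations reducing to binomial determinants $\det\binom{a_l}{i_j}$, which are positive. Given this nonvanishing, dividing all Plücker coordinates by $t^{D}$ shows that $\phi(t)U\to W_k$ as $t\to\infty$, which proves both existence of the limit and the formula.
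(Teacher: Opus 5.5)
Your argument is correct in outline, but it takes a different route from the one behind the statement. The paper gives no proof here and only cites \cite{Avd_DSS}. The natural short proof is a fixed-point argument:
\begin{itemize}
\item The orbit map $t \mapsto \phi(t)U$ extends from $\GG_a$ to $\mathbb P^1$ because the Grassmannian is projective, so the limit exists.
\item Since $\phi(s)\phi(t)U = \phi(s+t)U$ and $t \mapsto t+s$ fixes $\infty$, the limit is $\phi(\GG_a)$-stable, hence $f$-stable.
\item $f$ acts on $V$ as a single nilpotent Jordan block, so its only invariant subspaces are $\ker f^j = \langle v_{-p},\ldots,v_{-p+2j-2}\rangle$.
\end{itemize}
This needs no determinant. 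In fact your own observation that all Pl\"ucker coordinates of $\phi(t)U$ are polynomials in $t$ already gives existence, and the fixed-point remark then identifies the limit. Your explicit computation gives more information: the exact leading degree $D$, the leading Pl\"ucker coordinate, and the rate of convergence. The price is a total-positivity fact about $\det\bigl(1/(a_l-i_j)!\bigr)$, and that step is the weak part of your write-up.

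Two of your justifications there do not hold as stated.
\begin{itemize}
\item The claim $a_l \ge i_j$ for all $j,l$ is false. For example, take $i_k = p$, $l = 1$, $k \ge 2$; then $a_1 = p-k+1 < p$. The factorization $1/(a_l-i_j)! = (a_l)_{i_j}/a_l!$ still survives, because $a_l \ge 0$ forces the falling factorial $(a_l)_{i_j}$ to vanish when $a_l < i_j$.
\item ``Falling factorials of distinct degrees evaluated at distinct points give a nonsingular matrix'' is not a valid principle. Degrees $0,2$ at points $0,1$ already give a zero row.
\end{itemize}
What actually makes the determinant nonzero is the condition $i_j \le a_j = p-k+j$ for all $j$, which follows from $i_k \le p$ and distinctness of the $i_j$. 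The correct justification is the alternative you mention. Write
\[
\det\bigl(1/(a_l-i_j)!\bigr) = \prod_l (a_l!)^{-1}\,\prod_j i_j!\,\det\binom{a_l}{i_j}.
\]
By Gessel--Viennot, a binomial determinant with strictly increasing $a$'s and $i$'s is positive exactly when $i_j \le a_j$ for every $j$. State this condition explicitly and drop the Vandermonde remark, and the proof is complete.
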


In \S\,\ref{subsec_degen_add}, we will apply Proposition~\ref{prop_limitII_prelim} in situations where the subspace $U$ is $h$-stable.
In this case, $U = \langle v_{n_0}\rangle \oplus \ldots \oplus \langle v_{n_{k-1}} \rangle$ for some $n_0 < \ldots < n_{k-1}$ and
\[
U_\infty = \langle v_{-p} \rangle \oplus \langle v_{-p+2} \rangle \oplus \ldots \oplus \langle v_{-p+2k-2} \rangle.
\]
For describing $U_\infty$ in our applications, it will be convenient for us to use the following terminology: for every $i = 0,\ldots,k-1$ we say that $\langle v_{n_i} \rangle$ \textit{shifts to} $\langle v_{-p+2i} \rangle$ under the degeneration.
Observe that $-p+2i \le n_i$ for all~$i$.

\subsection{Spherical varieties and some combinatorial invariants of them}
\label{subsec_comb_inv_SV}

Recall from the introduction that a $G$-variety $X$ is said to be spherical if it is irreducible, normal, and has an open orbit for the induced action of~$B$.
Recall also that a subgroup $H \subset G$ is said to be spherical if $G/H$ is a spherical homogeneous space.

Let $X$ be a spherical $G$-variety.
In this subsection, we introduce several combinatorial invariants of $X$ that will be needed in our paper.

For every $\lambda \in \mathfrak X(T)$ let $\KK(X)_\lambda^{(B)}$ be the space of $B$-semiinvariant rational functions on~$X$ of weight $\lambda$.
Then the \textit{weight lattice} of $X$ is by definition
\[
\Lambda_G(X) = \lbrace \lambda \in \mathfrak X(T) \mid \KK(X)_\lambda^{(B)} \ne \lbrace 0 \rbrace \rbrace.
\]
The \textit{rank} of~$X$ is defined as $\rk_G(X) = \rk \Lambda_G(X)$.
Since $B$ has an open orbit in~$X$, it follows that for every $\lambda \in \Lambda_G(X)$ the space $\KK(X)^{(B)}_\lambda$ has dimension~$1$ and hence is spanned by a nonzero function~$f_\lambda$.

Put $\mathcal Q_G(X) = \Hom_\ZZ(\Lambda_G(X), \QQ)$.

Every discrete $\QQ$-valued valuation $v$ of the field $\KK(X)$ vanishing on $\KK^\times$ determines an element $\rho_v \in \mathcal Q_G(X)$ such that $\rho_v(\lambda) = v(f_\lambda)$ for all $\lambda \in \Lambda_G(X)$.
It is known that the restriction of the map $v \mapsto \rho_v$ to the set of $G$-invariant discrete $\QQ$-valued valuations of $\KK(X)$ vanishing on~$\KK^\times$ is injective (see~\cite[7.4]{LV} or~\cite[Corollary~1.8]{Kn91}) and its image is a finitely generated cone containing the image in~$\mathcal Q_G(X)$ of the antidominant Weyl chamber (see \cite[Proposition~3.2 and Corollary~4.1,~i)]{BriP} or~\cite[Corollary~5.3]{Kn91}).
We denote this cone by~$\mathcal V_G(X)$.
Results of~\cite[\S\,3]{Bri90} imply that $\mathcal V_G(X)$ is a cosimplicial cone in~$\mathcal Q_G(X)$.
Consequently, there is a uniquely determined linearly independent set $\Sigma_G(X)$ of primitive elements in~$\Lambda_G(X)$ such that
\[
\mathcal V_G(X) = \lbrace q \in \mathcal Q_G(X) \mid q(\sigma) \le 0 \ \text{for all} \ \sigma \in \Sigma_G(X) \rbrace.
\]
Elements of $\Sigma_G(X)$ are called \textit{spherical roots} of~$X$ and $\mathcal V_G(X)$ is called the \textit{valuation cone} of~$X$.
The above discussion implies that every spherical root is a nonnegative linear combination of simple roots.

In this paper, we will need the following important property.

\begin{proposition}[{see~\cite[Corollary~5.3]{BriP}}] \label{prop_finite_index_in_norm}
Let $H \subset G$ be a spherical subgroup.
The set $\Sigma_G(G/H)$ is a basis of the vector space $\QQ\Lambda_G(G/H)$ if and only if the group $N_G(H)/H$ is finite.
\end{proposition}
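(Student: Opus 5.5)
The plan is to reduce the statement to two standard facts: the structure of the normalizer $N_G(H)$ of a spherical subgroup, and the description of $\mathcal V_G(G/H)$ in terms of $G$-invariant valuations. Put $X = G/H$ and $N = N_G(H)$. The group $N/H$ acts on $X$ by $G$-equivariant automorphisms, $nH \colon gH \mapsto gn^{-1}H$. This action commutes with $B$ and therefore preserves each line $\KK(X)^{(B)}_\lambda = \langle f_\lambda \rangle$. It follows that $N/H$ acts on $f_\lambda$ by a character $\chi_\lambda$, which gives a homomorphism $N/H \to \Hom(\Lambda_G(X), \KK^\times)$.

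This homomorphism is injective. If some $n$ fixed every $f_\lambda$, it would fix all $B$-invariant rational functions together with a separating family of $B$-semiinvariants. Since the open $B$-orbit in $X$ is determined by these, $n$ would act trivially on that orbit and hence on $X$, so $n \in H$. Consequently $N/H$ is diagonalizable, and it is finite if and only if its image is finite. This is the standard fact that $N/H$ embeds into the torus $\Hom(\Lambda_G(X),\KK^\times)$ (Knop).

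The second step is the key valuation-theoretic input, which I will cite from~\cite{BriP} or~\cite{Kn91}: the linear part $\mathcal V_G(X) \cap (-\mathcal V_G(X))$ of the valuation cone coincides, after tensoring with $\QQ$, with the image of the Lie algebra of $N/H$. More precisely, one-parameter subgroups $\gamma$ of the torus $(N/H)^0$ produce $G$-invariant valuations $v_\gamma$ with $\rho_{v_\gamma}(\lambda) = \langle \chi_\lambda, \gamma\rangle$. These valuations fill out exactly the maximal linear subspace of $\mathcal V_G(X)$. I expect this identification to be the main obstacle. The inclusion of the image into the linear part is elementary, but the reverse inclusion requires Knop's result that the linear part of $\mathcal V_G(X)$ is realized by the connected center of the automorphism group (the ``$\mathfrak a$'' in the decomposition). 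I would simply quote this.

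The conclusion is then linear algebra. Since $\mathcal V_G(X) = \lbrace q \mid q(\sigma) \le 0 \ \forall \sigma \in \Sigma_G(X) \rbrace$ with $\Sigma_G(X)$ linearly independent, its linear part is the annihilator of $\QQ\Sigma_G(X)$ in $\mathcal Q_G(X)$. This annihilator is zero if and only if $\Sigma_G(X)$ spans, i.e. is a basis of, $\QQ\Lambda_G(X)$. On the other hand, the linear part has dimension equal to $\dim (N/H)^0$, which vanishes exactly when $N/H$ is finite. Combining the two statements yields the equivalence.
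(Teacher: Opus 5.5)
Your route is essentially the paper's. The paper gives no argument of its own and only cites \cite[Corollary~5.3]{BriP}, which rests on the relation you quote between the linear part of $\mathcal V_G(G/H)$ and $N_G(H)/H$. Your linear algebra is correct and is exactly the step that turns ``$\mathcal V_G(X)$ is strictly convex'' into the statement here: the linear part of $\lbrace q \mid q(\sigma) \le 0 \ \text{for all}\ \sigma \in \Sigma_G(X) \rbrace$ is the annihilator of $\QQ\Sigma_G(X)$ in $\mathcal Q_G(X)$, and it vanishes if and only if the linearly independent set $\Sigma_G(X)$ spans $\QQ\Lambda_G(X)$.

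The one step to repair is your own justification that $N/H \to \Hom(\Lambda_G(X), \KK^\times)$ is injective; as written it does not work.
\begin{itemize}
\item On a spherical variety the $B$-invariant rational functions are constants, so that part of your argument gives nothing.
\item The $f_\lambda$ are $U$-invariant. By the local structure theorem the open $B$-orbit is $P_G(X)_u \times A$ with $A$ a torus of dimension $\rk_G(X)$, and the $f_\lambda$ separate only the $U$-orbits $P_G(X)_u \times \lbrace a \rbrace$, not points.
\end{itemize}
So knowing that $n$ fixes every $f_\lambda$ only tells you that $n$ preserves each of these $U$-orbits. Equivalently, if $eH$ lies in the open $B$-orbit, the coset $nH$ contains an element of $P_G(X)_u \cap N_G(H)$.

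Ruling this out needs an argument that really uses $G$-equivariance. For example, for quasi-affine $X$ the algebra $\KK[X]$ is generated as a $G$-module by its $B$-eigenvectors and separates points. Alternatively, simply rely on Knop's theorem, which you also cite.

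This point matters. Injectivity is what makes $(N/H)^0$ a torus whose cocharacters embed into $\mathcal Q_G(X)$. The implication ``$\Sigma_G(X)$ is a basis $\Rightarrow$ $N/H$ is finite'' rests on exactly that, while the converse rests on the hard inclusion you quote. With both facts taken from the literature, your argument is complete to the same extent as the paper's citation.
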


As can be easily seen from the definitions, the weight lattice and spherical roots depend only on the open $G$-orbit in~$X$.

\begin{remark} \label{remark_G/Z}
If a central subgroup $Z \subset G$ acts trivially on~$X$, then $X$ can be regarded as a spherical $G/Z$-variety.
In this case it is easy to see that the weight lattice and set of spherical roots of~$X$ as a spherical $G$-variety naturally identify with the those of~$X$ as a spherical $G/Z$-variety.
\end{remark}

\subsection{Spherical modules}
\label{subsec_SM_gen}

Given two connected reductive algebraic groups $G_1,G_2$, for $i=1,2$ let $V_i$ be a finite-dimensional $G_i$-module and let $\rho_i \colon G \to \GL(V_i)$ be the corresponding representation.
According to the terminology of Knop~\cite[\S\,5]{Kn98}, the pairs $(G_1,V_1)$ and $(G_2,V_2)$ are said to be \textit{geometrically equivalent} (or just \textit{equivalent} for short) if there exists an isomorphism of vector spaces $V_1 \xrightarrow{\sim} V_2$ identifying the groups $\rho_1(G_1)$ and $\rho_2(G_2)$.
As an important example, note that for any $G$-module $V$ the pairs $(G,V)$ and $(G,V^*)$ are equivalent.

In what follows, let $V$ be a finite-dimensional $G$-module.

Consider a decomposition $V = V_1 \oplus \ldots \oplus V_k$ into a direct sum of simple $G$-modules and let $Z$ be the subgroup of $\GL(V)$ consisting of the elements that act by scalar transformations on each~$V_i$, $i = 1,\ldots,k$.
Let $C \subset Z$ be the image in~$\GL(V)$ of the connected center of~$G$.
We say that $V$ is \textit{saturated} (as a $G$-module) if $C = Z$.
In the general case, one can find a subtorus $C_0 \subset Z$ such that $Z = C \times C_0$, and then $V$ becomes saturated when regarded as a ($G \times C_0$)-module.
The $(G \times C_0)$-module $V$ is called the \textit{saturation} of the $G$-module~$V$.
Note that the pair $(G \times C_0, V)$ is equivalent to $(G' \times Z, V)$.
Up to equivalence, an arbitrary module is obtained from a saturated one by reducing the connected center of the acting group.

We say that $V$ is a \textit{spherical $G$-module} if $V$ is spherical as a $G$-variety.
According to~\cite[Theorem~2]{VK78}, $V$ is spherical if and only if the $G$-module $\KK[V]$ is multiplicity free.
From the latter property (or directly from the definition) it is easily deduced that every submodule of a spherical $G$-module is again spherical and $V$ is spherical if and only if so is~$V^*$.
Observe that the property of $V$ being spherical depends only on the equivalence class of the pair~$(G,V)$.
As follows from~\cite[Proposition~3.5(c)]{Avd_DSS}, passing to the saturation preserves the rank of a spherical module.

A complete classification of simple spherical modules was obtained in \cite{Kac}.
Before discussing the classification of nonsimple spherical modules, we need to introduce several additional notions.

We say that $V$ is \textit{decomposable} if there exist connected reductive algebraic groups $G_1, G_2$, a $G_1$-module~$V_1$, and a $G_2$-module~$V_2$ such that the pair $(G,V)$ is equivalent to $(G_1 \times G_2, V_1 \oplus V_2)$.
Clearly, in this situation $V$ is a spherical $G$-module if and only if $V_i$ is a spherical $G_i$-module for $i=1,2$, in which case one has $\Lambda_G(V) \simeq \Lambda_{G_1}(V_1) \oplus \Lambda_{G_2}(V_2)$.
We say that $V$ is \textit{indecomposable} if $V$ is not decomposable and $V$ is \textit{strictly indecomposable} if the saturation of~$V$ is indecomposable.

A complete classification (up to equivalence) of all strictly indecomposable nonsimple spherical modules was independently obtained in~\cite{BR} and~\cite{Lea}.
A property that is crucial for the present paper is that every such module is the direct sum of at most two simple modules.
Both papers~\cite{BR} and~\cite{Lea} contain also a description of all spherical modules with a given saturation, which completes the classification of all spherical modules.
A complete list (up to equivalence) of all indecomposable saturated spherical modules can be found in~\cite[\S\,5]{Kn98} along with various additional data.
Among these data, we will need in this paper the values of the rank.

\subsection{A reduction for spherical modules}
\label{subsec_char_of_SM}

Let $V$ be a finite-dimensional $G$-module (not necessarily simple) and let $\omega$ be a highest weight of~$V$.
Fix a highest-weight vector $v_\omega \in V$ of weight~$\omega$.
Put $Q = \lbrace g \in G \mid g\langle v_\omega \rangle = \langle v_\omega \rangle \rbrace$; this is a parabolic subgroup of~$G$ containing~$B$.
Let $M$ be the standard Levi subgroup of~$Q$ and let $M_0$ be the stabilizer of~$v_\omega$ in~$M$.
Let $Q^- \supset B^-$ be the parabolic subgroup of~$G$ opposite to $Q$.
Fix a lowest weight vector $\xi \in V^*$ of weight~$-\omega$, so that $\xi(v_\omega) \ne 0$.
Put
\[
\widetilde V = \lbrace v \in V \mid (\mathfrak q_u \xi)(v) = 0 \rbrace = \lbrace v \in V \mid \xi(\mathfrak q_u v) = 0 \rbrace
\]
and $V_0 = \widetilde V \cap \Ker \xi$.
Both $V$ and $V_0$ are $M$-modules in a natural way, and there are the decompositions $\widetilde V = \langle v_\omega \rangle \oplus V_0$ and $V = (\mathfrak q^-_u v_\omega) \oplus \widetilde V$ into direct sums of $M$-submodules.
The following result is extracted from the proof of \cite[Theorem~3.3]{Kn98}.

\begin{theorem} \label{thm_sph_modules_imp}
The following assertions hold:
\begin{enumerate}[label=\textup{(\alph*)},ref=\textup{\alph*}]
\item \label{thm_sph_modules_imp_a}
$V$ is a spherical $G$-module if and only if $\widetilde V$ is a spherical $M$-module.

\item \label{thm_sph_modules_imp_b}
Under the conditions of~\textup(\ref{thm_sph_modules_imp_a}\textup), one has $\Lambda_G(V^*) = \Lambda_M(\widetilde V^*)$.
\end{enumerate}
\end{theorem}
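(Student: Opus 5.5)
The proof will go through the local structure theorem, applied to the open $Q^-$-stable subset of $V^*$ cut out by $v_\omega$. Since $V$ is spherical if and only if $V^*$ is, I will work with $V^*$ throughout. The vector $\xi \in V^*$ is a lowest weight vector, so it spans a line stable under $Q^-$. Regard $v_\omega$ as a linear function on $V^*$ and set $U = \lbrace \eta \in V^* \mid \eta(v_\omega) \ne 0 \rbrace$. This $U$ is an open affine subset stable under $Q$; up to a scalar, $v_\omega$ is a $B$-semiinvariant regular function on $V^*$.

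The key step is to establish a $Q$-equivariant isomorphism $U \simeq Q_u \times (\xi + \widetilde V^{\,*}\text{-slice})$. Concretely, put $S = \lbrace \eta \in V^* \mid \eta(v_\omega)=1,\ \eta(\mathfrak q^-_u v_\omega)=0 \rbrace$, which is an affine space. Using the decomposition $V = (\mathfrak q^-_u v_\omega) \oplus \widetilde V$, it identifies with $\xi + \widetilde V^{\,*}$, where $\widetilde V^{\,*} = (\mathfrak q^-_u v_\omega)^\perp \cap \Ker(v_\omega)$ is dual to $V_0$. Moreover $S$ is $M$-stable, because $\mathfrak q^-_u v_\omega$ and $\langle v_\omega\rangle$ are $M$-stable. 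I will then show that the action map $Q_u \times \KK^\times \times S \to U$ is an isomorphism, the factor $\KK^\times$ accounting for the scaling $\eta(v_\omega)$. The argument is the standard one from Knop's proof. The map $Q_u \to \mathfrak q^-_u v_\omega{}^*$, sending $u$ to the restriction of $u\eta$ to $\mathfrak q^-_u v_\omega$, is an isomorphism for every $\eta$ with $\eta(v_\omega)\neq 0$. This holds because $\mathfrak q_u \to (\mathfrak q^-_u v_\omega)^*$, $x \mapsto (y \mapsto \eta(x y v_\omega))$, is nondegenerate, by the Killing-form pairing of $\mathfrak q_u$ with $\mathfrak q^-_u$ and the fact that $\mathfrak q^-_u \to \mathfrak q^-_u v_\omega$ is injective. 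Since $Q_u$ is unipotent, it suffices to combine this infinitesimal nondegeneracy with a triangular (filtration by $C$-weight) argument.

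Granting the isomorphism, $B$ has an open orbit in $V^*$ if and only if $B \cap M$ has an open orbit in $\KK^\times \times S \simeq \KK^\times \xi \times \widetilde V^{\,*}$. Note that $\KK^\times$ combined with the connected center of $M$ acts on the $\xi$-coordinate compatibly, and $Q_u \subset B$ acts freely. The space $\KK^\times\xi \times \widetilde V^{\,*}$ is an open subset of $(\widetilde V)^* = \langle\xi\rangle\oplus V_0^*$. Hence $B$ has an open orbit in $V^*$ if and only if $B_M$ has one in $\widetilde V^*$, which proves~(\ref{thm_sph_modules_imp_a}).

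For~(\ref{thm_sph_modules_imp_b}), rational $B$-semiinvariant functions on $V^*$ restrict to $U$, and on $U \simeq Q_u\times(\text{open in } \widetilde V^*)$ they are exactly the pullbacks of $B_M$-semiinvariants on $\widetilde V^*$, with the same $T$-weights. This gives $\Lambda_G(V^*) = \Lambda_M(\widetilde V^*)$. The main obstacle is the second step, namely the global isomorphism $U \cong Q_u \times \KK^\times\xi\times V_0^*$. I would handle it by checking that the orbit map is injective with bijective differential, and then applying Zariski's main theorem, or alternatively by an explicit inverse built by induction on the height filtration of $\mathfrak q_u$.
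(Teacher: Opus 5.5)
Your argument is correct and follows essentially the same route as the source the paper relies on. The paper gives no proof of its own and extracts the statement from Knop's proof of \cite[Theorem~3.3]{Kn98}, which likewise uses the local structure decomposition $\lbrace \eta \in V^* \mid \eta(v_\omega) \ne 0 \rbrace \simeq Q_u \times Z$, with $Z$ open in $(\mathfrak q^-_u v_\omega)^\perp \simeq \widetilde V^*$, and then compares $B$-orbits and $B$-semiinvariants with $B_M$-orbits and $B_M$-semiinvariants.

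Two small slips should be fixed. First, $S$ is only $M_0$-stable, since $M$ acts on $v_\omega$ via $\omega$; it is $\KK^\times S = Z$ that is $M$-stable, and that is the set you actually use. Second, the space you call $\widetilde V^{\,*}$ in $S = \xi + \ldots$ is really $V_0^*$, as your own later identification $(\widetilde V)^* = \langle \xi \rangle \oplus V_0^*$ shows.
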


The above theorem implies the following algorithm for determining the sphericity of an $L$-module~$V$ for a standard Levi subgroup~$L \subset G$; see~\cite[Theorem~3.3 and the paragraph preceding it]{Kn98}.
We denote by $\Omega_L$ the multiset (that is, multiplicities are allowed) of $T$-weights of~$V$.

\medskip

Algorithm~\newalg: \label{alg_sph_mod}

Input: a triple $(\Pi_L$, $\Delta^+_L$, $\Omega_L$)

Step~\step:
choose $\omega \in \Omega$ such that $(\omega + \Pi_L) \cap \Omega_L = \varnothing$;

Step~\step:
compute the set $\Pi_M = \lbrace \alpha \in \Pi_L \mid \langle \alpha^\vee, \omega \rangle = 0 \rbrace$;

Step~\step:
compute the sets $\Delta^+_M = \lbrace \alpha \in \Delta^+_L \mid \langle \alpha^\vee, \omega \rangle = 0 \rbrace$ and $\Delta^+_L \setminus \Delta^+_M = \lbrace \alpha \in \Delta^+_L \mid \langle \alpha^\vee, \omega \rangle > 0 \rbrace$;

Step~\step:
compute the set $\Omega_M = \Omega_L \setminus (\lbrace \omega \rbrace \cup \lbrace \omega - \alpha \mid \alpha \in \Delta^+_L \setminus \Delta^+_M \rbrace)$;

Step~\step:
if $\Omega_M = \varnothing$, then return $\lbrace \omega \rbrace$;

Step~\step:
if $\Omega_M \ne \varnothing$, then return $\lbrace \omega \rbrace \cup [\text{output of the algorithm for $(\Pi_M, \Delta^+_M, \Omega_M)$}]$.

\medskip

Any output of Algorithm~\ref{alg_sph_mod} is a multiset $\Theta$ of $T$-weights. 

\begin{proposition} \label{prop_sph_lin_ind}
$V$ is a spherical $L$-module if and only if $\Theta$ is linearly independent.
\end{proposition}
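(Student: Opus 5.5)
We argue by induction on $\dim V$, applying Theorem~\ref{thm_sph_modules_imp} at each step of Algorithm~\ref{alg_sph_mod}. The first step is to interpret the algorithm module-theoretically. The weight $\omega$ chosen in Step~A1 satisfies $(\omega+\Pi_L)\cap\Omega_L=\varnothing$, so it is a highest weight of $V$ and we may fix a highest weight vector $v_\omega$. Then $\Pi_M$ from Step~A2 is exactly the set of simple roots of the standard Levi subgroup $M$ of the stabilizer $Q$ of the line $\langle v_\omega\rangle$. The set $\{\omega-\alpha \mid \alpha\in\Delta^+_L\setminus\Delta^+_M\}$ is the weight set of $\mathfrak q^-_u v_\omega$. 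Indeed, $e_{-\alpha}v_\omega\neq0$ whenever $\langle\alpha^\vee,\omega\rangle>0$, and these vectors have pairwise distinct weights. Hence, using $V=(\mathfrak q^-_u v_\omega)\oplus\langle v_\omega\rangle\oplus V_0$, the multiset $\Omega_M$ of Step~A4 is precisely the multiset of $T$-weights of the $M$-module $V_0$.

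Next we apply Theorem~\ref{thm_sph_modules_imp}(\ref{thm_sph_modules_imp_a}): $V$ is spherical for $L$ if and only if $\widetilde V=\langle v_\omega\rangle\oplus V_0$ is spherical for $M$. To reduce this to a statement about $V_0$, we use the fact that $M$ acts on the line $\langle v_\omega\rangle$ by the character $\omega$ and that $M_0=\ker(\omega|_M)$ is the stabilizer of $v_\omega$. We claim that $\widetilde V$ is $M$-spherical if and only if $\omega$ is not a rational combination of the weights in a set $\Theta_0$ attached to $V_0$, and $V_0$ is spherical for $M$ (equivalently, for $M_0\cdot C$-type data). We would prove this using the multiplicity-free criterion of~\cite{VK78}, or more directly as follows. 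An open $B_M$-orbit in $\langle v_\omega\rangle\oplus V_0$ exists if and only if $B_M$ has an open orbit in $V_0$ and the generic stabilizer of that orbit acts nontrivially, via $\omega$, on the line. In terms of weight lattices, this says $\omega\notin\QQ\Lambda_M(V_0^*)$.

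To close the induction we need the linear-algebra counterpart, and we need it in a saturated form. The inductive claim should be strengthened to: $V$ is spherical if and only if $\Theta$ is linearly independent, in which case $\QQ\Theta=\QQ\Lambda_L(V^*)$. For the lattice statement we combine Theorem~\ref{thm_sph_modules_imp}(\ref{thm_sph_modules_imp_b}) with the splitting $\Lambda_M(\widetilde V^*)=\ZZ\omega\oplus\Lambda_M(V_0^*)$, which holds in the spherical case. The inductive step then reads: $\Theta=\{\omega\}\cup\Theta_0$ is linearly independent if and only if $\Theta_0$ is linearly independent and $\omega\notin\QQ\Theta_0$. By induction, $V_0$ is spherical if and only if $\Theta_0$ is linearly independent, and then $\QQ\Theta_0=\QQ\Lambda_M(V_0^*)$; together with the previous paragraph this gives the claim for $V$. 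The base case is Step~A5, where $V_0=0$: here $V$ is spherical if and only if $B_M$ has an open orbit on a line of weight $\omega\neq0$, and $\{\omega\}$ is linearly independent. If $\omega=0$ the module is not spherical unless it is trivial, which matches the dependence of $\{0\}$.

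The main obstacle is the middle step: the criterion for sphericity of $\langle v_\omega\rangle\oplus V_0$ over $M$ expressed via $\omega\notin\QQ\Lambda_M(V_0^*)$. One must check that the generic $B_M$-stabilizer on $V_0$ fails to act trivially on the line exactly under this condition. The stabilizer's character group is dual to $\mathfrak X(T)/\Lambda_M(V_0^*)$ up to finite index, so the condition is rational rather than integral, which is why linear independence over $\QQ$ is the right notion. A further point to handle is multiplicities in $\Omega_L$: if a weight occurs twice, the algorithm eventually outputs it twice, yielding dependence, consistent with non-sphericity.
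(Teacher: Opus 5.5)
Your proposal is correct and follows the route the paper intends: the paper gives no separate proof and obtains the criterion by iterating Theorem~\ref{thm_sph_modules_imp} (Knop's reduction). Your induction, with the strengthened hypothesis $\QQ\Theta=\QQ\Lambda_L(V^*)$ and the observation that $\langle v_\omega\rangle\oplus V_0$ is $M$-spherical exactly when $V_0$ is $M$-spherical and $\omega$ has infinite order on the generic $B_M$-stabilizer, is that iteration made explicit. Knop phrases the step via the stabilizer $M_0$ of $v_\omega$, whereas the paper and you keep $M$ and $T$. The one small slip is that the stabilizer argument literally gives $\omega\notin\QQ\Lambda_M(V_0)$ rather than $\omega\notin\QQ\Lambda_M(V_0^*)$; this is harmless, because $\omega$ is fixed by the Weyl group of $M$ and $\Lambda_M(V_0^*)=-w_0^M\Lambda_M(V_0)$ (alternatively, run the same argument on $\widetilde V^*$).
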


\subsection{Some results on spherical subgroups}
\label{subsec_gen_SS}

Given an arbitrary subgroup $H \subset G$,
by \cite[\S\,30.3]{Hum} there exists a parabolic subgroup $P \subset G$ such that $H \subset P$ and $H_u \subset P_u$.
In this situation, we say that $H$ is \textit{regularly embedded} in~$P$.
One can choose Levi subgroups $L \subset P$ and $K \subset H$ in such a way that $K \subset L$.
Then by \cite[Lemma~1.4]{Mon} there is a $K$-equivariant isomorphism $P_u/H_u \simeq \mathfrak p_u / \mathfrak h_u$.

Replacing $H$, $P$, and $L$ with conjugate subgroups, we may assume that $P \supset B^-$ and $L$ is the standard Levi subgroup of~$P$.

In this paper we will deal with subgroups $H$ satisfying $K=L$.
For such subgroups, there is the following result, which is implied by~\cite[Proposition~I.1]{Br87} and~\cite[Theorem~1.2]{Pa94}; see also~\cite[Theorem~9.4]{Tim}.

\begin{proposition} \label{prop_criterion_spherical}
Under the above notation and assumptions suppose in addition that $K = L$.
Then the following conditions are equivalent.
\begin{enumerate}[label=\textup{(\arabic*)},ref=\textup{\arabic*}]
\item
$H$ is a spherical subgroup of~$G$.

\item
$\mathfrak p_u/\mathfrak h_u$ is a spherical $L$-module.
\end{enumerate}
Moreover, if these conditions hold, then $\Lambda_G(G/H) = \Lambda_L(\mathfrak p_u/\mathfrak h_u)$ where the lattice $\Lambda_L(\mathfrak p_u/\mathfrak h_u)$ is taken with respect to~$B_L$.
\end{proposition}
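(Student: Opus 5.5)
The plan is to reduce everything to the open "big cell" determined by the parabolic subgroup opposite to~$P$, and then to identify $P/H$ with the $L$-module $\mathfrak p_u/\mathfrak h_u$.

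Let $Q \supset B$ be the parabolic subgroup opposite to~$P$, so that $Q \cap P = L$, $Q = L \rightthreetimes Q_u$, and $B = B_L \rightthreetimes Q_u$ with $T \subset B_L$. The multiplication map $Q_u \times P \to G$ is an isomorphism onto an open subset $Q_uP \subset G$; this is the big cell, and we use $P \supset B^-$ here. The subset $Q_uP$ is stable under left multiplication by $Q$, because $LQ_uP = Q_uLP = Q_uP$. Since $H \subset P$, the map $Q_u \times P/H \to G/H$, $(u,x) \mapsto ux$, is an isomorphism onto the open subset $Q_uP/H$. Under this isomorphism the action of $B = B_L Q_u$ becomes
\[
b\cdot(u,x) = (bub^{-1}, bx) \quad\text{for } b \in B_L, \qquad u'\cdot(u,x) = (u'u,x) \quad\text{for } u' \in Q_u .
\]

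\textbf{Sphericity.} Since $Q_u$ acts simply transitively on the first factor, the $B$-orbits in $Q_uP/H$ are exactly the sets $Q_u \times O$, where $O$ runs over the $B_L$-orbits in $P/H$. Hence $B$ has an open orbit in $G/H$ if and only if $B_L$ has an open orbit in $P/H$.

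\textbf{Identifying $P/H$.} Because $K = L$, we have $P = L \rightthreetimes P_u$ and $H = L \rightthreetimes H_u$. The map $P_u \to P/H$, $p \mapsto pH$, induces an isomorphism $P_u/H_u \xrightarrow{\sim} P/H$. For $l \in L$ we have $l p H = (lpl^{-1})H$, so this isomorphism is $L$-equivariant when $L$ acts on $P_u/H_u$ by conjugation. By \cite[Lemma~1.4]{Mon}, $P_u/H_u \simeq \mathfrak p_u/\mathfrak h_u$ $L$-equivariantly. Therefore $H$ is spherical if and only if the $L$-module $\mathfrak p_u/\mathfrak h_u$ is spherical. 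Irreducibility and normality hold automatically in both settings, as both are smooth homogeneous varieties or vector spaces.

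\textbf{Weight lattice.} A $B$-semiinvariant rational function $f$ on $G/H$ is $Q_u$-invariant, because $Q_u$ is unipotent and has no nontrivial characters. Restricted to $Q_u \times P/H$, $f$ is then a function of $x$ alone, that is, the pullback of a rational function $f_0$ on $P/H$. The function $f_0$ is $B_L$-semiinvariant of the same $T$-weight, since $T \subset B_L$ and the $B_L$-action on the second factor is the given one. Conversely, any $B_L$-semiinvariant $f_0$ on $P/H$ pulls back via the projection to a $B$-semiinvariant function on the open set $Q_uP/H$, hence to one on $G/H$. This gives $\Lambda_G(G/H) = \Lambda_L(P/H) = \Lambda_L(\mathfrak p_u/\mathfrak h_u)$, the latter lattice being taken with respect to~$B_L$.

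The only delicate point is the equivariance bookkeeping on the big cell, namely that the $B_L$-action is twisted by conjugation on $Q_u$. It matters only through the freeness of the $Q_u$-action, so I do not expect a real obstacle.
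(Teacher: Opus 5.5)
Your proof is correct. On the open $B$-stable subset $Q_uP/H \simeq Q_u \times P/H$, the group $B = B_L \rightthreetimes Q_u$ acts with $Q_u$ transitive on the first factor, so both the open-orbit question and the $B$-eigenfunctions reduce to $B_L$ acting on $P/H \simeq P_u/H_u \simeq \mathfrak p_u/\mathfrak h_u$.

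The paper gives no proof of its own; it cites Brion (Proposition~I.1), Panyushev (Theorem~1.2) and Timashev (Theorem~9.4). Your big-cell reduction is the standard argument underlying those results, which treat the general case $K \subset L$ with $P/H \simeq L \times^K (P_u/H_u)$, here specialized to $K = L$.
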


\begin{remark}
Under the conditions of Proposition~\ref{prop_criterion_spherical}, some partial results on the set of spherical roots of $G/H$ were obtained in~\cite{Pez}.
Namely, Corollary~8.12 and Theorem~6.15 in loc. cit. assert that
\[
\Sigma_L(\mathfrak p_u / \mathfrak h_u) = \lbrace \sigma \in \Sigma_G(G/H) \mid \Supp \sigma \subset \Pi_L \rbrace \ \: \text{and} \ \: \Sigma_G(G/H) \setminus \Sigma_L(\mathfrak p_u / \mathfrak h_u) \subset \Delta^+ \setminus \Delta^+_L,
\]
respectively.
In particular, in the situation of Proposition~\ref{prop_criterion_spherical}  each spherical root of the spherical $L$-module $\mathfrak p_u / \mathfrak h_u$ is automatically a spherical root of $G/H$.
Since the spherical roots of all spherical modules are known from~\cite[\S\,5]{Kn98}, in this way one may obtain all spherical roots of $G/H$ whose support is contained in~$\Pi_L$.
\end{remark}

\section{Degenerations and algorithms for computing the spherical roots}
\label{sect_active_C-roots}

\subsection{Description of the setting}
\label{ssec_setting}

In this subsection, we fix the setting and notation that will be used throughout the whole section.

Let $P \subset G$ be a parabolic subgroup such that $P \supset B^-$ and let $L$ be the standard Levi subgroup of~$P$.
Let $P^+ \supset B$ be the parabolic subgroup of~$G$ opposite to~$P$.
Denote by~$C$ the connected center of~$L$ and retain all the notation and terminology of~\S\,\ref{subsec_Levi_roots}.

We introduce the following additional notation:
\begin{itemize}
\item
for every $\lambda \in \Phi$, the symbol $\widehat \lambda$ stands for the highest weight of the $L$-module $\mathfrak g(\lambda)$;

\item
for every $\delta \in \Delta$, the symbol $\overline \delta$ denotes the image of~$\delta$ under the restriction map $\mathfrak X(T) \to \mathfrak X(C)$.
\end{itemize}

Suppose that $H \subset G$ is a subgroup (not necessarily spherical) regularly embedded in~$P$ and such that $L$ is a Levi subgroup of~$H$, so that $H = L \rightthreetimes H_u$.
Put
\[
\Psi = \Psi(H) = \lbrace \mu \in \Phi^+ \mid \mathfrak g(-\mu) \not \subset \mathfrak h \rbrace.
\]
According to~\cite[Definition~4.1]{Avd_DSS}, elements of $\Psi$ are called \textit{active $C$-roots} of~$H$. (Note that this notion is well defined without the sphericity assumption for~$H$.)
The following property of~$\Psi$ is readily implied by Proposition~\ref{prop_properties_of_g(lambda)}(\ref{prop_properties_of_g(lambda)_b}).

\begin{lemma} \label{lemma_sum}
If $\lambda \in \Psi$ and $\lambda = \mu + \nu$ for some $\mu, \nu \in \Phi^+$, then $\lbrace \mu, \nu \rbrace \cap \Psi \ne \varnothing$.
\end{lemma}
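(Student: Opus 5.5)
We argue by contraposition. Suppose $\lambda = \mu + \nu$ with $\mu, \nu \in \Phi^+$ and $\mu, \nu \notin \Psi$; we will show $\lambda \notin \Psi$. By the definition of $\Psi$, the condition $\mu \notin \Psi$ means $\mathfrak g(-\mu) \subset \mathfrak h$, and similarly $\mathfrak g(-\nu) \subset \mathfrak h$. Since $\mathfrak h$ is a Lie subalgebra of $\mathfrak g$, it then contains $[\mathfrak g(-\mu), \mathfrak g(-\nu)]$.

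The next step is to identify this bracket. Because $\Phi = -\Phi$, all three weights $-\mu$, $-\nu$, $-\lambda$ lie in $\Phi$, and $-\lambda = (-\mu) + (-\nu)$. Proposition~\ref{prop_properties_of_g(lambda)}(\ref{prop_properties_of_g(lambda)_b}) therefore gives $[\mathfrak g(-\mu), \mathfrak g(-\nu)] = \mathfrak g(-\lambda)$. Combined with the first paragraph, this yields $\mathfrak g(-\lambda) \subset \mathfrak h$. Hence $\lambda \notin \Psi$, which contradicts $\lambda \in \Psi$.

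Two points need checking, and both are immediate. First, membership in $\Phi^+$ is only needed to make the condition on $\mu$ and $\nu$ meaningful; the argument itself does not use it. Second, we must have $-\lambda \in \Phi$, and this holds because $\lambda \in \Psi \subset \Phi^+ \subset \Phi$. Neither step presents a real obstacle.
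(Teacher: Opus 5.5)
Your proposal is correct and is exactly the argument the paper intends when it says the lemma is "readily implied by" Proposition~\ref{prop_properties_of_g(lambda)}(\ref{prop_properties_of_g(lambda)_b}). Since $\mathfrak h$ is a subalgebra, $\mathfrak g(-\mu), \mathfrak g(-\nu) \subset \mathfrak h$ gives $\mathfrak g(-\lambda) = [\mathfrak g(-\mu), \mathfrak g(-\nu)] \subset \mathfrak h$. One small correction: the hypothesis $\mu, \nu \in \Phi^+$ is in fact used, because it is what turns $\mu, \nu \notin \Psi$ into $\mathfrak g(-\mu), \mathfrak g(-\nu) \subset \mathfrak h$.
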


It will be convenient for us to work with the subspace~$\mathfrak h^\perp \subset \mathfrak g$.
We have
\begin{equation} \label{eqn_h_perpII}
\mathfrak h^\perp = \mathfrak p_u \oplus \bigoplus \limits_{\mu \in \Psi} \mathfrak g(\mu).
\end{equation}
Put $\mathfrak u = \mathfrak h^\perp \cap \mathfrak p_u^+$ for short; then  $\mathfrak u = \bigoplus \limits_{\mu \in \Psi} \mathfrak g(\mu)$.
Note that $\mathfrak u$ is an $L$-module in a natural way and by Proposition~\ref{prop_properties_of_g(lambda)}(\ref{prop_properties_of_g(lambda)_c}) there is a natural $L$-module isomorphism $\mathfrak u \simeq (\mathfrak p_u / \mathfrak h_u)^*$.

Recall from Proposition~\ref{prop_criterion_spherical} that $H$ being a spherical subgroup of~$G$ is equivalent to $\mathfrak p_u / \mathfrak h_u$ (and hence $\mathfrak u$) being a spherical $L$-module.
In this case one has $N_G(H)^0 = H$ by~\cite[Proposition~3.23]{Avd_DSS}, so Propositions~\ref{prop_finite_index_in_norm} and~\ref{prop_criterion_spherical} imply the following result.

\begin{proposition} \label{prop_num_sr}
Suppose that $H$ is spherical.
Then $|\Sigma_G(G/H)| = \rk_G(\mathfrak p_u / \mathfrak h_u) = \rk_G(\mathfrak u)$.
\end{proposition}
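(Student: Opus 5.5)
The argument is a direct chain of the two cited results, together with the fact, quoted just before the statement, that $N_G(H)^0 = H$ for spherical $H$ in our setting.

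\emph{Step 1: the number of spherical roots equals the rank.} Since $N_G(H)^0 = H$ by \cite[Proposition~3.23]{Avd_DSS}, the quotient $N_G(H)/H$ is the component group $N_G(H)/N_G(H)^0$ of an algebraic group, hence finite. Proposition~\ref{prop_finite_index_in_norm} then says that $\Sigma_G(G/H)$ is a basis of $\QQ\Lambda_G(G/H)$. Therefore
\[
|\Sigma_G(G/H)| = \dim_\QQ \QQ\Lambda_G(G/H) = \rk \Lambda_G(G/H).
\]

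\emph{Step 2: identify the lattice with that of the module.} Because $K = L$, Proposition~\ref{prop_criterion_spherical} gives $\Lambda_G(G/H) = \Lambda_L(\mathfrak p_u/\mathfrak h_u)$, computed with respect to $B_L$. Hence $|\Sigma_G(G/H)| = \rk_L(\mathfrak p_u/\mathfrak h_u)$. The statement writes the subscript as $G$; I read this as the rank of the spherical $L$-module, since $\mathfrak p_u/\mathfrak h_u$ is only an $L$-module.

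\emph{Step 3: pass to the dual $\mathfrak u$.} By Proposition~\ref{prop_properties_of_g(lambda)}(\ref{prop_properties_of_g(lambda)_c}) we have $\mathfrak u \simeq (\mathfrak p_u/\mathfrak h_u)^*$ as $L$-modules. It remains to show that a spherical module $V$ and its dual $V^*$ have the same rank. One route is Theorem~\ref{thm_sph_modules_imp}(\ref{thm_sph_modules_imp_b}) applied inductively. A more direct route uses $\KK[V]$: since $V$ is spherical, the algebra $\KK[V]$ is multiplicity free and its $B$-eigenfunctions generate the lattice $\Lambda(V)$. A $B$-eigenfunction of weight $\lambda$ in $\KK[V]$ is a highest weight vector in $\KK[V] = \mathrm S(V^*)$. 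The corresponding simple submodule has dual with highest weight $-w_0\lambda$, and this dual occurs in $\mathrm S(V) = \KK[V^*]$. So $\Lambda(V^*) = -w_0\Lambda(V)$, which has the same rank.

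\emph{Main obstacle.} The only delicate point is justifying $\rk$ invariance under duality, i.e.\ checking that the weight monoid of $\KK[V]$ generates $\Lambda(V)$. This is standard: rational $B$-semiinvariant functions on the affine spherical variety $V$ are ratios of regular ones. The rest is a direct chaining of the cited results.
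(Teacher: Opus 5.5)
Your proposal is correct and follows the paper's argument. The paper obtains the statement in the same way: $N_G(H)^0 = H$ makes $N_G(H)/H$ finite, then Proposition~\ref{prop_finite_index_in_norm} applies, and then the identification $\Lambda_G(G/H) = \Lambda_L(\mathfrak p_u/\mathfrak h_u)$ of Proposition~\ref{prop_criterion_spherical} finishes the count; the paper leaves the passage to the dual $\mathfrak u \simeq (\mathfrak p_u/\mathfrak h_u)^*$ implicit, and your justification via $\Lambda(V^*) = -w_0\Lambda(V)$ (equivalently, the geometric equivalence of $(L,V)$ and $(L,V^*)$ noted in \S\,\ref{subsec_SM_gen}) fills it correctly.
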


As $H$ contains the center of~$G$, for every central subgroup $Z \subset G$ one has $G/H \simeq (G/Z) / (H/Z)$ as $G/Z$-varieties.
Thanks to Remark~\ref{remark_G/Z}, this means that for computing the set $\Sigma_G(G/H)$ it suffices to restrict ourselves to the case of semisimple~$G$.

In the next subsections (\S\S\,\ref{subsec_reduction_AG}--\ref{ssec_opt_alg}), we assume that $G$ is semisimple and $H$ is spherical.

\subsection{Reduction of the ambient group}
\label{subsec_reduction_AG}

In this subsection we recall from~\cite[\S\,5.3]{Avd_DSS} a natural reduction that under certain conditions enables one to pass from the pair $(G,H)$ to another pair $(G_0,H_0)$ with a ``smaller'' group~$G_0$.
This reduction keeps all the combinatorics of active roots unchanged and preserves the set of spherical roots.
It can be applied before any step of all algorithms discussed in this section.

Consider the set $\Pi_0 = \bigcup \limits_{\lambda \in \Psi} \Supp \widehat \lambda$ and let $L_0 \subset G$ be the standard Levi subgroup with $\Pi_{L_0} = \Pi_0$.
Put $G_0 = L_0'$ and $H_0 = G_0 \cap H$.
Then it is easy to see that $H_0$ is regularly embedded in the parabolic subgroup $P \cap G_0 \subset G_0$ with standard Levi subgroup $L \cap G_0$, which is also a Levi subgroup of~$H_0$.
The connected center of $L \cap G_0$ equals $C \cap G_0$ and $(L \cap G_0)'$ coincides with the product of all simple factors of $L'$ contained in~$G_0$.
If $\mathfrak g(\lambda) \cap \mathfrak g_0 \ne \lbrace 0 \rbrace$ for some $\lambda \in \Phi$, then $\mathfrak g(\lambda) \subset \mathfrak g_0$ and $\mathfrak g(\lambda)$ is simple as an $(L \cap G_0)$-module.
It follows that the objects $\Psi$ and $\mathfrak u$ are naturally identified with those for~$H_0$ and the pairs $(L, \mathfrak h_0)$, $(L,\mathfrak u)$ are equivalent to $(L \cap G_0, \mathfrak h_0)$, $(L \cap G_0, \mathfrak u)$, respectively.

We say that the pair $(G_0, H_0)$ is obtained from $(G,H)$ by \textit{reduction of the ambient group}.
In the next statement, the set of simple roots of~$G_0$, which is~$\Pi_0$, is regarded as a subset of~$\Pi$.

\begin{proposition}[{\cite[Proposition~5.3]{Avd_DSS}}] \label{prop_reduction_AG_sph_roots}
One has $\Sigma_G(G/H) = \Sigma_{G_0}(G_0/H_0)$.
\end{proposition}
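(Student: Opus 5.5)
Since the statement is Proposition~5.3 of~\cite{Avd_DSS}, the plan is to compare the two spherical homogeneous spaces through a parabolic induction. Put $Q = P^+_{0}$, the parabolic subgroup of~$G$ containing~$B$ with standard Levi subgroup~$L_0$, and let $Q^-$ be its opposite containing~$B^-$. The first step is to check that $H \subset L_0 \cdot Q^-_u$ and that $H = H_0 \cdot (\text{a normal subgroup})$ is parabolically induced from $H_0 \subset L_0$. The inclusion holds because $H = L \rightthreetimes H_u$ and $\mathfrak h_u \supset \bigoplus_{\mu \notin \Psi} \mathfrak g(-\mu)$. Every $\mathfrak g(-\mu)$ with $\mu \in \Psi$ has lowest weight $-\widehat\mu$, and all its weights are of the form $-\widehat\mu + (\text{roots of } L)$. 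Their supports lie in $\Pi_0 \cup \Pi_L$, and since these are roots rather than just weights, a connectedness argument shows they lie in $\Delta_{L_0}$. Hence the active part of $\mathfrak h^\perp \cap \mathfrak p^+_u$ sits in $\mathfrak l_0$.

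The second step shows that every $\mathfrak g(-\lambda)$ with $\mathfrak g(-\lambda) \not\subset \mathfrak l_0$ is contained in~$\mathfrak h$. This gives $\mathfrak q^-_u \subset \mathfrak h$, that is, $Q^-_u \subset H$. Together with $L \subset H$, we then write $H = (H \cap L_0) \cdot Q^-_u$ and $H \cap L_0 = Z(L_0)^0 \cdot H_0$ up to finite index. The point here is that $Z(L_0)^0 \subset C \subset L \subset H$.

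Third, we invoke the standard fact on parabolic induction: if $Q^-_u \subset H \subset Q^-$, then $\Sigma_G(G/H) = \Sigma_{L_0}(L_0/(H\cap L_0))$. This follows because $G/H \to G/Q^-$ has fiber $L_0/(H \cap L_0)$, and $B$-semi-invariant functions and $G$-invariant valuations correspond to $B_{L_0}$-semi-invariant ones and $L_0$-invariant ones on the fiber; this is a known result of Luna/Brion, cf.~\cite{Kn91}. Finally, since $H \cap L_0 \supset Z(L_0)^0$, Remark~\ref{remark_G/Z} lets us pass from $L_0$ to $L_0/Z(L_0)^0$, which is isogenous to $G_0 = L_0'$. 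This gives $\Sigma_{L_0}(L_0/(H\cap L_0)) = \Sigma_{G_0}(G_0/H_0)$.

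I expect the main obstacle to be the second step: proving that $\mathfrak g(-\lambda) \subset \mathfrak h$ whenever $\mathfrak g(-\lambda)$ meets $\mathfrak q^-_u$, and that $\mathfrak g(-\lambda)$ is then entirely outside $\mathfrak l_0$. For this I would use that each $\mathfrak g(\lambda)$ is a simple $L$-module (Proposition~\ref{prop_properties_of_g(lambda)}(a)). Its weights therefore differ from $\widehat\lambda$ by roots of~$L$, and the coefficient of any simple root in $\Pi \setminus (\Pi_0 \cup \Pi_L)$ is constant on $\mathfrak g(\lambda)$. Thus either $\mathfrak g(\lambda) \subset \mathfrak l_0$ or $\mathfrak g(\lambda) \cap \mathfrak l_0 = 0$. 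By the definition of~$\Pi_0$, the active ones are of the first kind, and the remaining simple roots of $L$ outside $\Pi_0$ form $L$-factors commuting with~$G_0$.
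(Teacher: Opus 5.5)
Your overall strategy (parabolic induction, then discarding factors that act trivially) is sound, but you induce from a parabolic that is too small, so the first step is false as stated. The inclusion $H \subset L_0 Q^-_u = Q^-$ requires $L \subset Q^-$, i.e.\ $\Pi_L \subset \Pi_0$, and this fails in general. Take $G = \SL_4$, $\Pi_L = \lbrace \alpha_3 \rbrace$, $\Psi = \lbrace \overline\alpha_1 \rbrace$. Then $\mathfrak h = \mathfrak l \oplus \mathfrak g(-\overline\alpha_2) \oplus \mathfrak g(-\overline\alpha_1-\overline\alpha_2)$ is a spherical subalgebra with $\mathfrak u = \mathfrak g_{\alpha_1}$, and $\Pi_0 = \lbrace \alpha_1 \rbrace$. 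But $\mathfrak g_{\alpha_3} \subset \mathfrak l \subset \mathfrak h$ is not contained in $\mathfrak q^-$. So $H \not\subset Q^-$, and the induction theorem, which needs $Q^-_u \subset H \subset Q^-$, does not apply. In the same example $H = (H \cap L_0)\cdot Q^-_u$ fails, and so does $Z(L_0)^0 \subset C$, since $C$ is killed by $\alpha_3$ and $Z(L_0)^0$ by $\alpha_1$. Only $Z(L_0)^0 \subset T \subset H$ holds, which is what you actually need.

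To repair this, induce from the parabolic $\widehat Q \supset P$ whose standard Levi subgroup $\widehat L$ has $\Pi_{\widehat L} = \Pi_0 \cup \Pi_L$.
\begin{itemize}
\item $H \subset P \subset \widehat Q$.
\item $\widehat Q_u \subset H$: every weight $\beta$ of $\mathfrak g(\lambda)$ with $\lambda \in \Psi$ satisfies $\widehat\lambda - \beta \in \ZZ_{\ge0}\Pi_L$, hence $\Supp\beta \subset \Supp\widehat\lambda \subset \Pi_0$. So each $\alpha \in \Delta^+ \setminus (\Delta_{L_0} \cup \Delta_L)$ has $\overline\alpha \notin \Psi$ and $\mathfrak g_{-\alpha} \subset \mathfrak h$.
\item $\Pi_L \setminus \Pi_0$ is orthogonal to $\Pi_0$. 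You assert this at the end but never prove it. It follows from $L$-dominance of $\widehat\lambda$: if $\beta \in \Pi_L \setminus \Supp\widehat\lambda$ were adjacent to a root in $\Supp\widehat\lambda$, then $\langle \beta^\vee, \widehat\lambda\rangle < 0$.
\end{itemize}
Consequently $\widehat L = Z(\widehat L)^0 \cdot G_0 \cdot L_2'$, where $L_2'$ is the semisimple subgroup attached to $\Pi_L \setminus \Pi_0$. This $L_2'$ is normal in $\widehat L$ and contained in $L \subset H$, so it acts trivially on $\widehat L/(H \cap \widehat L)$; so does $Z(\widehat L)^0 \subset T$. Hence $\widehat L/(H \cap \widehat L) \simeq G_0/H_0$, with the $\widehat L$-action factoring through a quotient of $G_0$ by a finite central subgroup contained in $H_0$. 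Remark~\ref{remark_G/Z} covers only central subgroups, so for $L_2'$ you need the analogous statement for a normal subgroup acting trivially, or you can argue directly with the $G_0$-action.

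The same orthogonality is what makes your dichotomy ``$\mathfrak g(\lambda) \subset \mathfrak l_0$ or $\mathfrak g(\lambda) \cap \mathfrak l_0 = 0$'' true. Constancy of the coefficients on $\Pi \setminus (\Pi_0 \cup \Pi_L)$ alone does not give it.
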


\subsection{Construction of degenerations and the base algorithm}
\label{subsec_degen_add}

In this subsection we present the construction of degenerations (called additive degenerations in~\cite{Avd_DSS}), which is the most important part of our algorithms for computing the set $\Sigma_G(G/H)$.
This construction depends on the choice of $\lambda \in \Psi$, which is assumed to be fixed throughout this subsection.

Put $\delta = \widehat \lambda$ and let $\mathfrak s(\delta) \simeq \mathfrak{sl}_2$ be the subalgebra of~$\mathfrak g$ spanned by $e_\delta$, $h_\delta$, and $e_{-\delta}$.
Consider the one-parameter unipotent subgroup $\phi \colon \GG_a \to G$ given by $\phi(t) = \exp(te_{-\delta})$.
For every $t \in \GG_a$, we put $\mathfrak h_t = \phi(t)\mathfrak h$.
According to~\cite[Proposition~2.4]{Avd_DSS}, there exists $\lim \limits_{t \to \infty} \mathfrak h_t$; we denote it by~$\mathfrak h_\infty$.
In what follows, $\mathfrak h_\infty$ is referred to as the degeneration of~$\mathfrak h$ defined by~$\lambda$.

Note that $\mathfrak h_t^\perp = \phi(t) \mathfrak h^\perp$ and $\mathfrak h_\infty^\perp = \lim \limits_{t \to \infty} \mathfrak h_t^\perp$.

To describe the subalgebra $\mathfrak h_\infty$, we introduce the set \[
Y(\delta) = \lbrace \alpha \in \Delta \mid \alpha+ \delta \notin \Delta \rbrace.
\]
For every $\alpha \in Y(\delta)$, let $V(\alpha) \subset \mathfrak g$ be the $\mathfrak s(\delta)$-submodule generated by~$e_\alpha$.
The following properties of $V(\alpha)$ are straightforward:
\begin{itemize}
\item
$V(\alpha)$ is a simple $\mathfrak s(\delta)$-module with highest weight $\delta^\vee(\alpha)$;

\item
$e_\alpha$ is a highest-weight vector of~$V(\alpha)$;

\item
$V(\alpha)$ is $T$-stable.
\end{itemize}
Then there is the following decomposition of $\mathfrak g$ into a direct sum of $\mathfrak s(\delta)$-submodules:
\begin{equation} \label{eqn_g_decompII}
\mathfrak g = (h_\delta^\perp \cap \mathfrak t) \oplus \bigoplus \limits_{\alpha \in Y(\delta)} V(\alpha).
\end{equation}
Comparing this with~(\ref{eqn_h_perpII}) we find that
\begin{equation} \label{eqn_h^perp_decompII}
\mathfrak h^\perp = \bigoplus \limits_{\alpha \in Y(\delta)} (\mathfrak h^\perp \cap V(\alpha)).
\end{equation}
By Proposition~\ref{prop_limitII_prelim}, for every $\alpha \in Y(\delta)$ there exists $\lim \limits_{t \to \infty} (\mathfrak h^\perp_t \cap V(\alpha))$, which we will denote by $(\mathfrak h^\perp \cap V(\alpha))_\infty$.
Then decompositions~(\ref{eqn_g_decompII}) and~(\ref{eqn_h^perp_decompII}) imply the decomposition
\begin{equation} \label{eqn_h_perp_lim}
\mathfrak h_\infty^\perp = \bigoplus \limits_{\alpha \in Y(\delta)} (\mathfrak h^\perp \cap V(\alpha))_\infty.
\end{equation}

For every $\alpha \in Y(\delta)$ the limit $(\mathfrak h^\perp \cap V(\alpha))_\infty$ is determined using Proposition~\ref{prop_limitII_prelim}.
Since the subspace $\mathfrak h^\perp \cap V(\alpha) \subset V(\alpha)$ is $h_\delta$-stable, $(\mathfrak h^\perp \cap V(\alpha))_\infty$ is described in terms of shifting $h_\delta$-weight subspaces in $\mathfrak h^\perp \cap V(\alpha)$ as explained in the paragraph after Proposition~\ref{prop_limitII_prelim}.

To state the main properties of $\mathfrak h_\infty$, we apply the construction of \S\,\ref{subsec_char_of_SM} with $G = L$, $V = \mathfrak u$, and $\omega = \delta$.
Put $Q = \lbrace g \in L \mid \Ad(g) \mathfrak g_{\delta} = \mathfrak g_{\delta} \rbrace$; this is a parabolic subgroup of~$L$ containing $B_L$.
Let $Q^- \supset B^- \cap L$ be the parabolic subgroup of~$L$ opposite to~$Q$.
Let $M$ be the standard Levi subgroup of~$Q$ and let $M_0$ be the stabilizer of $e_\delta$ in~$M$.
Put $\Delta^+_L(\delta) = \lbrace \alpha \in \Delta_L^+ \mid (\alpha, \delta) > 0 \rbrace$, so that $\Delta^+_L(\delta) = \Delta^+_L \setminus \Delta^+_M$.
Regard the element $e_{-\delta}$ as a linear function on~$\mathfrak u$ via the fixed $G$-invariant inner product on~$\mathfrak g$.
Put
\[
\widetilde {\mathfrak u} = \lbrace x \in \mathfrak u \mid (\mathfrak q_u e_{-\delta})(x) = 0 \rbrace
\]
and $\mathfrak u_0 = \widetilde {\mathfrak u} \cap \Ker e_{-\delta}$.
Note that
\[
\mathfrak u_0 = \bigoplus \limits_{\substack{\alpha \in \Delta^+ \colon \overline \alpha \in \Psi, \\ \delta - \alpha \notin \Delta^+_L(\delta) \cup \lbrace 0 \rbrace}} \mathfrak g_{\alpha}.
\]
Then there is the decomposition $\mathfrak u = \mathfrak g_{\delta} \oplus [\mathfrak q^-_u, \mathfrak g_\delta] \oplus \mathfrak u_0$ into a direct sum of $M$-modules.
Consider the decomposition $\mathfrak h_\infty^\perp = (\mathfrak h_\infty^\perp \cap \mathfrak p_u) \oplus (\mathfrak h_\infty^\perp \cap \mathfrak l) \oplus (\mathfrak h_\infty^\perp \cap \mathfrak p^+_u)$ and put $\mathfrak u_\infty = \mathfrak h_\infty^\perp \cap \mathfrak p_u^+$ for short.

\begin{proposition}[{\cite[Proposition~5.11]{Avd_DSS}}] \label{prop_h0_typeII}
The following assertions hold.
\begin{enumerate}[label=\textup{(\alph*)},ref=\textup{\alph*}]
\item \label{prop_h0_typeII_a}
$\mathfrak h_\infty^\perp \cap \mathfrak p_u = \mathfrak p_u$.

\item \label{prop_h0_typeII_b}
$\mathfrak h_\infty^\perp \cap \mathfrak l = \mathfrak q_u^- \oplus \langle h_\delta \rangle$.

\item \label{prop_h0_typeII_c}
The subspace $\mathfrak u_\infty$ is $M$-stable and there is an $M_0$-module isomorphism $\mathfrak u_0 \simeq \mathfrak u_\infty$. Moreover, under this isomorphism each highest-weight vector in $\mathfrak u_0$ of $T$-weight~$\alpha$ corresponds to a highest-weight vector in $\mathfrak u_\infty$ of $T$-weight $\alpha - k_\alpha \delta$ for some $k_\alpha \in \ZZ_{\ge0}$.
\end{enumerate}
\end{proposition}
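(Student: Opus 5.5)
The plan is to work with the orthogonal complements and with the decompositions~(\ref{eqn_h_perp_lim}) and~(\ref{eqn_h^perp_decompII}). By~(\ref{eqn_h_perpII}), $\mathfrak h^\perp = \mathfrak p_u \oplus \mathfrak u$, so $\mathfrak h^\perp \cap \mathfrak l = 0$. For each $\alpha \in Y(\delta)$ the space $\mathfrak h^\perp \cap V(\alpha)$ is spanned by those $T$-weight vectors $e_{\alpha - i\delta}$ of $V(\alpha)$ whose root lies in $\Delta \setminus \Delta^+_P$ or restricts to an element of $\Psi$. Proposition~\ref{prop_limitII_prelim} then says that these vectors shift down to the lowest $h_\delta$-weights of $V(\alpha)$. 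We only need to track which roots the limit weights correspond to and sort the results by $C$-weight: negative $C$-weight gives $\mathfrak p_u$, zero gives $\mathfrak l$, positive gives $\mathfrak p_u^+$.

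\emph{Assertion (a).} Since $\overline\delta = \lambda \in \Phi^+$, applying $e_{-\delta}$ lowers the $C$-weight by $\lambda$. Along a string $V(\alpha)$ the $C$-weights therefore decrease strictly as the $h_\delta$-weight decreases. All the vectors of $\mathfrak p_u$ lying in $V(\alpha)$ are at the bottom of the string, and they already belong to $\mathfrak h^\perp$. So the lowest $k$ weight vectors of the string are a bottom segment of the vectors in $\mathfrak h^\perp \cap V(\alpha)$, and the $\mathfrak p_u$-part is kept. Dimension counting then gives $\mathfrak p_u \subset \mathfrak h_\infty^\perp$.

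\emph{Assertion (b).} The vectors of $\mathfrak h_\infty^\perp$ that land in $\mathfrak l$ come from strings $V(\alpha)$ that meet $\mathfrak g(\lambda)$ (or contain $e_\delta$ itself) in $\mathfrak u$. Such a string has the form $\beta, \beta - \delta$ with $\beta \in \Delta(\lambda)$. The key computation is which $\beta$ give a string through $\mathfrak l$ whose top part lies in $\mathfrak h^\perp$. There are two cases.
\begin{enumerate}
\item $\beta = \delta$. The string $e_\delta, h_\delta, e_{-\delta}$ meets $\mathfrak h^\perp$ in $e_\delta$ and $e_{-\delta}$. After shifting, its bottom two vectors are $e_{-\delta}$ and $h_\delta$, so this string contributes $\langle h_\delta \rangle$.
\item $\beta = \delta - \gamma$ with $\gamma \in \Delta^+_L$, where the string runs $\beta$, then $-\gamma$, then possibly lower. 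The vector $e_\beta$ lies in $\mathfrak u$, and $e_{-\gamma}$ enters the limit exactly when the $h_\delta$-weight is positive, that is, when $(\gamma,\delta) > 0$. This gives $\gamma \in \Delta^+_L(\delta)$, and so yields $\mathfrak q_u^-$.
\end{enumerate}
One also has to check that strings starting in $\mathfrak g(\lambda+\mu)$ with $\mu \in \Psi$ never reach $\mathfrak l$. This holds because their $C$-weights drop by $\lambda$ at each step, which passes over $0$ unless $\mu = 0$.

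\emph{Assertion (c).} This is the main obstacle. The positive $C$-weight part of $\mathfrak h_\infty^\perp$ is determined in the same way. On each string, the weight $\alpha$ of a vector of $\mathfrak u$ shifts to $\alpha - k\delta$ for some $k \ge 0$. Vectors in $\mathfrak g_\delta \oplus [\mathfrak q^-_u, \mathfrak g_\delta]$ are used up by assertion (b). What remains is, weight by weight, in bijection with the weights of $\mathfrak u_0$, with $k_\alpha$ read off from each string. For $M$-stability, I would use that $\mathfrak h_t$ is stable under $\phi(t) M_0 \phi(t)^{-1}$, together with the fact that $M$ normalizes $\mathfrak g_\delta$ and $\mathfrak g_{-\delta}$ up to scalars, so that $M$ normalizes $\phi(\GG_a)$ modulo reparametrizing $t$. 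Hence $\mathfrak h_\infty$ is $M$-stable.

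For the $M_0$-isomorphism, I would define the map on weight vectors and verify equivariance using the fact that $M_0$ commutes with $\mathfrak s(\delta)$, which holds because $M_0$ fixes $e_\delta$ and $e_{-\delta}$. The action of $\mathfrak m_0$ therefore commutes with $e_{-\delta}$, and so it intertwines the shifting on strings. The subtle points are that a lowering by a fixed number of steps is compatible across strings of different lengths, and that highest weight vectors go to highest weight vectors. I would check these by noting that $\mathfrak m_0$ root vectors $e_\gamma$ with $(\gamma,\delta) = 0$ map strings to strings of equal length.
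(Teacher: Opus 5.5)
Your strategy is the natural one that \S\,\ref{subsec_degen_add} sets up: split $\mathfrak h^\perp$ along the $\mathfrak s(\delta)$-strings $V(\alpha)$, push each string down by Proposition~\ref{prop_limitII_prelim}, and sort the limit vectors by $C$-weight. The paper itself gives no argument and only cites \cite[Proposition~5.11]{Avd_DSS}. Part (a) is right, and so is the $M$-stability in (c). For the latter, the facts to state are that $M \subset L \subset H$ stabilizes $\mathfrak h$, and that $m\phi(t)m^{-1} = \phi(\chi(m)t)$ for a character $\chi$ of~$M$.

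Part (b) reaches the right answer, but your side claim that strings starting in $\mathfrak g(\lambda+\mu)$, $\mu \in \Psi$, never reach $\mathfrak l$ is false for $\mu = \lambda$. In case~\ref{Bnn2} of Table~\ref{table_par1psi2}, take $\lambda = \overline\alpha_n$ and $\delta = \alpha_1+\ldots+\alpha_n$. The $\delta$-string of $\alpha_1+\ldots+\alpha_{j-1}+2(\alpha_j+\ldots+\alpha_n)$ passes through $\alpha_j+\ldots+\alpha_n$ and ends at $-(\alpha_1+\ldots+\alpha_{j-1}) \in \Delta_L$. This is harmless, because such a string still has a $\mathfrak g(\lambda)$-vector directly above its $\mathfrak l$-vector. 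The clean criterion is that a root vector $e_\beta$ with $\beta \in \Delta_L$ survives in the limit iff $\beta+\delta \in \Delta$. Since $\delta$ is the $B_L$-highest weight, this means $\beta = -\gamma$ with $\langle \delta, \gamma^\vee\rangle > 0$. It is not a condition on some $h_\delta$-weight being positive.

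\textbf{The gap is the $M_0$-isomorphism in (c).} How far a vector of $\mathfrak u_0$ is shifted is not determined by the length of its string. It depends on which positions of that string lie in $\mathfrak h^\perp$ and on where the string crosses $C$-weight~$0$, so two strings of equal length can be shifted by different amounts. Your proposed check, that root vectors of $\mathfrak m_0$ map strings to strings of equal length, therefore does not make the string-wise shift map equivariant. As written, the isomorphism is not established.

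A short way to close the gap:
\begin{enumerate}
\item For $\gamma \in \Delta^+_M$ one has $\delta \pm \gamma \notin \Delta$ and $(\gamma,\delta) = 0$, so $\operatorname{ad} e_\gamma$ commutes with $\mathfrak s(\delta)$. Its kernel therefore meets each simple module $V(\alpha)$ in $0$ or all of $V(\alpha)$. Hence a root vector is $U_M$-invariant iff its whole string is.
\item Root spaces are one-dimensional. So the $B_M$-highest-weight vectors of the $M$-modules $\mathfrak u_0$ and $\mathfrak u_\infty$ are exactly their root vectors lying on $U_M$-invariant strings.
\item Your per-string count says $\mathfrak u_0$ and $\mathfrak u_\infty$ meet every string in equally many vectors: the $\mathfrak g(\lambda)$-vector above an $\mathfrak l$-vector is traded for that $\mathfrak l$-vector. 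Matching the $i$-th lowest vectors gives a bijection of highest weights $\alpha \mapsto \alpha - k_\alpha\delta$ with $k_\alpha \ge 0$.
\item Since $\delta \in \mathfrak X(M)$ is trivial on $M_0$, the simple $M$-modules with highest weights $\alpha$ and $\alpha - k_\alpha\delta$ restrict to isomorphic $M_0$-modules. This yields $\mathfrak u_0 \simeq \mathfrak u_\infty$.
\end{enumerate}
Alternatively, you can keep your explicit map. You must first show that each irreducible $M_0$-submodule of the multiplicity space of a given simple $\mathfrak s(\delta)$-module consists of strings with one and the same pattern. This follows because $M_0 \subset H$ preserves $\mathfrak h^\perp$ and centralizes~$C$. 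On such a block the map is then a fixed power of $\operatorname{ad} e_{-\delta}$, which commutes with $M_0$.
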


Put $R = Q^- \rightthreetimes P_u$; then $R$ is a standard parabolic subgroup of~$G$ containing $B^-$ and having $M$ as a Levi subgroup.
Let $H_\infty \subset G$ be the connected subgroup with Lie algebra~$\mathfrak h_\infty$ and consider the subgroup $N = N(\lambda) = N_G(H_\infty)^0$.
We note that $H_\infty$ is automatically spherical in~$G$ by \cite[Proposition~1.3(i)]{Bri90}, hence $N$ is also spherical.
In the next theorem, parts (\ref{thm_degen_a},\,\ref{thm_degen_b}) are just \cite[Theorem~5.12(a,\,b)]{Avd_DSS} while part~(\ref{thm_degen_c}) follows from \cite[Theorem~5.12(c,\,d)]{Avd_DSS} and the discussion in~\cite[\S\,3.9]{Avd_DSS}.

\begin{theorem} \label{thm_degen}
The following assertions hold.
\begin{enumerate}[label=\textup{(\alph*)},ref=\textup{\alph*}]
\item \label{thm_degen_a}
The subgroup $H_\infty$ is regularly embedded in $R$, $M_0$ is a Levi subgroup of~$H_\infty$, and $\mathfrak r_u / (\mathfrak h_\infty)_u \simeq \mathfrak u_\infty^*$ as $M_0$-modules.

\item \label{thm_degen_b}
$N = M \rightthreetimes (H_\infty)_u$.
In particular, $\dim N = \dim H + 1$.

\item \label{thm_degen_c}
There is $\sigma \in \Sigma_G(G/H)$ such that $\Sigma_G(G/H) \setminus \lbrace \sigma \rbrace = \Sigma_G(G/N)$.
\end{enumerate}
\end{theorem}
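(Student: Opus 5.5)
Parts (\ref{thm_degen_a}) and (\ref{thm_degen_b}) are \cite[Theorem~5.12(a,\,b)]{Avd_DSS}, so I would not reprove them. Instead I would recall how they follow from Proposition~\ref{prop_h0_typeII}; the new work is in part~(\ref{thm_degen_c}). For~(\ref{thm_degen_a}) I would read off $\mathfrak h_\infty$ from $\mathfrak h_\infty^\perp$. By Proposition~\ref{prop_h0_typeII}(\ref{prop_h0_typeII_a},\,\ref{prop_h0_typeII_b}), $\mathfrak h_\infty^\perp$ is the sum of $\mathfrak p_u \oplus \mathfrak q^-_u \oplus \langle h_\delta\rangle$ and $\mathfrak u_\infty$. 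Taking orthogonal complements gives
\[
\mathfrak h_\infty = (\mathfrak m \cap h_\delta^\perp) \oplus \mathfrak q^-_u \oplus (\mathfrak h_\infty)_u, \qquad (\mathfrak h_\infty)_u \subset \mathfrak r_u = \mathfrak q^-_u \oplus \mathfrak p_u .
\]
Since $\mathfrak m \cap h_\delta^\perp = \mathfrak m_0$ (the stabilizer of $e_\delta$ in $\mathfrak m$), this gives the regular embedding in $R$ with Levi subgroup $M_0$. The identification $\mathfrak r_u/(\mathfrak h_\infty)_u \simeq \mathfrak u_\infty^*$ then follows by the same duality argument that gave $\mathfrak u \simeq (\mathfrak p_u/\mathfrak h_u)^*$.

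For~(\ref{thm_degen_b}) I would show two things. First, $M$ normalizes $\mathfrak h_\infty$, because $\mathfrak h_\infty^\perp$ is $M$-stable: $\mathfrak u_\infty$ is $M$-stable by Proposition~\ref{prop_h0_typeII}(\ref{prop_h0_typeII_c}), and the other summands are clearly $M$-stable. Second, the normalizer is not larger. Here I would apply \cite[Proposition~3.23]{Avd_DSS} to the regularly embedded spherical subgroup $M \rightthreetimes (H_\infty)_u \subset R$, whose Levi subgroup $M$ coincides with that of $R$; this gives $N_G(M \rightthreetimes (H_\infty)_u)^0 = M \rightthreetimes (H_\infty)_u$. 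Since $\dim M = \dim M_0 + 1$, the dimension count $\dim N = \dim H + 1$ follows from $\dim H_\infty = \dim H$.

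For~(\ref{thm_degen_c}) I would combine three ingredients. First, $H_\infty$ lies in the closure of the $G$-conjugacy class of $H$ (in the Grassmannian of subalgebras). By the theory of such degenerations (\cite[\S\,3]{Avd_DSS}), the limit of $G/H_t$ gives a $G$-equivariant degeneration of $G/H$ to $G/H_\infty$. This preserves the weight lattice and yields $\Sigma_G(G/H_\infty) \subset \Sigma_G(G/H)$; more precisely, it is the set of spherical roots orthogonal to the valuation of the degeneration, which is a face of the valuation cone. Second, $N/H_\infty$ is a one-dimensional torus. Hence $G/N$ is obtained by quotienting by this torus, and by Proposition~\ref{prop_finite_index_in_norm} the face of $\mathcal V_G(G/H_\infty)$ spanned by the image of $\mathfrak n/\mathfrak h_\infty$ drops exactly one spherical root. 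Third, a rank count closes the argument. Proposition~\ref{prop_num_sr} for $N$ (or Theorem~\ref{thm_sph_modules_imp}(\ref{thm_sph_modules_imp_b}) applied to $\widetilde{\mathfrak u}$ versus $\mathfrak u_0$) gives $\rk \Lambda_G(G/N) = \rk \Lambda_G(G/H) - 1$, and one checks $N_G(N)^0 = N$. Together these force $|\Sigma_G(G/N)| = |\Sigma_G(G/H)| - 1$ and $\Sigma_G(G/N) \subset \Sigma_G(G/H)$.

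The main obstacle is showing $\Sigma_G(G/N) \subset \Sigma_G(G/H)$. The degeneration $G/H \rightsquigarrow G/H_\infty$ is not a priori one coming from a toroidal embedding of $G/H$. I would first try to realise the family $t \mapsto \mathfrak h_t$ as the normal bundle of a $G$-stable divisor in some simple embedding of $G/H$, as in \cite[\S\,3.9]{Avd_DSS}. That would identify $H_\infty$, up to the torus $N/H_\infty$, with the stabilizer of a point in the boundary divisor, where inclusion of spherical roots is standard.
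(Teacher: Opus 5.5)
Your treatment of (\ref{thm_degen_a}) and (\ref{thm_degen_b}) agrees with the paper, which imports both parts from \cite[Theorem~5.12(a,\,b)]{Avd_DSS}. Your sketch of (\ref{thm_degen_b}) needs one repair. \cite[Proposition~3.23]{Avd_DSS} gives $N_G(N')^0=N'$ for $N'=M\rightthreetimes(H_\infty)_u$, but to conclude $N=N'$ you also need $N\subset N_G(N')$. This holds because $N_G(H_\infty)/H_\infty$ is diagonalizable ($H_\infty$ is spherical), so every subgroup between $H_\infty$ and $N_G(H_\infty)$ is normal in $N_G(H_\infty)$.

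The genuine gap is in (\ref{thm_degen_c}). Your closing step (inclusion plus the cardinality count gives the statement) is fine, but the inclusion $\Sigma_G(G/N)\subset\Sigma_G(G/H)$ is never established. Your first ingredient simply asserts $\Sigma_G(G/H_\infty)\subset\Sigma_G(G/H)$, which your last paragraph concedes you cannot justify, so the argument is circular at its only nontrivial point.

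The first ingredient is also partly false. The degeneration does not preserve the weight lattice of the connected group $H_\infty$: for $G=\SL_2$ and $H=T$ one gets for $H_\infty$ the maximal unipotent subgroup of $B^-$, with lattice $\ZZ\varpi_1\ne\ZZ\alpha_1=\Lambda_G(G/T)$. So even the exact form of the roots (no rescaling by $2$) needs a separate argument.

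Your second ingredient misplaces where the root is lost. One-parameter subgroups of the torus $N/H_\infty\subset N_G(H_\infty)/H_\infty$ give elements of the linear part $\mathcal V_G(G/H_\infty)\cap(-\mathcal V_G(G/H_\infty))$, on which every spherical root vanishes. Hence passing from $G/H_\infty$ to $G/N$ lowers the rank by one but removes no spherical root; the roots of $G/N$ are positive multiples of those of $G/H_\infty$. The root $\sigma$ must disappear in the degeneration $\mathfrak h\to\mathfrak h_\infty$ itself. What you need is a $G$-invariant valuation attached to it that lies on all walls of $\mathcal V_G(G/H)$ but one.

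A standard way to get this is to regard $\mathfrak h_\infty$ as a boundary point of $\overline{G\cdot\mathfrak h}$ in the Grassmannian.
\begin{itemize}
\item Since $N_G(H)^0=H$ and $\dim N=\dim H+1$, the orbit $G\cdot\mathfrak h_\infty\simeq G/N_G(\mathfrak h_\infty)$ has codimension one.
\item After normalization it is the finite image of the open orbit of a $G$-stable prime divisor $D$ in an embedding of $G/N_G(H)$.
\item Luna--Vust theory identifies the spherical roots of that orbit with those of $G/N_G(H)$ orthogonal to $v_D$, and shows that its rank is one less.
\item Since $N_G(N)^0=N$, the number of these roots equals the rank of the orbit, which is one less than $|\Sigma_G(G/N_G(H))|$. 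So $v_D$ is nonzero on exactly one root.
\item It remains to pass carefully between $H$, $N_G(H)$ and $N$, $N_G(\mathfrak h_\infty)$.
\end{itemize}
The paper does not redo any of this: it obtains (\ref{thm_degen_c}) directly from \cite[Theorem~5.12(c,\,d)]{Avd_DSS} together with the discussion in \cite[\S\,3.9]{Avd_DSS}.
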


Now suppose $\lambda_1,\lambda_2 \in \Psi$ and $\lambda_1 \ne \lambda_2$.
For $i=1,2$ put $N_i = N(\lambda_i)$ for short and let $\sigma_i \in \Sigma_G(G/H)$ be the spherical root appearing in Theorem~\ref{thm_degen}(\ref{thm_degen_c}), so that $\Sigma_G(G/H) \setminus \lbrace \sigma_i \rbrace = \Sigma_G(G/N_i)$.
The next result is implied by~\cite[Proposition~5.13]{Avd_DSS}.

\begin{proposition} \label{prop_sr_union}
One has $\sigma_1 \ne \sigma_2$, so that $\Sigma_G(G/H) = \Sigma_G(G/N_1) \cup \Sigma_G(G/N_2)$.
\end{proposition}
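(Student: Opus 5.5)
The second assertion follows from the first. By Theorem~\ref{thm_degen}(\ref{thm_degen_c}), each $\Sigma_G(G/N_i)$ equals $\Sigma_G(G/H)$ with only the root $\sigma_i$ removed. So if $\sigma_1 \ne \sigma_2$, then $\sigma_2 \in \Sigma_G(G/N_1)$ and $\sigma_1 \in \Sigma_G(G/N_2)$, and the union of the two sets is all of $\Sigma_G(G/H)$. The whole task is therefore to show $\sigma_1 \ne \sigma_2$. I would do this by identifying $\sigma_i$ through a $G$-invariant valuation attached to the degeneration defined by $\lambda_i$, and then checking that the two valuations lie on different faces of $\mathcal V_G(G/H)$.

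\emph{Step 1: locate the valuation.} Fix $i$ and set $\delta_i = \widehat\lambda_i$. The family $\mathfrak h_t = \phi_i(t)\mathfrak h$ with $\phi_i(t) = \exp(te_{-\delta_i})$ degenerates $G/H$ to $G/H_\infty$. The general theory of one-parameter degenerations (\cite[\S\,3]{Avd_DSS}, underlying Theorem~\ref{thm_degen}) then yields a $G$-invariant valuation $v_i$ of $\KK(G/H)$. Its image $\rho_i = \rho_{v_i}$ spans a ray in $\mathcal V_G(G/H)$ with the following properties:
\begin{itemize}
\item $\rho_i$ lies in the relative interior of the facet $\lbrace q \in \mathcal V_G(G/H) \mid q(\sigma) = 0 \text{ for all } \sigma \ne \sigma_i \rbrace$;
\item $\rho_i(\sigma_i) < 0$.
\end{itemize}
This is the mechanism by which $\sigma_i$ is the root that disappears in Theorem~\ref{thm_degen}(\ref{thm_degen_c}). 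Since $\Sigma_G(G/H)$ is linearly independent, distinct roots give distinct rays. Hence it suffices to show that $\rho_1$ and $\rho_2$ are not proportional.

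\emph{Step 2: compute $\rho_i$.} By Theorem~\ref{thm_degen}(\ref{thm_degen_b}), $N_i = M_i \rightthreetimes (H_\infty)_u$, and by Theorem~\ref{thm_degen}(\ref{thm_degen_a}) the group $M_{i,0}$ is a Levi subgroup of $H_\infty$. So the one-dimensional torus $N_i/H_\infty \simeq M_i/M_{i,0}$ acts on $G/H_\infty$ by $G$-automorphisms, and its Lie algebra is spanned by the image of $h_{\delta_i}$ (compare Proposition~\ref{prop_h0_typeII}(\ref{prop_h0_typeII_b})). I would show that $\rho_i$ is a positive multiple of the functional $-\delta_i^\vee$ restricted to $\Lambda_G(G/H) = \Lambda_L(\mathfrak p_u/\mathfrak h_u)$. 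This identification follows from the fact that the degeneration is induced by $e_{-\delta_i}$ together with the weight shifts by multiples of $\delta_i$ described in Proposition~\ref{prop_h0_typeII}(\ref{prop_h0_typeII_c}).

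\emph{Step 3: non-proportionality.} It remains to find a weight in $\Lambda_G(G/H)$ on which $\delta_1^\vee$ and $\delta_2^\vee$ are not proportional. Run Algorithm~\ref{alg_sph_mod} on $\mathfrak u$, starting with $\omega = \delta_1$ and then with $\omega = \delta_2$. In this way both $\delta_1$ and $\delta_2$ appear as highest weights of $\mathfrak u$, and their duals contribute to $\Lambda_L(\mathfrak u^*)$. Since $\lambda_1 \ne \lambda_2$ are distinct $C$-roots, $\delta_1$ and $\delta_2$ are highest weights of non-isomorphic simple summands $\mathfrak g(\lambda_1)$ and $\mathfrak g(\lambda_2)$.

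I expect this last step to be the main obstacle, because the pairings $\langle\delta_j^\vee,\delta_k\rangle$ can coincide. If they do, I would instead separate $\rho_1$ from $\rho_2$ by the $C$-part of the lattice. The restrictions $\varepsilon(\delta_1) = \lambda_1$ and $\varepsilon(\delta_2) = \lambda_2$ are distinct, and the linear independence of the output $\Theta$ of Algorithm~\ref{alg_sph_mod} (Proposition~\ref{prop_sph_lin_ind}) should force the two coroot functionals to differ on the span of $\Theta$. Failing that, I would fall back on a direct appeal to \cite[Proposition~5.13]{Avd_DSS}, which is the source of this statement.
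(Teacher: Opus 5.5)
Your reduction of the union formula to $\sigma_1 \ne \sigma_2$ is correct, and your last-resort appeal to \cite[Proposition~5.13]{Avd_DSS} is literally the paper's entire proof. The valuation argument you actually propose, however, fails at Step~2. The claim that $\rho_i$ is a positive multiple of $-\delta_i^\vee|_{\Lambda_G(G/H)}$ is false.

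Take $G=\SL_4$, $\Pi\setminus\Pi_L=\lbrace\alpha_1,\alpha_3\rbrace$, $\Psi=\lbrace\overline\alpha_1,\overline\alpha_3\rbrace$ (case~\ref{An1} of Table~\ref{table_A} with $n=3$, $k=1$). Then $\Sigma_G(G/H)=\lbrace\alpha_1,\alpha_2,\alpha_3\rbrace$ is a basis of $\QQ\Lambda_G(G/H)$. For $\lambda_1=\overline\alpha_1$ one has $\delta_1=\alpha_1+\alpha_2$, so $-\delta_1^\vee$ takes the values $-1,-1,+1$ on $\alpha_1,\alpha_2,\alpha_3$. No multiple of this functional lies in $\mathcal V_G(G/H)$. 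Still less does one lie on the ray $\lbrace q(\alpha_2)=q(\alpha_3)=0,\ q(\alpha_1)<0\rbrace$ that your Step~1 forces; the paper's computation for this case gives $\sigma_1=\alpha_1$ and $\sigma_2=\alpha_3$.

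The heuristic ``the degeneration along $\exp(te_{-\delta})$ is the cocharacter $\delta^\vee$'' does not produce the invariant valuation. The image of a cocharacter in $\mathcal Q_G(G/H)$ is only guaranteed to lie in the valuation cone when the cocharacter is antidominant. Here $\delta_i^\vee$ need not be dominant: in the example $\langle\delta_1^\vee,\alpha_3\rangle=-1$.

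You have also misplaced the difficulty. Step~3 as written is not an argument (``should force''), but if Step~2 were true it would be immediate. Indeed $\delta_1,\delta_2\in\QQ\Lambda_G(G/H)$, since up to sign they are weights of linear $B_L$-semi-invariant functions on $\mathfrak u^*\simeq\mathfrak p_u/\mathfrak h_u$. They are distinct positive roots, so the matrix $(\langle\delta_j^\vee,\delta_k\rangle)_{j,k=1,2}$ has determinant $4-\langle\delta_1^\vee,\delta_2\rangle\langle\delta_2^\vee,\delta_1\rangle>0$.

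What is genuinely missing is a correct description of $\rho_i$. Equivalently, you need the hyperplane $\QQ\Lambda_G(G/N_i)$ in $\QQ\Lambda_G(G/H)$, which has to be extracted from Theorem~\ref{thm_degen}(\ref{thm_degen_a}) and Proposition~\ref{prop_h0_typeII}(\ref{prop_h0_typeII_c}). Supplying this is exactly the content of the cited Proposition~5.13. In the example, $\QQ\Lambda_G(G/N_1)=\QQ\alpha_2+\QQ\alpha_3$ and $\QQ\Lambda_G(G/N_2)=\QQ\alpha_1+\QQ\alpha_2$. So $\delta_1$ lies in the second but not the first, which suggests comparing these hyperplanes rather than coroots. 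As it stands, your proposal establishes $\sigma_1\neq\sigma_2$ only through the final citation.
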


All subgroups $H$ satisfying $|\Psi| = 1$ were listed in \cite[\S\,5.6]{Avd_DSS} along with the corresponding sets $\Sigma_G(G/H)$.
We reproduce this information in Theorem~\ref{thm_par1psi1}.

Below we describe the base algorithm, which reduces computing the set $\Sigma_G(G/H)$ to the same problem for several subgroups $H_1,\ldots,H_k$ satisfying $|\Psi(H_1)| = \ldots = |\Psi(H_k)| = 1$.

\medskip

Algorithm~\newalg: \label{alg_ba}

Input: $H$

Step~\step:
if $|\Psi| = 1$, then exit and return~$H$;

Step~\step:
choose $\lambda_1 \in \Psi$ and compute the subgroup~$N(\lambda_1)$;

Step~\step:
choose $\lambda_2 \in \Psi \setminus \lbrace \lambda_1 \rbrace$ and compute the subgroup~$N(\lambda_2)$;

Step~\step:
repeat the procedure for~$N(\lambda_1)$;

Step~\step:
repeat the procedure for~$N(\lambda_2)$.

\medskip

Any output of Algorithm~\ref{alg_ba} is a collection of spherical subgroups $H_1,\ldots,H_k \subset G$ with $|\Psi(H_1)| = \ldots = |\Psi(H_k)| = 1$, which satisfies $\Sigma_G(G/H) = \Sigma_G(G/H_1) \cup \ldots \cup \Sigma_G(G/H_k)$ by Proposition~\ref{prop_sr_union}.

\subsection{The SM-decomposition}

There is a decomposition into a disjoint union
\begin{equation} \label{eqn_SM-dec}
\Psi = \Psi_1 \cup \ldots \cup \Psi_p
\end{equation}
with the following properties:
\begin{itemize}
\item
for every simple factor $F$ of $L'$ acting nontrivially on $\mathfrak u$ there exists a unique $i \in \lbrace 1,\ldots, p \rbrace$ such that $F$ acts trivially on each $\mathfrak g(\mu)$ with $\mu \notin \Psi_i$;

\item
for every $i = 1,\ldots,p$, the saturation of the $L$-module $\mathfrak u^i = \bigoplus \limits_{\mu \in \Psi_i} \mathfrak g(\mu)$ is indecomposable (equivalently, $\mathfrak u^i$ is indecomposable as an $L'$-module).
\end{itemize}
Following~\cite[\S\,6.1]{Avd_DSS}, we call decomposition~(\ref{eqn_SM-dec}) the \textit{SM-decomposition} of~$\Psi$.
Note that the components of this decomposition are uniquely determined up to permutation.
We say that the SM-decomposition of~$\Psi$ is \textit{trivial} if it has exactly one component.

For every $i = 1,\ldots, p$, let $S_i$ be the product of simple factors of $L'$ that act nontrivially on~$\mathfrak u^i$.

Fix $i \in \lbrace 1,\ldots, p \rbrace$ and $\lambda \in \Psi \setminus \Psi_i$.
Put $\delta = \widehat \lambda$, apply the degeneration construction of~\S\,\ref{subsec_degen_add} for~$\lambda$, and consider the resulting subgroup $N = N(\lambda)$.
In what follows, the analogues for $N$ of objects like $\Psi, \mathfrak u, \ldots$ defined for $H$ will be denoted like $\Psi(N), \mathfrak u(N), \ldots$.

Let $\Psi(N) = \Psi_1(N) \cup \ldots \cup \Psi_q(N)$ be the SM-decomposition of~$\Psi(N)$.
In the next proposition, by abuse of notation, $\mathfrak g(\mu_*)$ and $\widehat \mu_*$ denote the corresponding objects defined for~$N$.

\begin{proposition}[{\cite[Proposition~6.3]{Avd_DSS}}] \label{prop_unique_j}
There exists a unique $j \in \lbrace 1,\ldots,q \rbrace$ with the following properties:
\begin{enumerate}[label=\textup{(\arabic*)},ref=\textup{\arabic*}]
\item
$\mathfrak u^i$ shifts to $\mathfrak u^j(N)$ under the degeneration;

\item
$S_j(N) = S_i$;

\item
there is a bijection $\Psi_i \to \Psi_j(N)$, $\mu \mapsto \mu_*$, such that for every $\mu \in \Psi_i$ one has an $S_i$-module isomorphism $\mathfrak g(\mu_*) \simeq \mathfrak g(\mu)$ and $\widehat \mu_* = \widehat \mu - c_\mu \delta$ where $c_\mu \ge 0$.
\end{enumerate}
\end{proposition}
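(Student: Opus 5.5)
The plan is to trace each summand $\mathfrak g(\mu)$, $\mu \in \Psi_i$, through the degeneration defined by~$\lambda$ and use that $\lambda \notin \Psi_i$. Put $\delta = \widehat\lambda$. Since $\lambda \in \Psi_k$ for some $k \ne i$, the root $\delta$ lies in the support of the $L$-module $\mathfrak g(\lambda) \subset \mathfrak u^k$. The simple factors of $L'$ acting nontrivially on $\mathfrak u^k$ lie in $S_k$, which commutes with $S_i$, and $\Supp\delta$ is contained in $\Pi_{S_k}$ together with roots outside $\Pi_L$. The first step is therefore to note that $\delta$ is orthogonal to every root of $S_i$. Consequently $S_i \subset M$, and in fact $S_i \subset M_0$, because $S_i$ fixes $e_\delta$: it acts trivially on $\mathfrak g(\lambda)$ by the definition of the SM-decomposition. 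In particular $\Delta^+_L(\delta) \cap \Delta_{S_i} = \varnothing$.

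Next I would locate $\mathfrak u^i$ inside the decomposition $\mathfrak u = \mathfrak g_\delta \oplus [\mathfrak q^-_u,\mathfrak g_\delta] \oplus \mathfrak u_0$. The summands $\mathfrak g_\delta \oplus [\mathfrak q^-_u,\mathfrak g_\delta]$ lie in $\mathfrak g(\lambda)$, so $\mathfrak u^i \subset \mathfrak u_0$. More precisely, $\mathfrak u^i$ is an $S_i$-submodule, and even an $M$-submodule, of $\mathfrak u_0$. Now apply Proposition~\ref{prop_h0_typeII}(\ref{prop_h0_typeII_c}). It gives an $M_0$-isomorphism $\mathfrak u_0 \simeq \mathfrak u_\infty$ sending a highest-weight vector of weight $\alpha$ to one of weight $\alpha - k_\alpha\delta$. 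This isomorphism is realized by the shifting of $h_\delta$-weight spaces in the modules $V(\alpha)$ described after Proposition~\ref{prop_limitII_prelim}. For each $\mu \in \Psi_i$, the image of $\mathfrak g(\mu)$ is then an $S_i$-stable subspace of $\mathfrak u_\infty$ isomorphic to $\mathfrak g(\mu)$ as an $S_i$-module. Its highest weight is $\widehat\mu - c_\mu\delta$ with $c_\mu = k_{\widehat\mu} \ge 0$. Its $C(N)$-weight is a single $C_N$-root $\mu_*$: the shifted weights differ from $\mu$'s weights only by multiples of $\delta$, and these are constant on a simple $M$-module. Here I use Theorem~\ref{thm_degen}(\ref{thm_degen_a},\ref{thm_degen_b}), which identifies $N$ with Levi subgroup $M$ and $\mathfrak u(N) = \mathfrak u_\infty$. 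This defines the map $\mu \mapsto \mu_*$ and gives property (3) except for bijectivity onto a component.

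The main obstacle is showing that $\{\mu_*\}$ is exactly one SM-component $\Psi_j(N)$ with $S_j(N) = S_i$. For this I would argue as follows. First, the images $\mathfrak g(\mu_*)$ for distinct $\mu$ are distinct, since they are different summands of the direct sum~(\ref{eqn_h_perp_lim}); hence the map is injective. Second, the factors of $M'$ are the factors of $S_i$ together with the factors coming from $S_k$ and the other components, which remain separate. Every simple factor of $M'$ other than those of $S_i$ acts trivially on each $\mathfrak g(\mu_*)$, because it comes from factors acting trivially on $\mathfrak u^i$ or is generated by root vectors orthogonal to the $\widehat\mu$ and to $\delta$. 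Conversely, $S_i$ acts trivially on the images of the other components. Indecomposability of $\bigoplus_{\mu\in\Psi_i}\mathfrak g(\mu_*)$ as an $S_i$-module is inherited from that of $\mathfrak u^i$ through the $S_i$-isomorphism. So these summands form a single component $\Psi_j(N)$ with $S_j(N) = S_i$, which establishes (1) and (2). Uniqueness of $j$ follows because SM-components are disjoint and $S_i$ acts nontrivially on exactly one of them.
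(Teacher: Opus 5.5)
The paper only quotes this proposition from its predecessor and gives no proof, so I assess your argument on its own terms. The skeleton is the natural one: $S_i \subset M_0$ because $S_i$ kills $\mathfrak g(\lambda)$; $\mathfrak u^i \subset \mathfrak u_0$; transport $\mathfrak u^i$ through the degeneration; then do the SM bookkeeping. However, the step that carries all the content is assumed rather than proved.

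You assert that the $M_0$-isomorphism $\mathfrak u_0 \simeq \mathfrak u_\infty$ of Proposition~\ref{prop_h0_typeII}(\ref{prop_h0_typeII_c}) ``is realized by the shifting''. That proposition only gives the existence of some $M_0$-isomorphism, plus a weight condition on highest-weight vectors. It says nothing about where a given subspace of $\mathfrak h^\perp$ shifts, whereas property~(1) is exactly a statement about shifting. Everything you derive next depends on every root vector of $\mathfrak g(\mu)$ being shifted by the same amount: that the shifted image of $\mathfrak g(\mu)$ is $S_i$-stable and $S_i$-isomorphic to $\mathfrak g(\mu)$, that it has a single $C(N)$-weight, that it lies in $\mathfrak p^+_u$ rather than in $\mathfrak l$, and that it is all of $\mathfrak g(\mu_*)$. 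Your phrase ``these are constant on a simple $M$-module'' presupposes this uniformity. Likewise, the claims that the other simple factors of $M'$ kill $\mathfrak g(\mu_*)$, and that $S_i$ kills the images of the other components, are only asserted.

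Here is what is missing.
\begin{itemize}
\item \emph{$S_i$ commutes with the degeneration.} Since $S_i$ acts trivially on $\mathfrak g(\lambda)$, it fixes $e_{\pm\delta}$ and hence centralizes $\mathfrak s(\delta)$. So $\operatorname{ad}(e_{-\delta})$ is $S_i$-equivariant, and $S_i$ preserves the $h_\delta$-eigenspaces and the $\mathfrak s(\delta)$-isotypic components.
\item \emph{All of $\mathfrak g(\mu)$ sits in the same position.} As $\Delta_{S_i} \perp \delta$, all roots $\beta$ of $\mathfrak g(\mu)$ have the same $\langle \beta, \delta^\vee \rangle$. Moreover, $\mathfrak g(\mu)$ is a simple $S_i$-module spanned by root vectors, so it lies in a single isotypic component. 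Hence all $\delta$-strings through roots of $\mathfrak g(\mu)$ have the same length, with $\beta$ in the same position.
\item \emph{The shift is uniform.} Membership of a root vector in $\mathfrak h^\perp$ depends only on its $C$-weight, and along such a string this weight runs through $\mu + m\lambda$ independently of $\beta$. Therefore all of $\mathfrak g(\mu)$ shifts by one $c_\mu$, and its image is $\operatorname{ad}(e_{-\delta})^{c_\mu}\mathfrak g(\mu) \simeq \mathfrak g(\mu)$ as $S_i$-modules.
\item \emph{The image lies in $\mathfrak p^+_u$.} The slots of negative $C$-weight form a bottom block of $\mathfrak h^\perp$ that does not move. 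A slot of $C$-weight $0$ exists only if $\mu \in \ZZ_{>0}\lambda$. In that case the slot of weight $\lambda \in \Psi$ lies between it and $\mu$, and it is this slot that drops into $\mathfrak l$.
\item \emph{The image is all of $\mathfrak g(\mu_*)$.} One has $[e_\gamma, e_{\pm\delta}] = 0$ for all $\gamma \in \Delta_M$, because $\delta$ is $B_L$-highest and $\gamma \perp \delta$; hence $M' \subset M_0$. The simple factors of $M'$ outside $S_i$ lie in factors of $L'$ that kill $\mathfrak g(\mu)$, so they kill $\operatorname{ad}(e_{-\delta})^{c_\mu}\mathfrak g(\mu)$. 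This image is therefore $M$-stable, and by Proposition~\ref{prop_properties_of_g(lambda)}(\ref{prop_properties_of_g(lambda)_a}) applied to $M$ it equals $\mathfrak g(\mu_*)$.
\item \emph{Other components.} The same commutation shows that $S_i$ kills the shifted images of the other components.
\end{itemize}
With these facts in place, injectivity of $\mu \mapsto \mu_*$ and the identification of $\lbrace \mu_* \rbrace$ as one SM-component go through as you intend.

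Smaller points:
\begin{itemize}
\item The claim $\Supp\delta \subset \Pi_{S_k} \cup (\Pi \setminus \Pi_L)$ is false. In type $\mathsf A_3$ with $\Pi_L = \lbrace \alpha_2 \rbrace$, $\Psi = \lbrace \overline\alpha_1, \overline\alpha_1 + \overline\alpha_3 \rbrace$ and $\lambda = \overline\alpha_1 + \overline\alpha_3$, one gets $\delta = \alpha_1 + \alpha_2 + \alpha_3 \ni \alpha_2 \in \Pi_{S_i}$. Orthogonality still follows from your other reason (triviality of the action).
\item Injectivity of $\mu \mapsto \mu_*$ does not hold because the images are ``different summands of~(\ref{eqn_h_perp_lim})''. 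It holds because shifting is injective on root vectors and the image of $\mathfrak g(\mu)$ fills $\mathfrak g(\mu_*)$.
\item Your uniqueness argument fails when $S_i$ is trivial (e.g.\ $\dim \mathfrak u^i = 1$). Uniqueness follows instead from~(1), since SM-components are disjoint.
\end{itemize}
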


\subsection{The optimized algorithm}
\label{ssec_opt_alg}

Let $\Psi = \Psi_1 \cup \ldots \cup \Psi_p$ be the SM-decomposition of~$\Psi$.
The optimized algorithm applies when $p \ge 2$ and rests on the following idea.
Each spherical root of $G/H$ is somehow ``controlled'' by exactly one component of the SM-decomposition of~$\Psi$, and to ``extract'' all spherical roots controlled by a given component~$\Psi_i$ we perform a modification (possibly involving degenerations) of~$H$ to obtain a new spherical subgroup $H_i$ such that the pair $(L,\mathfrak u^i)$ is equivalent to $(L(H_i), \mathfrak u(H_i))$ and the spherical roots of $H_i$ are precisely those of $G/H$ controlled by~$\Psi_i$.
In this way, we obtain a fast algorithm that reduces computing the spherical roots for $H$ to the same problem for several other spherical subgroups for which the SM-decomposition is trivial.

For every $i = 1,\ldots, p$, put
$\Upsilon_i= \lbrace \mu \in \Psi \setminus \Psi_i \mid \Supp \mu \subset \Supp(\Psi_i) \rbrace$ and observe that $\mathfrak l \oplus \bigoplus \limits_{\mu \in \Phi^+ \setminus(\Psi_i \cup \Upsilon_i)} \mathfrak g(-\mu)$ is a subalgebra of~$\mathfrak g$.
Let $\widehat H_i \subset G$ be the corresponding connected subgroup.
Note that $H \subset \widehat H_i$, $L$ is a Levi subgroup of~$\widehat H_i$, $\Psi(\widehat H_i) = \Psi_i \cup \Upsilon_i$, and $\Psi_i$ is a component of the SM-decomposition of $\Psi(\widehat H_i)$.

Given a subset $\Theta \subset \Psi$ and an element $\nu \in \Theta$, we say that $\nu$ is an \textit{upper} element of~$\Theta$ if $\mu - \nu \notin \Phi^+$ for all $\mu \in \Theta \setminus \lbrace \nu \rbrace$.
Observe that every nonempty subset of $\Psi$ contains at least one upper element.

Here is the description of~\cite[Algorithm~D]{Avd_DSS}.

\medskip

Algorithm~\newalg: \label{alg_H_i}

Input: a pair $(H,\Psi_i)$

Step~\step: \label{step_D1}
replace $(H, \Psi_i)$ with $(\widehat H_i, \Psi_i)$;

Step~\step: \label{step_A1}
if $\Upsilon_i = \varnothing$, then exit and return $H$;

Step~\step: \label{step_A2}
choose an upper element $\lambda \in \Upsilon_i$ and compute the subgroup $N = N(\lambda)$;

Step~\step: \label{step_A3}
identify $j$ as in Proposition~\ref{prop_unique_j};

Step~\step: \label{step_D3}
repeat the procedure for the pair $(N, \Psi_j(N))$.

\medskip

For $i = 1,\ldots,p$, let $H_i$ be an output of Algorithm~\ref{alg_H_i} for the pair $(H, \Psi_i)$.
Then $H_i$ is regularly embedded in a parabolic subgroup $P_i \subset G$ such that $P_i \supset B^-$.
Let $L_i$ be the standard Levi subgroup of~$P_i$, which is simultaneously a Levi subgroup of $H_i$.
Then it follows from the description of the algorithm along with Proposition~\ref{prop_unique_j} that the SM-decomposition of $\Psi(H_i)$ is trivial and the pair $(L_i', \mathfrak u(H_i))$ is equivalent to $(L', \mathfrak u^i)$.
The next theorem follows from \cite[Theorem~6.14 and Proposition~6.15]{Avd_DSS}.

\begin{theorem} \label{thm_opt_reduction}
There is a disjoint union $\Sigma_G(G/H) = \Sigma_G(G/H_1) \cup \ldots \cup \Sigma_G(G/H_p)$.
\end{theorem}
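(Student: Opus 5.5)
The plan is to prove the two inclusions separately. I will get $\Sigma_G(G/H_i) \subset \Sigma_G(G/H)$ for each~$i$ by following Algorithm~\ref{alg_H_i} step by step. I will then get the reverse inclusion and disjointness at once by a counting argument based on Proposition~\ref{prop_num_sr}. Throughout, induction is on $|\Psi|$ and, for fixed $|\Psi|$, on the number $p$ of SM-components. The base case $p=1$ is trivial, since then $\Upsilon_1 = \varnothing$, $\widehat H_1 = H$ and $H_1 = H$.

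\emph{Step 1 (the iterative steps of Algorithm~\ref{alg_H_i}).} When $\lambda \in \Upsilon_i$ is an upper element and $N = N(\lambda)$, Theorem~\ref{thm_degen}(\ref{thm_degen_c}) gives $\Sigma_G(G/N) = \Sigma_G(G/\widehat H_i) \setminus \{\sigma_\lambda\}$. By Proposition~\ref{prop_unique_j}, the component $\Psi_i$ is transported to a component $\Psi_j(N)$ with the same $S_i$ and the same modules up to equivalence. Each pass through Steps~A2--A4 therefore only removes spherical roots. So the roots of the final output $H_i$ form a subset of $\Sigma_G(G/\widehat H_i)$. Along the way I must check that the equivalence class of $(L',\mathfrak u^i)$ is preserved, which is exactly Proposition~\ref{prop_unique_j}(3).

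\emph{Step 2 (passing from $H$ to $\widehat H_i$).} This is the step I expect to be the main obstacle, since $\widehat H_i$ is an overgroup of~$H$, not a degeneration of it. What I would try first is to realize $\widehat H_i$, up to normalizers, as the end result of a chain of degenerations of~$H$. The chain should use only $C$-roots $\lambda \in \Psi \setminus (\Psi_i \cup \Upsilon_i)$, taken in the order of upper elements. At each such step Theorem~\ref{thm_degen}(\ref{thm_degen_b}) adds to $\mathfrak h$ the summand $\mathfrak g(-\lambda)$, up to moving weight vectors by multiples of~$\delta$. Since $\Supp\lambda \not\subset \Supp(\Psi_i)$, I expect this shift not to touch $\mathfrak u^i \oplus \bigoplus_{\mu\in\Upsilon_i}\mathfrak g(\mu)$. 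If so, then after reduction of the ambient group (Proposition~\ref{prop_reduction_AG_sph_roots}) the end result has the same $\mathfrak u$ as $\widehat H_i$, and Theorem~\ref{thm_degen}(\ref{thm_degen_c}) again yields $\Sigma_G(G/\widehat H_i) \subset \Sigma_G(G/H)$. Making precise that the shifts coming from $\delta = \widehat\lambda$ do not alter the $\Psi_i\cup\Upsilon_i$ part is the delicate point. I would use Lemma~\ref{lemma_sum} together with the definition of the SM-decomposition: the simple factors $S_i$ act trivially on $\mathfrak g(\lambda)$, which means $(\delta,\Supp\mu)$ is controlled for $\mu \in \Psi_i$.

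\emph{Step 3 (counting).} By Proposition~\ref{prop_num_sr}, $|\Sigma_G(G/H)| = \rk_L \mathfrak u$ and $|\Sigma_G(G/H_i)| = \rk_{L_i}\mathfrak u(H_i)$. Rank is additive on decomposable saturated modules, saturation preserves rank, and $N_G(H)^0 = H$. Together these should give $\rk_L\mathfrak u = \sum_i \rk_{L}\mathfrak u^i$ and $\rk_{L}\mathfrak u^i = \rk_{L_i}\mathfrak u(H_i)$ via the equivalence $(L_i',\mathfrak u(H_i)) \sim (L',\mathfrak u^i)$. It then remains to show that the sets $\Sigma_G(G/H_i)$ are pairwise disjoint. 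For this I would argue that a root surviving in $H_i$ is never the root $\sigma_\lambda$ removed by a degeneration along a $C$-root of~$\Psi_i$, and then apply Proposition~\ref{prop_sr_union}, which says distinct active $C$-roots remove distinct spherical roots. With disjointness, the inclusions of Steps~1--2 together with the equality of cardinalities give the claimed disjoint union.
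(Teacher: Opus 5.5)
\textbf{There is a genuine gap: Steps~1--2 rest on an inclusion that is false, and Step~3 does not prove disjointness.}

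Your Steps~1--2 rely on the chain $\Sigma_G(G/H_i)\subset\Sigma_G(G/\widehat H_i)\subset\Sigma_G(G/H)$. The second inclusion is false in general. Here is a counterexample.
\begin{itemize}
\item Take $G=\SL_6$, $\Pi\setminus\Pi_L=\{\alpha_1,\alpha_3,\alpha_4\}$ and $\Psi=\{\overline\alpha_1+\overline\alpha_3\}\cup\{\overline\alpha_3,\overline\alpha_3+\overline\alpha_4\}$. Then $L'\simeq\SL_2\times\SL_2$, the module $\mathfrak u\simeq\KK\oplus\KK^2\oplus\KK^2\otimes\KK^2$ is saturated and spherical, Lemma~\ref{lemma_sum} holds, and the displayed union is the SM-decomposition.
\item For $\Psi_1=\{\overline\alpha_1+\overline\alpha_3\}$ one gets $\Upsilon_1=\{\overline\alpha_3\}$. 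The two degenerations of $\widehat H_1$ (along $\overline\alpha_1+\overline\alpha_3$ and along $\overline\alpha_3$) give $\Sigma_G(G/\widehat H_1)=\{\alpha_2+\alpha_3,\alpha_1\}$.
\item Now degenerate $H$ itself along the one-dimensional $\mathfrak g(\overline\alpha_1+\overline\alpha_3)$. Nothing shifts, and after reduction of the ambient group this is case~\ref{An5} of Table~\ref{table_A} with $(n,k)=(4,2)$. Hence $\alpha_2,\alpha_3,\alpha_4,\alpha_5\in\Sigma_G(G/H)$.
\item By linear independence of spherical roots, $\alpha_2+\alpha_3\notin\Sigma_G(G/H)$.
\end{itemize}
The reason is that passing to the overgroup cuts the component $\{\overline\alpha_3,\overline\alpha_3+\overline\alpha_4\}$ in half. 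The roots carried by the surviving half are then recomputed as for an isolated module. Only the roots ``controlled'' by $\Psi_i$ (here $\alpha_1$) survive the passage $H\mapsto\widehat H_i$. The spurious ones are exactly what the later degenerations along $\Upsilon_i$ discard.

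Your proposed mechanism for Step~2 also misreads Theorem~\ref{thm_degen} and Proposition~\ref{prop_h0_typeII}. A degeneration along $\lambda$ does not adjoin $\mathfrak g(-\lambda)$ to $\mathfrak h$. The new Levi subgroup is $M$, which is proper in $L$ whenever $\dim\mathfrak g(\lambda)>1$. The rank drops by exactly one, and $\mathfrak u(N)\simeq\mathfrak u_0$ still contains $\mathfrak u_0\cap\mathfrak g(\lambda)$. So no such chain ends at $\widehat H_i$, even up to normalizers and reduction of the ambient group. In the example, $H$ has rank $5$, $\widehat H_1$ has rank $2$, and only one $C$-root lies outside $\Psi_1\cup\Upsilon_1$. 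Step~1 is not self-contained either, because every recursive call of Algorithm~\ref{alg_H_i} starts again with step~\ref{step_D1}, which is again an overgroup replacement.

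Your counting in Step~3 is correct. Saturation preserves rank, and the saturation of $\mathfrak u$ is the product of the saturations of the $\mathfrak u^i$. This gives $|\Sigma_G(G/H)|=\sum_i|\Sigma_G(G/H_i)|$. However, this only reduces the theorem to proving $\Sigma_G(G/H_i)\subset\Sigma_G(G/H)$ for each $i$ together with pairwise disjointness.

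Your disjointness sketch does not work. Proposition~\ref{prop_sr_union} only says that two different degenerations of the same subgroup remove different roots. Since $|\Sigma_G(G/H_i)|=\rk\mathfrak u^i$ can be much larger than $|\Psi_i|\le 2$, the roots of $H_i$ are not the roots $\sigma_\lambda$, $\lambda\in\Psi_i$, removed from $H$. Nothing in your sketch prevents a root of $H_j$ from also being a root of $H_i$.

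The missing idea is an intrinsic attribution of each $\sigma\in\Sigma_G(G/H)$ to exactly one SM-component. This attribution must be invariant under two operations:
\begin{itemize}
\item the overgroup step $H\mapsto\widehat H_i$, for the roots attributed to $\Psi_i$;
\item degenerations along upper elements of $\Upsilon_i$, compatibly with the transport $\Psi_i\to\Psi_j(N)$ of Proposition~\ref{prop_unique_j}.
\end{itemize}
The paper proves none of this itself. It describes the mechanism as each root being ``controlled'' by exactly one component, and obtains the theorem from \cite[Theorem~6.14 and Proposition~6.15]{Avd_DSS}.
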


\begin{remark}
Being an output of Algorithm~\ref{alg_H_i}, the subgroup~$H_i$ depends on the sequence of choices of~$\lambda$ at each execution of step~\ref{step_A2}.
We conjecture that the pair $(G_i,K_i)$ obtained from $(G,H_i)$ by reduction of the ambient group does not depend of this sequence of choices and hence is uniquely determined by the pair $(H, \Psi_i)$.
\end{remark}

\section{Classification of cases with trivial SM-decomposition}
\label{sect_comp_SR}

Retain the setting and notation of~\S\,\ref{ssec_setting}.
In this section, we present the classification of all cases where $H$ is spherical and the SM-decomposition of the set $\Psi$ is trivial.
Recall that in this case one has $|\Psi| \le 2$ by the classification of spherical modules.
Thanks to the reduction of the ambient group (see \S\,\ref{subsec_reduction_AG}), we may restrict ourselves to the case $\Supp \Psi = \Pi$, which will be assumed throughout.

\subsection{Preliminary results}
\label{subsec_primitive_cases}

In this subsection we state several necessary conditions implied by the fact that the SM-decomposition of~$\Psi$ is trivial.
In particular, we find that $G$ is necessarily simple.

For every $\lambda \in \Psi$, let $\widetilde \lambda$ denote the lowest weight of the $L$-module $\mathfrak g(\lambda)$.

First recall the following well-known lemma.

\begin{lemma} \label{lemma_aux}
For every $\alpha \in \Delta^+ \setminus \Pi$ there exists $\beta \in \Pi$ such that $(\alpha,\beta) > 0$.
In particular, $\alpha - \beta \in \Delta^+$.
\end{lemma}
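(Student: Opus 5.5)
The plan is to use the standard inner-product argument for positive roots. Write $\alpha = \sum_{\gamma \in \Pi} k_\gamma \gamma$ with all $k_\gamma \ge 0$. Since $(\cdot\,,\cdot)$ is positive definite on $\QQ\mathfrak X(T) \supset \QQ\Delta$ and $\alpha \ne 0$, we get
\[
0 < (\alpha,\alpha) = \sum_{\gamma \in \Pi} k_\gamma (\alpha,\gamma).
\]
All coefficients $k_\gamma$ are nonnegative, so at least one term on the right is positive. Hence there is $\beta \in \Pi$ with $k_\beta > 0$ and $(\alpha,\beta) > 0$. This proves the first assertion, and in fact gives slightly more: $\beta$ can be taken in $\Supp \alpha$.

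For the second assertion, I will invoke the standard fact about root systems that $(\alpha,\beta) > 0$ for roots $\alpha \ne \beta$ implies $\alpha - \beta \in \Delta$. This follows from $\beta^\vee(\alpha) > 0$ by looking at the $\beta$-string through $\alpha$: we have $s_\beta(\alpha) = \alpha - \beta^\vee(\alpha)\beta \in \Delta$, and the $\beta$-string through $\alpha$ is unbroken, so it contains $\alpha - \beta$. Here $\alpha \ne \beta$ holds automatically because $\alpha \notin \Pi$.

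It remains to check that $\alpha - \beta$ is positive. Every root has all simple-root coefficients of the same sign. Since $\alpha \notin \Pi$, we have $\operatorname{ht}\alpha \ge 2$, so $\alpha - \beta$ has at least one positive coefficient. This is the one place where the hypothesis $\alpha \notin \Pi$ is used essentially: if $\alpha$ were simple, the only possible choice would be $\beta = \alpha$, and then $\alpha - \beta = 0$ is not a root. Consequently $\alpha - \beta \in \Delta^+$.

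I expect no real obstacle here; the only delicate point is the positivity of $\alpha - \beta$ just discussed.
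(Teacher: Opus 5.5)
Your proof is correct and complete. It is the standard textbook argument: positivity of $(\alpha,\alpha)=\sum_\gamma k_\gamma(\alpha,\gamma)$ gives $\beta$, then $\langle \beta^\vee,\alpha\rangle>0$ forces $\alpha-\beta\in\Delta$, and a height count gives positivity. The paper gives no proof of its own; it simply recalls this lemma as well known.

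Two ingredients are used implicitly, and both hold here. First, $(\alpha,\beta)$ and $\langle\beta^\vee,\alpha\rangle$ have the same sign because the inner product is Weyl-invariant. Second, $\Delta$ is reduced, so $\alpha\ne\beta$ already rules out $\alpha$ being proportional to $\beta$ in the root-string step.
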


\begin{lemma} \label{lemma_alpha_in_Pi}
Suppose that $\Psi \ne \varnothing$.
Then there exists $\alpha \in \Pi \setminus \Pi_L$ such that $\overline \alpha \in \Psi$.
\end{lemma}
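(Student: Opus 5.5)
The plan is to pick an element of $\Psi$ minimal in the natural partial order on $\Phi^+$ and descend to a simple root using Lemma~\ref{lemma_sum} and Lemma~\ref{lemma_aux}. Since $\Psi \ne \varnothing$, choose $\lambda \in \Psi$ and a root $\gamma \in \Delta^+ \setminus \Delta_L$ with $\overline\gamma = \lambda$. Among all such pairs $(\lambda,\gamma)$ with $\overline\gamma \in \Psi$, take one with $\operatorname{ht}\gamma$ minimal. We will show that $\gamma \in \Pi$; then $\alpha = \gamma$ lies in $\Pi\setminus\Pi_L$ (as $\gamma\notin\Delta_L$) and satisfies $\overline\alpha \in \Psi$.

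Suppose instead that $\gamma \notin \Pi$. By Lemma~\ref{lemma_aux} there is $\beta \in \Pi$ with $\gamma - \beta \in \Delta^+$. There are two cases.

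\emph{Case $\beta \in \Pi_L$.} Then $\overline\beta = 0$, so $\overline{\gamma-\beta} = \lambda$. Moreover $\gamma - \beta \notin \Delta_L$, since otherwise $\gamma$ would lie in $\Delta_L$. Thus $(\lambda, \gamma-\beta)$ is an admissible pair of smaller height, which contradicts minimality.

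\emph{Case $\beta \notin \Pi_L$.} Then $\mu := \overline\beta \in \Phi^+$. Put $\nu := \overline{\gamma-\beta}$. If $\gamma-\beta \in \Delta_L$, then $\nu = 0$ and $\mu = \lambda \in \Psi$, so $(\lambda,\beta)$ is admissible of smaller height, again a contradiction. Otherwise $\nu \in \Phi^+$ and $\lambda = \mu + \nu$. Lemma~\ref{lemma_sum} then gives $\mu \in \Psi$ or $\nu \in \Psi$, and either $(\mu,\beta)$ or $(\nu,\gamma-\beta)$ is admissible with height below $\operatorname{ht}\gamma$, a contradiction.

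Positivity is needed here: for $\gamma \in \Delta^+\setminus\Delta_L$ we have $\overline\gamma \in \Phi^+$, where $\Phi^+ = \varepsilon(\Delta^+ \setminus \Delta_L)$ is the positive part of $\Phi$ with respect to $B$. This holds because the coefficients at $\Pi\setminus\Pi_L$ are nonnegative and not all zero. The argument is elementary and I expect no real obstacle. The only point to check is that Lemma~\ref{lemma_sum} is applied with $\mu,\nu \in \Phi^+$, which the case split above ensures.
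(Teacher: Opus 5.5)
Your proof is correct and follows the same route as the paper: take a root of minimal height lying over $\Psi$, use Lemma~\ref{lemma_aux} to split off a simple root $\beta$, and apply Lemma~\ref{lemma_sum} to reach a contradiction. The difference is cosmetic. The paper minimizes $\operatorname{ht}\widetilde\lambda$ over the lowest weights of the modules $\mathfrak g(\lambda)$, and uses the lowest-weight property to exclude $\beta\in\Pi_L$ and $\gamma-\beta\in\Delta_L$. You instead minimize over all roots $\gamma$ with $\overline\gamma\in\Psi$ and dispose of those two cases directly by height minimality.
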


\begin{proof}
Choose $\lambda \in \Psi$ with minimal possible~$\operatorname{ht} \widetilde \lambda$ and put $\alpha = \widetilde \lambda$ for short.
Assume that $\alpha \notin \Pi$.
Then by Lemma~\ref{lemma_aux} there is $\beta \in \Pi$ such that $\gamma = \alpha - \beta \in \Delta^+$.
As $\alpha$ is the lowest weight of~$\mathfrak g(\lambda)$, one has $\Supp \beta \not\subset \Pi_L$, hence $\overline \beta, \overline\gamma \ne 0$.
Then by Lemma~\ref{lemma_sum} there is $\mu \in \lbrace \overline \beta, \overline \gamma \rbrace$ such that $\mu \in \Psi$.
Clearly, $\operatorname{ht} \widetilde \mu < \operatorname{ht} \alpha$, a contradiction.
Thus $\alpha \in \Pi$; moreover, $\alpha \in \Pi \setminus \Pi_L$ since $\overline \alpha = \lambda \ne 0$.
\end{proof}

The next result readily follows from Lemma~\ref{lemma_alpha_in_Pi}.

\begin{proposition} \label{prop_|Psi|=1}
Suppose that $H$ is spherical and $|\Psi| = 1$.
Then the following assertions hold.
\begin{enumerate}[label=\textup{(\alph*)},ref=\textup{\alph*}]
\item
$G$ is simple.

\item
There is $\alpha \in \Pi$ such that $\Pi \setminus \Pi_L = \lbrace \alpha \rbrace$.

\item
$\Psi = \lbrace \overline \alpha \rbrace$.
\end{enumerate}
\end{proposition}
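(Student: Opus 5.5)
By Lemma~\ref{lemma_alpha_in_Pi} there is a simple root $\alpha \in \Pi \setminus \Pi_L$ with $\overline\alpha \in \Psi$. Since $|\Psi| = 1$, this forces $\Psi = \lbrace \overline\alpha \rbrace$, which is~(c). The rest of the argument uses the standing assumption $\Supp \Psi = \Pi$ (made after the reduction of the ambient group, \S\,\ref{subsec_reduction_AG}). Here $\Supp \Psi = \Supp \widehat{\overline\alpha}$ is the union of supports of the highest weight of $\mathfrak g(\overline\alpha)$.

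For~(b), I will determine the weights of $\mathfrak g(\overline\alpha)$. This module is spanned by the $e_\gamma$ with $\gamma \in \Delta$ and $\overline\gamma = \overline\alpha$. Since $\alpha$ is simple and $\overline\alpha \ne 0$, each such $\gamma$ has the form $\alpha + \sum_{\beta \in \Pi_L} k_\beta \beta$ with $k_\beta \ge 0$. Here I use that $\overline{\beta'}$ for the simple roots $\beta' \in \Pi \setminus \Pi_L$ are linearly independent in $\QQ\mathfrak X(C)$, and that $\alpha$ has coefficient~$1$ in~$\gamma$, so no other simple root outside $\Pi_L$ can occur. Consequently $\Supp \widehat{\overline\alpha} \subset \Pi_L \cup \lbrace \alpha \rbrace$. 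Combined with $\Supp\Psi = \Pi$, this gives $\Pi \subset \Pi_L \cup \lbrace\alpha\rbrace$, hence $\Pi \setminus \Pi_L = \lbrace \alpha \rbrace$.

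For~(a), note that $\widehat{\overline\alpha}$ is a root, so its support is connected in the Dynkin diagram. Thus $\Pi = \Supp \widehat{\overline\alpha}$ is connected, and since $G$ is semisimple (also a standing assumption), $G$ is simple. The only point needing care is the justification of the independence claim in~(b): that $\overline\gamma = \overline\alpha$ forces the coefficients of $\gamma$ at simple roots outside $\Pi_L$ to be those of $\alpha$. I will handle it by noting that the restriction to $C$ identifies $\QQ\mathfrak X(C)$ with the quotient of $\QQ\Pi$ by $\QQ\Pi_L$, for $G$ semisimple. This step is routine.
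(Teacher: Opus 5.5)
Your proof is correct and takes the same route as the paper, which only remarks that the proposition ``readily follows from Lemma~\ref{lemma_alpha_in_Pi}'' under the standing assumptions that $G$ is semisimple and $\Supp \Psi = \Pi$. You supply the omitted details: (c) comes directly from the lemma; (b) uses that the kernel of $\QQ\Pi \to \QQ\mathfrak X(C)$ is $\QQ\Pi_L$, so $\Supp \widehat{\overline\alpha} \subset \Pi_L \cup \lbrace \alpha \rbrace$; and (a) uses that the support of the root $\widehat{\overline\alpha}$ is connected.
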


\begin{proposition} \label{prop_|Psi|=2}
Suppose that $H$ is spherical and $|\Psi| = 2$.
Then the following assertions hold.
\begin{enumerate}[label=\textup{(\alph*)},ref=\textup{\alph*}]
\item \label{prop_|Psi|=2_a}
If the SM-decomposition of~$\Psi$ is trivial, then $G$ is simple.

\item \label{prop_|Psi|=2_b}
$|\Pi \setminus \Pi_L| \le 2$.

\item \label{prop_|Psi|=2_c}
There is $\alpha \in \Pi \setminus \Pi_L$ such that $\overline \alpha \in \Psi$.
\end{enumerate}
\end{proposition}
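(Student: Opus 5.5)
We prove the three assertions in the order (c), (b), (a), since (c) is the basic input for the others.

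Assertion~(c) is immediate from Lemma~\ref{lemma_alpha_in_Pi}, because $\Psi \ne \varnothing$.

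For~(b), write $\Psi = \lbrace \lambda_1, \lambda_2 \rbrace$ and use the standing assumption $\Supp \Psi = \Pi$, which means that every simple root lies in the support of some element of $\Psi$. Choose $\alpha$ as in~(c), say $\overline\alpha = \lambda_1$. If $\lambda_2 = \overline\beta$ for some $\beta \in \Pi \setminus \Pi_L$, then $\lambda_1,\lambda_2$ are restrictions of simple roots. Hence every root in $\mathfrak g(\lambda_1) \oplus \mathfrak g(\lambda_2)$ has support in $\Pi_L \cup \lbrace \alpha, \beta \rbrace$. It follows that $\Pi \setminus \Pi_L \subset \lbrace \alpha,\beta \rbrace$.

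Otherwise $\lambda_2$ is not the restriction of a simple root. Its lowest weight $\widetilde\lambda_2$ is then not simple, so by Lemma~\ref{lemma_aux} we can write $\widetilde\lambda_2 = \beta + \gamma$ with $\beta \in \Pi$ and $\gamma \in \Delta^+$. Exactly as in the proof of Lemma~\ref{lemma_alpha_in_Pi}, both $\overline\beta$ and $\overline\gamma$ are nonzero. By Lemma~\ref{lemma_sum}, one of $\overline\beta,\overline\gamma$ lies in $\Psi$, and it has strictly smaller height than $\lambda_2$, so it cannot equal $\lambda_2$. Therefore it equals $\lambda_1 = \overline\alpha$.
\begin{itemize}
\item If $\overline\beta = \overline\alpha$, then $\beta = \alpha$.
\item If $\overline\gamma = \overline\alpha$, then $\overline\beta = \lambda_2 - \lambda_1$ must still be nonzero, with $\beta \notin \Pi_L$.
\end{itemize}
In either case $\lambda_2 = \lambda_1 + \overline\beta'$, where $\beta'$ is a single simple root outside $\Pi_L$ or else $\lambda_2 = 2\lambda_1$. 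The latter case is where I would iterate the height-reduction argument to check that the coefficients in $\lambda_2$ of simple roots outside $\Pi_L$ involve at most one root besides $\alpha$. So $\lambda_2$ is supported, outside $\Pi_L$, on at most $\alpha$ and one further simple root, and again $|\Pi \setminus \Pi_L| \le 2$.

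For~(a), suppose $G = G_1 \times G_2$ with both factors nontrivial. Since $\Supp \Psi = \Pi$ and each $\mathfrak g(\lambda)$ is a simple $L$-module (Proposition~\ref{prop_properties_of_g(lambda)}(\ref{prop_properties_of_g(lambda)_a})), each $\mathfrak g(\lambda_i)$ lies in a single $\mathfrak g_j$. Both factors must be hit, so $\mathfrak g(\lambda_1) \subset \mathfrak g_1$ and $\mathfrak g(\lambda_2) \subset \mathfrak g_2$. Then $L = L_1 \times L_2$ with $L_j \subset G_j$, and $L_j$ acts trivially on $\mathfrak g(\lambda_{3-j})$. This makes $(L,\mathfrak u)$ equivalent to $(L_1 \times L_2, \mathfrak g(\lambda_1) \oplus \mathfrak g(\lambda_2))$, and the same holds after saturation. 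Hence the SM-decomposition has two components, contradicting triviality.

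I expect the main obstacle to be~(b), specifically bounding how many simple roots outside $\Pi_L$ can occur in $\lambda_2$. If the descent argument above turns out to be inadequate, I would use sphericity instead. A $C$-root involving three or more new simple roots would force a rank of $\mathfrak u$ incompatible with the at-most-two-summand classification of spherical modules, and I would check this via Algorithm~\ref{alg_sph_mod} and Proposition~\ref{prop_sph_lin_ind}.
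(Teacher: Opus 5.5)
\textbf{Parts (a) and (c).} These are fine and agree with the paper. Part (c) is Lemma~\ref{lemma_alpha_in_Pi}, and your argument for (a) spells out the paper's one-line remark that each simple factor of $L'$ acts nontrivially on only one simple ideal of $\mathfrak g$.

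\textbf{The gap is in (b).} Suppose the simple root supplied by Lemma~\ref{lemma_aux} is $\beta=\alpha$. Then you only get $\lambda_2=\overline\alpha+\overline\gamma$ with $\gamma$ an \emph{arbitrary} positive root. So your claim that ``in either case $\lambda_2=\lambda_1+\overline\beta'$ with $\beta'$ simple'' is false in this case. A priori $\overline\gamma$ may involve many simple roots outside $\Pi_L$. Since $\overline\gamma\notin\Psi$ (unless $\lambda_2=2\lambda_1$), Lemma~\ref{lemma_sum} says nothing more about it, and the descent cannot be iterated. You also cannot always dodge this case by picking $\beta\ne\alpha$: a root may have positive inner product with only one simple root (e.g.\ the highest root of $\mathsf D_4$ and $\alpha_2$). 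By contrast, the cases $\overline\gamma=\overline\alpha$ and $\lambda_2=2\lambda_1$, which you flagged as the delicate ones, are already complete.

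\textbf{The missing idea.} You need a decomposition of $\widetilde\lambda_2$ into two positive roots whose supports \emph{both} meet $\Pi_0=\Pi\setminus(\Pi_L\cup\lbrace\alpha\rbrace)$. If $|\Pi_0|\ge2$, neither restriction can then equal $\overline\alpha$ or $\lambda_2$, which contradicts Lemma~\ref{lemma_sum}. The paper obtains this decomposition as follows.
\begin{enumerate}
\item Start at $\delta_1=\widetilde\lambda_2$, whose support contains $\Pi_0$ because $\Supp\Psi=\Pi$.
\item Form a chain of simple reflections $\delta_{i+1}=r_{\beta_i}(\delta_i)$ with $(\delta_i,\beta_i)>0$, stopping at the first $\beta_k\in\Pi_0$.
\item Pull back $\delta_k=\beta_k+(\delta_k-\beta_k)$ by $r_{\beta_1}\cdots r_{\beta_{k-1}}$. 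This map does not change coefficients on $\Pi_0$.
\item The result is $\widetilde\lambda_2=\beta+\gamma$ with $\Supp\beta\cap\Pi_0=\lbrace\beta_k\rbrace$ and $\Pi_0\setminus\lbrace\beta_k\rbrace\subset\Supp\gamma$.
\end{enumerate}

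\textbf{The sphericity fallback.} This cannot substitute for the argument above, because (b) comes from $\mathfrak h$ being a subalgebra, not from sphericity. Take $G=\SL_4$ and $L=T$. The $T$-module with weights $\alpha_1$ and $\alpha_1+\alpha_2+\alpha_3$ is spherical, its support is $\Pi$, and $|\Pi\setminus\Pi_L|=3$. It is excluded only because the decomposition $(\alpha_1+\alpha_2)+\alpha_3$ violates Lemma~\ref{lemma_sum}. Moreover, the two-summand bound you invoke applies to strictly indecomposable modules, while (b) carries no such hypothesis.
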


\begin{proof}
Since $\mathfrak u$ is a strictly indecomposable $L$-module and every simple factor of $L'$ can act nontrivially only on one simple ideal of~$\mathfrak g$, we get~(\ref{prop_|Psi|=2_a}).
Part~(\ref{prop_|Psi|=2_c}) follows from Lemma~\ref{lemma_alpha_in_Pi}.
It remains to prove~(\ref{prop_|Psi|=2_b}).
Put $\Pi_0 = \Pi \setminus (\Pi_L \cup \alpha)$ and assume that $|\Pi_0| \ge 2$.
Let $\lambda$ be the unique element of $\Psi \setminus \lbrace \overline \alpha \rbrace$ and put $\delta_1 = \widetilde \lambda$.
Since $\Supp \Psi = \Pi$, we have $\Pi_0 \subset \Supp \delta_1$.
Thanks to Lemma~\ref{lemma_aux}, there is $\beta_1 \in \Pi$ such that $(\delta_1, \beta_1) > 0$.
Let $r_1$ be the reflection corresponding to~$\beta_1$ and put $\delta_2 = r_1(\delta_1) = \delta_1 - \langle \beta_1^\vee, \delta_1\rangle \beta_1$.
Then $\Supp \delta_1 \setminus \lbrace \beta_1 \rbrace \subset \Supp \delta_2 \subset \Supp \delta_1$.
Iterating this procedure we construct a sequence $\delta_1, \ldots, \delta_m \in \Delta^+$ along with a sequence $\beta_1,\ldots,\beta_{m-1} \in \Pi$ such that $\delta_m \in \Pi$ and for all $i = 1,\ldots,m-1$ one has $(\delta_i, \beta_i) > 0$, $\delta_i - \beta_i \in \Delta^+$, and $\delta_{i+1} = r_i(\delta_i)$ where $r_i$ is the reflection corresponding to~$\beta_i$.
Clearly, there exists a minimal $k$ such that $\beta_k \in \Pi_0$.
Since $\delta_k - \beta_k \in \Delta^+$, we have $\delta_1 = \beta + \gamma$ where $\beta, \gamma \in \Delta^+$, $\beta = r_1(\ldots r_{k-1}(\beta_k)), \gamma = r_1(\ldots r_{k-1}(\delta_k - \beta_k))$ for $k \ge 2$ and $\beta = \beta_1, \gamma = \delta_1 - \beta_1$ for $k=1$.
Observe that $\Supp \beta \cap \Pi_0 = \lbrace \beta_k \rbrace$ and $\Pi_0 \setminus \lbrace \beta_k \rbrace \subset \Supp \gamma$, which implies $\overline \alpha \notin \lbrace \overline \beta, \overline \gamma \rbrace$.
Since $\delta_1$ is the lowest weight vector in~$\mathfrak g(\lambda)$, we have $\lambda \notin \lbrace \overline \beta, \overline \gamma \rbrace$.
On the other hand, Lemma~\ref{lemma_sum} yields $\lbrace \overline \beta, \overline \gamma \rbrace \cap \Psi \ne \varnothing$, a contradiction.
Thus $|\Pi_0| \le 1$ and the proof of part~(\ref{prop_|Psi|=2_b}) is completed.
\end{proof}

\subsection{Statement of the main results}

In this subsection, we state the classification of all cases where $H$ is spherical and the SM-decomposition of~$\Psi$ is trivial.
Theorem~\ref{thm_par1psi1} reproduces~\cite[Theorem~5.15]{Avd_DSS} and lists all cases with~$|\Psi|=1$.
The main results of this paper are Theorems~\ref{thm_par1psi2} and~\ref{thm_par2psi2}, which provide a classification of all cases with $|\Psi| = 2$.

In view of the necessary conditions of Proposition~\ref{prop_|Psi|=1}, the next theorem classifies all cases where $H$ is spherical and~$|\Psi|=1$.
It also provides the corresponding sets of spherical roots in each case.

\begin{theorem} \label{thm_par1psi1}
Suppose that $G$ is simple of rank~$n$ and Dynkin type~$\mathsf X_n$, $\Pi = \lbrace \alpha_1, \ldots, \alpha_n \rbrace$, $\Pi_L = \Pi \setminus \lbrace \alpha_k \rbrace$ for some~$k \in \lbrace 1,\ldots,n\rbrace$, and $\Psi = \lbrace \overline \alpha_k \rbrace$.
Then the following assertions hold.
\begin{enumerate}[label=\textup{(\alph*)},ref=\textup{\alph*}]
\item \label{thm_par1psi1_a}
$H$ is a spherical subgroup of $G$ if and only if, up to an automorphism of the Dynkin diagram of~$G$, the pair $(\mathsf X_n, k)$ appears in Table~\textup{\ref{table_par1psi1}}.

\item \label{thm_par1psi1_b}
For each pair $(\mathsf X_n, k)$ listed in Table~\textup{\ref{table_par1psi1}} the set $\Sigma_G(G/H)$ is given in the last column of that table.
\end{enumerate}
\end{theorem}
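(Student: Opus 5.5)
Since $\Pi_L = \Pi \setminus \lbrace \alpha_k \rbrace$ and $\Psi = \lbrace \overline \alpha_k \rbrace$, the subgroup $H$ is determined by $k$: it is $L \rightthreetimes H_u$ with $\mathfrak h_u = \bigoplus_{\mu \in \Phi^+ \setminus \lbrace \overline\alpha_k \rbrace} \mathfrak g(-\mu)$. Hence $\mathfrak p_u/\mathfrak h_u \simeq \mathfrak g(-\overline\alpha_k)$, and $\mathfrak u = \mathfrak g(\overline\alpha_k)$ is the simple $L$-module whose lowest weight is $\alpha_k$. Note that $\mathfrak h$ is indeed a subalgebra, because $\overline\alpha_k$ cannot be written as a sum of two elements of $\Phi^+$. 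I would prove parts (a) and (b) separately, by a case-by-case check over the simple types $\mathsf X_n$.

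For part~(\ref{thm_par1psi1_a}) I would use Proposition~\ref{prop_criterion_spherical}. It reduces sphericity of $H$ to sphericity of the simple $L$-module $\mathfrak g(\overline\alpha_k)$, or equivalently of its dual. The semisimple part of $L$ is read off from the Dynkin diagram with the node $\alpha_k$ removed. The module is the tensor product, over the components adjacent to $\alpha_k$, of the fundamental representations attached to the neighbours of $\alpha_k$; when $\alpha_k$ meets a component along a multiple bond, the corresponding weight is scaled accordingly. The connected center $C$ is one-dimensional and acts by a nontrivial scalar. I would then compare this module with Kac's list of simple spherical modules~\cite{Kac}, taken up to equivalence and with the central torus present. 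For the exceptional types, and for borderline classical cases, Algorithm~\ref{alg_sph_mod} together with Proposition~\ref{prop_sph_lin_ind} gives a mechanical test. For example, when $\alpha_k$ is a branch node with three neighbours, the module is a triple tensor product, which is never spherical. This step yields exactly the rows of Table~\ref{table_par1psi1}.

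For part~(\ref{thm_par1psi1_b}) I would first compute the number of spherical roots, $|\Sigma_G(G/H)| = \rk_L(\mathfrak u)$, by Proposition~\ref{prop_num_sr}; the rank is taken from Knop's tables~\cite[\S\,5]{Kn98} using the saturation. Next I would apply the Remark after Proposition~\ref{prop_criterion_spherical}. The spherical roots of the $L$-module $\mathfrak u$, read off from~\cite{Kn98}, are exactly those elements of $\Sigma_G(G/H)$ with support in $\Pi_L$, and every remaining spherical root is a positive root of $G$ whose support contains $\alpha_k$. Proposition~\ref{prop_h0_typeII} together with Theorem~\ref{thm_degen} applied to $\lambda = \overline\alpha_k$ (here $\delta = \widehat\lambda$) also helps: the degeneration $N(\lambda)$ has one spherical root fewer, and for $N(\lambda)$ the data $(M,\mathfrak u_\infty)$ are explicit. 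In practice I would take the missing roots from known lists: for $H$ with $K = L$ and $\mathfrak u$ simple, $G/H$ is a parabolic induction or a known rank-small spherical space. Alternatively, I would pin them down by the weight lattice $\Lambda_G(G/H) = \Lambda_L(\mathfrak p_u/\mathfrak h_u)$ (Proposition~\ref{prop_criterion_spherical}). Since $N_G(H)/H$ is finite, Proposition~\ref{prop_finite_index_in_norm} says that $\Sigma_G(G/H)$ is a basis of $\QQ\Lambda_G(G/H)$. Combined with the requirement that the extra roots lie in $\Delta^+ \setminus \Delta^+_L$ and are primitive in the lattice, this typically leaves a single candidate.

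I expect the main obstacle to be this identification of the spherical roots whose support meets $\alpha_k$, in the rows where more than one such root is needed, or where the linear-algebra constraints do not isolate a unique positive root. There I would iterate the degeneration: compute $\mathfrak h_\infty$ explicitly by the shifting rule of Proposition~\ref{prop_limitII_prelim}, identify $N$ through its own active $C$-roots, and recurse. Since this theorem reproduces~\cite[Theorem~5.15]{Avd_DSS}, I would cross-check every row against that source.
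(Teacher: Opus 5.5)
\textbf{Part (a).} Your argument is fine. It is the standard one: reduce via Proposition~\ref{prop_criterion_spherical} to the simple module $\mathfrak g(\overline\alpha_k)$ with $C$ acting by scalars, then run through Kac's list. The paper gives no proof of its own here; it quotes \cite[Theorem~5.15]{Avd_DSS}, where the spherical roots are taken from the literature.

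\textbf{Part (b).} This part does not work as a self-contained argument, for three reasons.

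First, the Remark after Proposition~\ref{prop_criterion_spherical} concerns $\mathfrak p_u/\mathfrak h_u\simeq\mathfrak u^*$, not $\mathfrak u$. The module roots of $\mathfrak u$ and of $\mathfrak u^*$ differ by $-w_{0,L}$, where $w_{0,L}$ is the longest element of the Weyl group of $L$. For $(\mathsf A_4,2)$ one has $\Lambda_G(G/H)=\ZZ(\alpha_1+\alpha_2+\alpha_3+\alpha_4)\oplus\ZZ(\alpha_2+\alpha_3)$ and $\Sigma_L(\mathfrak p_u/\mathfrak h_u)=\{\alpha_1+\alpha_4\}$. Using $\mathfrak u$ instead would give $\alpha_1+\alpha_3$, which is not even in $\Lambda_G(G/H)$.

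Second, in the same example your lattice test does not decide. Both $\alpha_2+\alpha_3$ and $\alpha_1+\alpha_2+\alpha_3+\alpha_4$ are primitive in $\Lambda_G(G/H)$ and lie in $\Delta^+\setminus\Delta^+_L$, and each completes $\alpha_1+\alpha_4$ to a basis. The same happens with $\alpha_1$ versus $\alpha_1+2\alpha_2+\dots+2\alpha_n$ for $(\mathsf B_n,1)$.

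Third, iterating the degeneration does not close this by itself. With $|\Psi|=1$ there is only one degeneration, so Proposition~\ref{prop_sr_union} is unavailable. Theorem~\ref{thm_degen}(\ref{thm_degen_c}) only says that $\Sigma_G(G/N)$ is $\Sigma_G(G/H)$ minus an \emph{unidentified} root. If that root were the one meeting $\alpha_k$, no recursion on $N$ would ever see it.

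Your worry about rows needing several such roots is unfounded. By Knop's tables the rank of $\mathfrak u$ exceeds the number of module roots by exactly one, so exactly one spherical root $\sigma_0$ has $\alpha_k$ in its support.

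\textbf{The missing step.} You need to show that for $r=|\Sigma_G(G/H)|\ge 2$ the degeneration along $\overline\alpha_k$ loses a module root.
\begin{itemize}
\item If $\mathfrak u_0=0$, then $\mathfrak u_\infty=0$, so $N=R$ and $r=1$.
\item Otherwise $\Lambda_G(G/N)=\Lambda_M(\mathfrak u_\infty^*)$ contains the highest weights of $\mathfrak u_\infty\subset\mathfrak p^+_u$, and these have positive $\alpha_k$-coefficient.
\item $\Sigma_G(G/N)$ is a basis of $\QQ\Lambda_G(G/N)$ by Proposition~\ref{prop_num_sr} applied to $N$. Hence some spherical root of $G/N$ meets $\alpha_k$.
\item Since $\Sigma_G(G/N)\subset\Sigma_G(G/H)$, that root must be $\sigma_0$.
\end{itemize}
Therefore $\Sigma_G(G/H)=\Sigma_G(G/N)\cup\Sigma_L(\mathfrak p_u/\mathfrak h_u)$. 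You then compute $\Sigma_G(G/N)$ by induction on $r$, going through the base algorithm when $|\Psi(N)|=2$, as happens for $(\mathsf C_n,2)$.

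For $r=1$ your lattice argument does suffice, since the spherical root is the positive generator of $\Lambda_G(G/H)$.

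In the $(\mathsf A_4,2)$ example, $\delta=\alpha_1+\dots+\alpha_4$ and $\mathfrak u_\infty=\mathfrak g_{\alpha_2}\oplus\mathfrak g_{\alpha_2+\alpha_3}$. This gives $\Sigma_G(G/N)=\{\alpha_2+\alpha_3\}$, which together with $\alpha_1+\alpha_4$ yields the entry of Table~\ref{table_par1psi1}.
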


\begin{table}[h]
\caption{Cases with $|\Pi \setminus \Pi_L| = 1$ and $|\Psi|=1$} \label{table_par1psi1}

\begin{center}
\renewcommand{\tabcolsep}{3pt}%
\begin{tabular}{|c|l|l|c|l|}
\hline
No. & \multicolumn{1}{|c|}{$(\mathsf X_n, k)$} & \multicolumn{1}{|c|}{($L', \mathfrak u$)} & $\rk$ & \multicolumn{1}{|c|}{$\Sigma_G(G/H)$} \\

\hline
\no\label{No1} &
\renewcommand{\tabcolsep}{0pt}%
\begin{tabular}{l}
($\mathsf A_{n},k$), \\
$n {\ge} 1$, $k {\le} (n{+}1)/2$
\end{tabular}
&
(${\SL_k} {\times} {\SL_{n+1-k}},
\KK^k {\otimes} \KK^{n+1-k}$) &
$k$ &
\renewcommand{\tabcolsep}{0pt}%
\begin{tabular}{l}
$\alpha_i {+} \alpha_{n+1-i}$ for $1 {\le} i {\le} k{-}1$, \\[-2pt]
$\alpha_k {+} \ldots {+} \alpha_{n+1-k}$
\end{tabular}
\\

\hline

\no\label{No2} &
($\mathsf B_{n},1$), $n {\ge} 3$ &
(${\SO_{2n-1}}, \KK^{2n-1}$) &
$2$ &
$\alpha_1$, $2\alpha_{2} {+} \ldots {+} 2\alpha_{n}$
\\

\hline

\no &
($\mathsf B_{n},n$), $n {\ge} 3$ &
(${\SL_{n}}, \KK^{n}$) &
$1$ &
$\alpha_{1} {+} \ldots {+} \alpha_{n}$
\\

\hline

\no\label{No4} &
($\mathsf C_{n}, 1$), $n {\ge} 2$ &
(${\Sp_{2n-2}}, \KK^{2n-2}$) &
$1$ &
$\alpha_1 {+} 2(\sum_{i=2}^{n-1} \alpha_i) {+} \alpha_n$
\\

\hline

\no\label{No5} &
($\mathsf C_{n}, 2$), $n {\ge} 4$ &
(${\SL_2}{\times}{\Sp_{2n-4}}, \KK^2{\otimes}\KK^{2n-4}$) &
$3$ &
\renewcommand{\tabcolsep}{0pt}%
\begin{tabular}{l}
$\alpha_1 {+} \alpha_3$, $\alpha_2$,\\
$\alpha_3 {+} 2(\sum_{i=4}^{n-1}\alpha_i) {+} \alpha_n$
\end{tabular}
\\

\hline

\no\label{No6} &
($\mathsf C_{5}, 3$) &
(${\SL_3}{\times}{\Sp_{4}}, \KK^{3}{\otimes}\KK^{4}$) &
$5$ &
$\alpha_1$, $\alpha_2$, $\alpha_3$, $\alpha_4$, $\alpha_5$
\\

\hline

\no\label{No7} &
($\mathsf C_{n}, 3$), $n {\ge} 6$ &
(${\SL_3}{\times}{\Sp_{2n-6}}, \KK^{3}{\otimes}\KK^{2n-6}$) &
$6$ &
\renewcommand{\tabcolsep}{0pt}%
\begin{tabular}{l}
$\alpha_1$, $\alpha_2$, $\alpha_3$, $\alpha_4$, $\alpha_5$,\\
$\alpha_5 {+} 2(\sum_{i=6}^{n-1}\alpha_i) {+} \alpha_n$
\end{tabular}
\\

\hline

\no\label{No8} &
($\mathsf C_{n}, n{-}2$), $n {\ge} 6$ &
(${\SL_{n-2}{\times}{\Sp_4}}, \KK^{n-2}{\otimes}\KK^4$) &
$6$ &
\renewcommand{\tabcolsep}{0pt}%
\begin{tabular}{l}
$\alpha_1$, $\alpha_2$, $\alpha_3$, $\alpha_{n-1}$, $\alpha_n$,\\[-2pt]
$\alpha_4 {+} \ldots {+} \alpha_{n-2}$
\end{tabular}
\\

\hline

\no\label{No9} &
($\mathsf C_{n}, {n{-}1}$), $n {\ge} 3$ &
(${\SL_{n-1}{\times}{\SL_2}}, \KK^{n-1}{\otimes}\KK^2$) &
$2$ &
\renewcommand{\tabcolsep}{0pt}%
\begin{tabular}{l}
$\alpha_{1}{+}\alpha_n$, $\alpha_{2} {+} \ldots {+} \alpha_{n-1}$
\end{tabular}
\\

\hline

\no\label{No10} &
($\mathsf C_{n}, {n}$), $n {\ge} 2$ &
(${\SL_{n}}, \mathrm{S}^2\KK^{n}$) &
$n$ &
\renewcommand{\tabcolsep}{0pt}%
\begin{tabular}{l}
$2\alpha_{i}$ for $1 {\le} i {\le} n{-}1$, $\alpha_{n}$
\end{tabular}
\\

\hline

\no\label{No11} &
($\mathsf D_{n}, 1$), $n {\ge} 4$ &
(${\SO_{2n-2}}, \KK^{2n-2}$) &
$2$ &
\renewcommand{\tabcolsep}{0pt}%
\begin{tabular}{l}
$\alpha_1$, \\[-2pt]
$2\alpha_{2} {+} \ldots {+} 2\alpha_{n-2} {+} \alpha_{n-1} {+} \alpha_{n}$
\end{tabular}
\\

\hline

\no\label{No12} &
($\mathsf D_n, n$), $n{=}2m{+}1 {\ge} 5$ &
(${\SL_{n}}, \wedge^2\KK^{n}$) &
$m$ &
\renewcommand{\tabcolsep}{0pt}%
\begin{tabular}{l}
$\alpha_{2i-1} {+} 2\alpha_{2i} {+} \alpha_{2i+1}$ \\[-2pt]
for $1 {\le} i {\le} m{-}1$, \\[-2pt]
$\alpha_{2m-1} {+} \alpha_{2m} {+} \alpha_{2m+1}$
\end{tabular}
\\

\hline

\no\label{No13} &
($\mathsf D_{n}, n$), $n{=}2m {\ge} 4$ &
(${\SL_{n}}, \wedge^2\KK^{n}$) &
$m$ &
\renewcommand{\tabcolsep}{0pt}%
\begin{tabular}{l}
$\alpha_{2i-1} {+} 2\alpha_{2i} {+} \alpha_{2i+1}$ \\[-2pt]
for $1 {\le} i {\le} m{-}1$, %\\
$\alpha_{2m}$
\end{tabular}
\\

\hline

\no &
$(\mathsf G_2, 1)$ &
$(\SL_2, \KK^2)$ &
$1$ &
$\alpha_1 {+} \alpha_2$
\\

\hline

\no\label{No15} &
$(\mathsf F_4, 3)$ &
$({\SL_3} {\times} {\SL_2}, \KK^3 {\otimes} \KK^2)$ &
$2$ &
$\alpha_1 {+} \alpha_4$, $\alpha_2 {+} \alpha_3$
\\

\hline

\no\label{No16} &
$(\mathsf F_4, 4)$ &
$(\Spin_7, R(\varpi_3))$ &
$2$ &
$\alpha_1 {+} 2\alpha_2 {+} 3\alpha_3$, $\alpha_4$
\\

\hline

\no\label{No17} &
$(\mathsf E_6, 6)$ &
$(\Spin_{10}, R(\varpi_5))$ &
$2$ &
\renewcommand{\tabcolsep}{0pt}%
\begin{tabular}{l}
$\alpha_1 {+} \alpha_3 {+} \alpha_4 {+} \alpha_5 {+} \alpha_6$,\\[-2pt]
$2\alpha_2 {+} \alpha_3 {+} 2\alpha_4 {+} \alpha_5$
\end{tabular}
\\

\hline

\no\label{No18} &
$(\mathsf E_7, 7)$ &
$(\mathsf E_6, R(\varpi_1))$ & 3 &
\renewcommand{\tabcolsep}{0pt}%
\begin{tabular}{l}
$2\alpha_1 {+} \alpha_2 {+} 2\alpha_3 {+} 2\alpha_4 {+} \alpha_5$,\\[-2pt]
$\alpha_2 {+} \alpha_3 {+} 2\alpha_4 {+} 2\alpha_5 {+} 2\alpha_6$, $\alpha_7$\\
\end{tabular}
\\

\hline
\end{tabular}
\end{center}
\end{table}

In Table~\ref{table_par1psi1}, $\varpi_i$ denotes the $i$th fundamental weight of~$G$ and $R(\lambda)$ stands for the simple $G$-module with highest weight~$\lambda$.

In view of the necessary conditions of Proposition~\ref{prop_|Psi|=2}, the next two theorems provide a classification of all cases where $H$ is spherical, $|\Psi|=2$, and the SM-decomposition of~$\Psi$ is trivial.
They also provide the corresponding sets of spherical roots for each case.

\begin{theorem} \label{thm_par1psi2}
Suppose that $G$ is simple of rank~$n$ and Dynkin type~$\mathsf X_n$, $\Pi_L = \Pi \setminus \lbrace \alpha_k \rbrace$ for some~$k \in \lbrace 1,\ldots,n\rbrace$, $\Psi = \lbrace p\overline \alpha_k, q\overline \alpha_k \rbrace$ for some $q > p \ge 1$, and the SM-decomposition of~$\Psi$ is trivial.
Then the following assertions hold.
\begin{enumerate}[label=\textup{(\alph*)},ref=\textup{\alph*}]
\item \label{thm_par1psi2_a}
$H$ is a spherical subgroup of $G$ if and only if $p = 1$ and, up to an automorphism of the Dynkin diagram of~$G$, the triple $(\mathsf X_n, k, q)$ appears in Table~\textup{\ref{table_par1psi2}}.

\item \label{thm_par1psi2_b}
For each triple $(\mathsf X_n, k, q)$ listed in Table~\textup{\ref{table_par1psi2}} the set $\Sigma_G(G/H)$ is given in the last column of that table.
\end{enumerate}
\end{theorem}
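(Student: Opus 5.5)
Part~(\ref{thm_par1psi2_a}) is a classification of spherical $L$-modules, and part~(\ref{thm_par1psi2_b}) is a computation of spherical roots by degenerations.

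\textbf{Part (a).} Since $\Psi = \lbrace p\overline\alpha_k, q\overline\alpha_k\rbrace$, Lemma~\ref{lemma_alpha_in_Pi} forces $\overline\alpha_k \in \Psi$, so $p = 1$. Lemma~\ref{lemma_sum} then implies that $\Psi$ is ``closed downwards'': if $q \ge 3$, writing $q\overline\alpha_k = \overline\alpha_k + (q-1)\overline\alpha_k$ with $(q-1)\overline\alpha_k \in \Phi^+$ is allowed (since $\overline\alpha_k \in \Psi$), so this gives no contradiction. However, any decomposition $q\overline\alpha_k = a\overline\alpha_k + b\overline\alpha_k$ with $a,b \ge 2$ would force an element other than $\overline\alpha_k$, $q\overline\alpha_k$ into $\Psi$. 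For $q \ge 4$ such a decomposition exists (for example $2 + (q-2)$) whenever the relevant $C$-roots lie in $\Phi$, and $\Phi^+ = \lbrace j\overline\alpha_k \mid 1 \le j \le m_k\rbrace$, where $m_k$ is the coefficient of $\alpha_k$ in the highest root. Hence $q \in \lbrace 2,3\rbrace$, and $q = 3$ is possible only when $m_k = 3$.

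Next, $\mathfrak u = \mathfrak g(\overline\alpha_k) \oplus \mathfrak g(q\overline\alpha_k)$ must be a spherical $L$-module. Here $L$ is a maximal Levi subgroup, so $C$ is one-dimensional, and sphericity must hold for this nonsaturated $L$-module. I will run through all simple $G$ and all $k$ with $m_k \ge 2$, identify $(L', \mathfrak u)$, and compare with the lists of Benson--Ratcliff and Leahy~\cite{BR,Lea}, including the description there of which proper subtori of the saturating torus still act spherically. Equivalently, I will apply Algorithm~\ref{alg_sph_mod} together with Proposition~\ref{prop_sph_lin_ind} to the multiset of $T$-weights of $\mathfrak u$. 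For classical $G$ this is uniform in $n$ and reduces to a handful of families (for instance $\mathsf B_n$ with $\mathfrak u = \KK^{m} \otimes \KK^{2n-2m+1} \oplus \wedge^2 \KK^m$, and similarly for $\mathsf C_n$ and $\mathsf D_n$). For exceptional $G$ the check is finite and would be done by computer. Strict indecomposability has to be verified case by case as well, and it excludes the cases in which $L'$ acts on the two summands through different simple factors.

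\textbf{Part (b).} For each surviving case, Proposition~\ref{prop_num_sr} gives $|\Sigma_G(G/H)| = \rk_L(\mathfrak u)$, which is read off from Knop's tables~\cite{Kn98}. The spherical roots supported in $\Pi_L$ are obtained from the Remark after Proposition~\ref{prop_criterion_spherical}, as $\Sigma_L(\mathfrak p_u/\mathfrak h_u)$. The remaining roots lie in $\Delta^+ \setminus \Delta^+_L$.

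To find them, I will apply Theorem~\ref{thm_degen} and Proposition~\ref{prop_sr_union} with $\lambda_1 = \overline\alpha_k$ and $\lambda_2 = q\overline\alpha_k$. This gives $\Sigma_G(G/H) = \Sigma_G(G/N_1) \cup \Sigma_G(G/N_2)$. Each $N_i$ has $|\Psi(N_i)| = 1$ after reduction of the ambient group, so it falls under Theorem~\ref{thm_par1psi1} or under the $|\Psi|=1$ cases with a Levi $M$ and possibly larger $|\Pi\setminus\Pi_M|$. In the latter situation one more application of Algorithm~\ref{alg_ba} is needed.

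Concretely, for each case I compute $\delta = \widehat\lambda_i$, the Levi subgroup $M$, and $\mathfrak u_\infty$ via the weight-shifting rule of Proposition~\ref{prop_limitII_prelim} applied to the decomposition~(\ref{eqn_h_perp_lim}). This identifies $\Psi(N_i)$, and I then read off its spherical roots from the known tables.

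\textbf{Main obstacle.} The bookkeeping in the degenerations is the delicate part. I need to determine exactly which weight spaces of $\mathfrak g(q\overline\alpha_k)$ shift, and to what, in order to identify $\Psi(N_i)$ together with its highest weights. After that, I must check that the union of the two resulting sets of spherical roots has the predicted cardinality. For the exceptional groups this step would be done by computer.
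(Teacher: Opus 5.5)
Your proposal has two concrete errors, one in each part.

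\textbf{Part (a).} Your bound $q\le 3$ is fine, but the claim that $q=3$ is possible only when $m_k=3$ is false. Lemma~\ref{lemma_sum} constrains only decompositions of elements of $\Psi$. The only decomposition of $3\overline\alpha_k$ is $\overline\alpha_k+2\overline\alpha_k$, which is admissible for every $m_k\ge 3$. Moreover, $\lbrace j\overline\alpha_k \mid 2\le j\le m_k,\ j\ne 3\rbrace$ is closed under addition, so such an $H$ exists.

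This is not harmless. The only exceptional entry of Table~\ref{table_par1psi2}, $(\mathsf F_4,3,3)$, has $m_3=4$, since the highest root is $2\alpha_1+3\alpha_2+4\alpha_3+2\alpha_4$. There $\mathfrak g(3\overline\alpha_3)$ is spanned by $e_{\alpha_1+2\alpha_2+3\alpha_3+\alpha_4}$ and $e_{\alpha_1+2\alpha_2+3\alpha_3+2\alpha_4}$, giving the summand $\KK^2$. Your restriction would discard exactly this case; the correct constraint is only $q\le m_k$.

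Separately, your sweep over all $(G,k)$ with $m_k\ge 2$ is valid but heavier than needed. The paper first notes that the submodule $\mathfrak g(\overline\alpha_k)$ must itself be spherical, so $(\mathsf X_n,k)$ already lies in Table~\ref{table_par1psi1}. For classical types the highest-root coefficients then force $q=2$, and only $(\mathsf B_n,n)$ and $(\mathsf C_n,k)$ with $k\le n-1$ need a real check; $\mathsf G_2$ is handled by hand.

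\textbf{Part (b).} The claim that each $N_i=N(\lambda_i)$ has $|\Psi(N_i)|=1$ after reduction of the ambient group is false. A degeneration lowers the number of \emph{spherical roots} by one, not the number of active roots. The new Levi subgroup $M$ has a larger connected center, so $\mathfrak u(N)$ typically has more simple summands.

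Already for $(\mathsf B_n,n,2)$ with $n\ge 4$, the subgroup $N(\overline\alpha_n)$ has $\Pi\setminus\Pi_M=\lbrace\alpha_1,\alpha_n\rbrace$ and again $\Psi=\lbrace\overline\alpha_n,2\overline\alpha_n\rbrace$. After reduction of the ambient group this is the case $(\mathsf B_{n-1},n-1,2)$. Meanwhile $N(2\overline\alpha_n)$ has three active roots, with SM-components $\lbrace\overline\alpha_2+\overline\alpha_n\rbrace$ and $\lbrace\overline\alpha_n,2\overline\alpha_n\rbrace$ for $n\ge 5$. Also, by Proposition~\ref{prop_|Psi|=1}, a case with $|\Psi|=1$ always has a maximal parabolic after reduction, so your fallback ``$|\Psi|=1$ with larger $|\Pi\setminus\Pi_M|$'' never occurs.

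Consequently no fixed number of degeneration steps reaches Table~\ref{table_par1psi1} uniformly in $n$. What is missing is an induction on $n$:
\begin{enumerate}
\item split each $\Psi(N_i)$ into SM-components and run Algorithm~\ref{alg_H_i} on each;
\item recognize the same family $(\mathsf B_m,m,2)$ at ranks $n-1$ and $n-2$, together with components having a single active root, which Table~\ref{table_par1psi1} settles (e.g.\ one of them contributes $\alpha_1+\alpha_2$);
\item settle the base cases $n=3,4$ directly.
\end{enumerate}
This is how the paper proceeds. Only for the single exceptional case $(\mathsf F_4,3,3)$ does a (computer) run of the full recursive Algorithm~\ref{alg_ba} suffice.
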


\begin{table}[h]
\caption{Cases with $|\Pi \setminus \Pi_L| = 1$ and $|\Psi|=2$}
\label{table_par1psi2}

\begin{center}
\renewcommand{\tabcolsep}{3pt}%
\begin{tabular}{|c|l|l|c|l|}
\hline
No. & \multicolumn{1}{|c|}{$(\mathsf X_n, k, q)$} & \multicolumn{1}{|c|}{($L', \mathfrak u$)} & $\rk$ & \multicolumn{1}{|c|}{$\Sigma_G(G/H)$} \\

\hline

\no\label{Bnn2} & $(\mathsf B_{n}, n, 2)$, $n{\ge}3$ & $(\SL_n, \KK^{n}{\oplus} \wedge^2 \KK^n)$ & $n$ &
\renewcommand{\tabcolsep}{0pt}%
\begin{tabular}{l}
$\alpha_i {+} \alpha_{i+1}$ for $1{\le}i{\le}n{-}1$, \\
$\alpha_n$ \\
\end{tabular}
\\

\hline

\no\label{F433} & $(\mathsf F_4, 3, 3)$ & $(\SL_{3} {\times} \SL_{2},\KK^{3} {\otimes} \KK^2 {\oplus} \KK^2)$ & $4$ & $\alpha_1, \alpha_2 +\alpha_3, \alpha_{3}, \alpha_4$  \\

\hline
\end{tabular}
\end{center}
\end{table}

\begin{theorem} \label{thm_par2psi2}
Suppose that $G$ is simple of rank~$n$, $\Pi_L = \Pi \setminus \lbrace \alpha_k, \alpha_l \rbrace$ for some~$k, l \in \lbrace 1,\ldots,n\rbrace$ with $k < l$, $\Psi = \lbrace p \overline \alpha_k + q \overline \alpha_l, r \overline \alpha_k + s \overline \alpha_l \rbrace$ for some $p,q,r,s \in \ZZ_{\ge0}$, and the SM-decomposition of~$\Psi$ is trivial.
Then the following assertions hold.
\begin{enumerate}[label=\textup{(\alph*)},ref=\textup{\alph*}]
\item \label{thm_par2psi2_a}
$H$ is a spherical subgroup of $G$ if and only if, up to an automorphism of the Dynkin diagram of~$G$ and up to interchanging the pairs $(p,q)$ and $(r,s)$, the collection of pairs $(k,l), (p,q),(r,s)$ appears in one of the following tables depending on the type of~$G$:
\begin{itemize}
\item Table~\textup{\ref{table_A}} if $G$ is of type~$\mathsf A_n$ \textup($n \ge 3$\textup);
\item Table~\textup{\ref{table_B}} if $G$ is of type~$\mathsf B_n$ \textup($n \ge 3$\textup);
\item Table~\textup{\ref{table_C}} if $G$ is of type~$\mathsf C_n$ \textup($n \ge 3$\textup);
\item Table~\textup{\ref{table_D}} if $G$ is of type~$\mathsf D_n$ \textup($n \ge 4$\textup);
\item Table~\textup{\ref{table_F4}} if $G$ is of type~$\mathsf F_4$;
\item Table~\textup{\ref{table_E6}} if $G$ is of type~$\mathsf E_6$;
\item Table~\textup{\ref{table_E7}} if $G$ is of type~$\mathsf E_7$;
\item Table~\textup{\ref{table_E8}} if $G$ is of type~$\mathsf E_8$.
\end{itemize}

\item \label{thm_par2psi2_b}
For each of the cases in part~\textup(\ref{thm_par2psi2_a}\textup), the set $\Sigma_G(G/H)$ is given in the last column of the corresponding table.
\end{enumerate}
\end{theorem}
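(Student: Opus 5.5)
The proof has two parts. Part~(\ref{thm_par2psi2_a}) is a classification, which I would obtain by combining the necessary conditions of Proposition~\ref{prop_|Psi|=2} with the sphericity criterion of Proposition~\ref{prop_criterion_spherical}. Part~(\ref{thm_par2psi2_b}) asks for the spherical roots, which I would compute with the degenerations of \S\,\ref{subsec_degen_add}, reducing to the already known cases with $|\Psi| = 1$ (Theorem~\ref{thm_par1psi1}) and with $|\Pi\setminus\Pi_L| = 1$ (Theorem~\ref{thm_par1psi2}).

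\textbf{Classification.} By Proposition~\ref{prop_|Psi|=2}(\ref{prop_|Psi|=2_c}), after relabelling we may assume that $\overline\alpha_k \in \Psi$, so $(p,q) = (1,0)$. Since $\Supp\Psi = \Pi$, the other active root $\lambda = r\overline\alpha_k + s\overline\alpha_l$ has $s \ge 1$. I would then show that $s = 1$. Indeed, if $s \ge 2$, a decomposition $\lambda = \mu+\nu$ with $\mu,\nu \in \Phi^+$, obtained by the reflection argument used in the proof of Proposition~\ref{prop_|Psi|=2}, yields via Lemma~\ref{lemma_sum} a third active root or forces $\Psi \ni \overline\alpha_l$. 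The same lemma applied to $\lambda$ should also force $\overline\alpha_l \in \Psi$ or $r = 0$, or at least restrict $r$ severely, so only finitely many configurations $(k,l,r)$ remain.

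Next comes the main condition. The $L$-module $\mathfrak u = \mathfrak g(\overline\alpha_k) \oplus \mathfrak g(\lambda)$ must be a strictly indecomposable spherical module. I would compare it against the lists of Benson--Ratcliff and Leahy \cite{BR,Lea}, as tabulated by Knop \cite{Kn98}, using the explicit highest weights $\widehat{\overline\alpha_k}$ and $\widehat\lambda$. The check is type by type. For the classical types it is a combinatorial matching of the $L'$-modules, which are tensor products of defining representations, exterior squares and so on, against the list. For $\mathsf F_4$, $\mathsf E_6$, $\mathsf E_7$ and $\mathsf E_8$ I would run Algorithm~\ref{alg_sph_mod} on a computer over all pairs $k<l$ and all admissible $\lambda$, applying Proposition~\ref{prop_sph_lin_ind}. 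Strict indecomposability is then read off from whether some simple factor of $L'$ acts nontrivially on both summands. The surviving cases form the tables, and the rank column follows from \cite{Kn98} together with Proposition~\ref{prop_num_sr}.

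\textbf{Spherical roots.} For each surviving case I would choose $\lambda_1 = \overline\alpha_k$ and $\lambda_2 = \lambda$ and compute the degenerations $N(\lambda_1)$ and $N(\lambda_2)$ explicitly. Here Proposition~\ref{prop_h0_typeII} and Theorem~\ref{thm_degen}(\ref{thm_degen_a},\,\ref{thm_degen_b}) describe $\mathfrak u_\infty$ as an $M_0$-module, with highest weights shifted by multiples of $\delta$, and hence describe $\Psi(N_i)$. Each $N_i$ has $|\Psi(N_i)| \le 2$ and $|\Sigma_G(G/N_i)| = |\Sigma_G(G/H)|-1$. After reduction of the ambient group (Proposition~\ref{prop_reduction_AG_sph_roots}), each $N_i$ falls into a case of Theorem~\ref{thm_par1psi1} or Theorem~\ref{thm_par1psi2}, or into a case with nontrivial SM-decomposition handled by Theorem~\ref{thm_opt_reduction}, or into a smaller-rank case of the present theorem, which is treated by induction on the rank of $G$. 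Proposition~\ref{prop_sr_union} then gives $\Sigma_G(G/H) = \Sigma_G(G/N_1)\cup\Sigma_G(G/N_2)$. As consistency checks, the spherical roots supported in $\Pi_L$ must agree with $\Sigma_L(\mathfrak u)$ from \cite{Kn98}, and the remaining ones must lie in $\Delta^+\setminus\Delta^+_L$, as recalled in the remark following Proposition~\ref{prop_criterion_spherical}.

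\textbf{Main obstacle.} I expect the hardest step to be the explicit determination of $\mathfrak h_\infty$ and of $\Psi(N_i)$ in each family. This requires tracking how each $h_\delta$-weight subspace of $\mathfrak h^\perp\cap V(\alpha)$ shifts, and identifying the resulting $M$-module well enough to place $N_i$ in a known table. I would handle the parametrized classical families uniformly by writing the roots in $\varepsilon$-coordinates, and handle the exceptional cases by computer.
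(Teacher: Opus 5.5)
Your overall route is the paper's. You normalize one active root to $\overline\alpha_k$ or $\overline\alpha_l$ via Proposition~\ref{prop_|Psi|=2}(\ref{prop_|Psi|=2_c}), cut down the other with Lemma~\ref{lemma_sum}, and test strict indecomposability and sphericity of $\mathfrak u$ type by type: by hand for the classical types, with Algorithm~\ref{alg_sph_mod} on a computer for $\mathsf F_4$--$\mathsf E_8$. You then get $\Sigma_G(G/H)$ from $N(\lambda_1),N(\lambda_2)$, the SM-decomposition, Algorithm~\ref{alg_H_i} and induction. Your argument for $s=1$ is also fine, since the reflection argument produces two summands with positive coefficient at $\alpha_l$, neither equal to $\overline\alpha_k$.

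However, two classification steps as you state them would lose cases.

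\textbf{The normalization $(p,q)=(1,0)$.} It costs you the convention $k<l$, and $k$ and $l$ are not symmetric in general. In rows \ref{Cn1}, \ref{Cn2}, \ref{Cn6}, \ref{Cn13}, \ref{Cn16} of Table~\ref{table_C} and in the $(0,1),(1,1)$ rows of Table~\ref{table_F4}, it is $\overline\alpha_l$, not $\overline\alpha_k$, that lies in $\Psi$, and these types have no diagram automorphisms. So you cannot afterwards enumerate only pairs $k<l$ with $\overline\alpha_k\in\Psi$. You must treat $\lambda=\overline\alpha_k$ and $\lambda=\overline\alpha_l$ separately, as in Cases~1 and~2 of the paper.

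\textbf{The restriction on $r$.} The suggestion that Lemma~\ref{lemma_sum} forces $r=0$ or $\overline\alpha_l\in\Psi$ is false; both alternatives just say $\Psi=\lbrace\overline\alpha_k,\overline\alpha_l\rbrace$. In your normalization the tables contain:
\begin{itemize}
\item $r=1$: all $(1,0),(1,1)$ and $(0,1),(1,1)$ rows;
\item $r=2$: rows \ref{Dn7}, \ref{Dn8} of Table~\ref{table_D} and the $(0,1),(1,2)$ rows for $\mathsf E_6,\mathsf E_7,\mathsf E_8$;
\item $r=3$: the $(0,1),(1,3)$ rows for $\mathsf E_7,\mathsf E_8$.
\end{itemize}
The constraint that Lemma~\ref{lemma_sum} actually yields, and over which the paper enumerates (Step~\ref{Dstep2} of Algorithm~\ref{alg_D}), is this: every expression of the second root $\mu$ as a sum of two elements of $\Phi^+$ must involve $\lambda$. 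Equivalently, either $\mu=\overline\alpha_l$, or $\mu=\lambda+(\mu-\lambda)$ is the only such expression.

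\textbf{The bound $|\Psi(N_i)|\le 2$.} In the spherical-root part this claim is also wrong. In the paper's worked example for row~\ref{An1} of Table~\ref{table_A} with $k\ge 2$, one gets $\Psi(N_1)=\lbrace\overline\alpha_k,\overline\alpha_{n-1}+\overline\alpha_n\rbrace\cup\lbrace\overline\alpha_n\rbrace$. This does not break your argument. A spherical $N_i$ with three active roots must have nontrivial SM-decomposition, because strictly indecomposable spherical modules have at most two simple summands. Such $N_i$ are therefore covered by the branch you already route through Algorithm~\ref{alg_H_i} and Theorem~\ref{thm_opt_reduction}. But that branch is the typical situation, not an exceptional one. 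With these corrections, your plan coincides with the paper's proof.
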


\begin{remark}
For convenience of the reader, for each  Table~\ref{table_par1psi1}--\ref{table_E8} we also included the information on the $L'$-module structure of~$\mathfrak u$ (up to equivalence) as well as the value of rank of~$\mathfrak u$ as a spherical $L$-module, which also equals the cardinality of the set~$\Sigma_G(G/H)$ by Proposition~\ref{prop_num_sr}.
\end{remark}

\begin{remark}
It can happen for Tables~\ref{table_A}--\ref{table_E8} that for two or more cases belonging to the same table the corresponding spherical subgroups are conjugate in~$G$.
Since this is not important for our algorithms, we do not make an attempt to identify all such cases.
\end{remark}

\begin{table}[h]
\caption{Cases with $|\Pi \setminus \Pi_L| = 2$ and $|\Psi|=2$ for type~$\mathsf A_n$ ($n \ge 3$)} \label{table_A}

\begin{center}
\renewcommand{\tabcolsep}{3pt}%
\begin{tabular}{|c|l|l|l|c|l|}
\hline
No. & \multicolumn{1}{|c|}{$(k,l)$} & \multicolumn{1}{|c|}{$(p,q),(r,s)$} & \multicolumn{1}{|c|}{($L', \mathfrak u$)} & $\rk$ & \multicolumn{1}{|c|}{$\Sigma_G(G/H)$} \\

\hline

\no\label{An1} &
\renewcommand{\tabcolsep}{0pt}%
\begin{tabular}{l}
$(k,n)$, \\
$n{-}k {>} k {\ge} 1$
\end{tabular}
& $(1,0),(0,1)$ &
\renewcommand{\tabcolsep}{0pt}%
\begin{tabular}{l}
$(\SL_k {\times} \SL_{n-k},$ \\
$\KK^{k}{\otimes}\KK^{n-k}{\oplus}(\KK^{n-k})^*)$
\end{tabular}
& $2k{+}1$ &
\renewcommand{\tabcolsep}{0pt}%
\begin{tabular}{l}
$\alpha_1,\ldots,\alpha_k$, \\
$\alpha_{k+1}{+}\ldots{+}\alpha_{n-k}$, \\
$\alpha_{n-k+1},\ldots,\alpha_{n}$
\end{tabular}
\\

\hline

\no\label{An2} &
\renewcommand{\tabcolsep}{0pt}%
\begin{tabular}{l}
$(k,n)$, \\
$k {\ge} n{-}k {\ge} 2$
\end{tabular}
& $(1,0),(0,1)$ &
\renewcommand{\tabcolsep}{0pt}%
\begin{tabular}{l}
$(\SL_k {\times} \SL_{n-k},$ \\
$\KK^{k}{\otimes}\KK^{n-k}{\oplus}(\KK^{n-k})^*)$
\end{tabular}
& $2(n{-}k)$ &
\renewcommand{\tabcolsep}{0pt}%
\begin{tabular}{l}
$\alpha_1,\ldots,\alpha_{n-k-1}$, \\
$\alpha_{n-k}{+}\ldots{+}\alpha_{k}$, \\
$\alpha_{k+1},\ldots,\alpha_n$
\end{tabular}
\\

\hline

\no\label{An3} &
\renewcommand{\tabcolsep}{0pt}%
\begin{tabular}{l}
$(k,n)$, \\
$n{-}k {\ge} k {\ge} 2$
\end{tabular}
& $(1,0),(1,1)$ &
\renewcommand{\tabcolsep}{0pt}%
\begin{tabular}{l}
$(\SL_{k}{\otimes}\SL_{n-k},$ \\
$\KK^{k}{\otimes}\KK^{n-k} {\oplus} \KK^{k})$
\end{tabular}
& $2k$  &
\renewcommand{\tabcolsep}{0pt}%
\begin{tabular}{l}
$\alpha_1,\ldots,\alpha_{k-1}$, \\
$\alpha_{k}{+}\ldots{+}\alpha_{n-k}$, \\
$\alpha_{n-k+1},\ldots,\alpha_{n}$
\end{tabular}
\\

\hline

\no\label{An4} &
\renewcommand{\tabcolsep}{0pt}%
\begin{tabular}{l}
$(k,n)$, \\
$k {>} n{-}k {\ge} 1$
\end{tabular}
& $(1,0),(1,1)$ &
\renewcommand{\tabcolsep}{0pt}%
\begin{tabular}{l}
$(\SL_{k}{\otimes}\SL_{n-k},$ \\
$\KK^{k}{\otimes}\KK^{n-k} {\oplus} \KK^{k})$
\end{tabular}
& $2(n{-}k){+}1$  &
\renewcommand{\tabcolsep}{0pt}%
\begin{tabular}{l}
$\alpha_1,\ldots,\alpha_{n-k}$,\\
$\alpha_{n-k+1}{+}\ldots{+}\alpha_{k}$, \\
$\alpha_{k+1},\ldots,\alpha_n$
\end{tabular}
\\

\hline

\no\label{An5} &
\renewcommand{\tabcolsep}{0pt}%
\begin{tabular}{l}
$(k,k{+}1)$, \\
$n{-}k {\ge} k {\ge}2$
\end{tabular}
& $(1,0),(1,1)$ &
\renewcommand{\tabcolsep}{0pt}%
\begin{tabular}{l}
$(\SL_{k}{\otimes}\SL_{n-k},$ \\
$\KK^{k} {\oplus} \KK^{k}{\otimes}\KK^{n-k})$
\end{tabular}
& $2k$  &
\renewcommand{\tabcolsep}{0pt}%
\begin{tabular}{l}
$\alpha_1,\ldots,\alpha_k$, \\
$\alpha_{k+1}{+}\ldots{+}\alpha_{n-k+1}$, \\
$\alpha_{n-k+2},\ldots,\alpha_n$
\end{tabular}
\\

\hline

\no\label{An6} &
\renewcommand{\tabcolsep}{0pt}%
\begin{tabular}{l}
$(k,k{+}1)$, \\
$k {>} n{-}k {\ge}2$
\end{tabular}
& $(1,0),(1,1)$ &
\renewcommand{\tabcolsep}{0pt}%
\begin{tabular}{l}
$(\SL_{k}{\otimes}\SL_{n-k},$ \\
$\KK^{k} {\oplus} \KK^{k}{\otimes}\KK^{n-k})$
\end{tabular}
& $2(n{-}k){+}1$  &
\renewcommand{\tabcolsep}{0pt}%
\begin{tabular}{l}
$\alpha_1,\ldots,\alpha_{n-k}$,\\
$\alpha_{n-k+1}{+}\ldots{+}\alpha_{k}$, \\
$\alpha_{k+1},\ldots,\alpha_n$
\end{tabular}
\\

\hline

\no\label{An7} &
\renewcommand{\tabcolsep}{0pt}%
\begin{tabular}{l}
$(k,k{+}2)$, \\
$2 {\le} k {\le} n{-}3$
\end{tabular}
& $(1,0),(0,1)$ &
\renewcommand{\tabcolsep}{0pt}%
\begin{tabular}{l}
$(\SL_k \times \SL_2 \times \SL_{n-k-1},$ \\
$\KK^k{\otimes}\KK^2 {\oplus} \KK^2{\otimes}\KK^{n-k-1})$
\end{tabular}
& $5$ &
\renewcommand{\tabcolsep}{0pt}%
\begin{tabular}{l}
$\alpha_1,\alpha_2 {+} \ldots {+} \alpha_{k}$, \\
$\alpha_{k+1},$ \\
$\alpha_{k+2} {+} \ldots {+} \alpha_{n-1}, \alpha_n$
\end{tabular}
\\

\hline

\no\label{An8} &
\renewcommand{\tabcolsep}{0pt}%
\begin{tabular}{l}
$(2,l)$, \\
$4 {\le} l {\le} n{-}1$
\end{tabular}
& $(1,0),(1,1)$ &
\renewcommand{\tabcolsep}{0pt}%
\begin{tabular}{l}
$(\SL_{2} {\times} \SL_{l-2} {\times} \SL_{n-l+1}$, \\
$\KK^2 {\otimes} \KK^{l-2} {\oplus} \KK^2{\otimes}\KK^{n-l+1})$
\end{tabular}
& $5$ &
\renewcommand{\tabcolsep}{0pt}%
\begin{tabular}{l}
$\alpha_1, \alpha_2 {+} \ldots {+} \alpha_{l-2}$, \\
$\alpha_{l-1}$, \\
$\alpha_{l} {+} \ldots {+} \alpha_{n-1}, \alpha_n$
\end{tabular}
\\

\hline
\end{tabular}
\end{center}
\end{table}

\begin{table}[h]
\caption{Cases with $|\Pi \setminus \Pi_L| = 2$ and $|\Psi|=2$ for type~$\mathsf B_n$ ($n \ge 3$)} \label{table_B}

\begin{center}
\renewcommand{\tabcolsep}{3pt}%
\begin{tabular}{|c|l|l|l|c|l|}
\hline
No. & \multicolumn{1}{|c|}{$(k,l)$} & \multicolumn{1}{|c|}{$(p,q),(r,s)$} & \multicolumn{1}{|c|}{($L', \mathfrak u$)} & $\rk$ & \multicolumn{1}{|c|}{$\Sigma_G(G/H)$} \\

\hline

\no\label{Bn1} &
\renewcommand{\tabcolsep}{0pt}%
\begin{tabular}{l}
$(k,n)$, \\
$n{-}k {>} k {\ge} 1$
\end{tabular}
& $(1,0),(0,1)$ &
\renewcommand{\tabcolsep}{0pt}%
\begin{tabular}{l}
$(\SL_k {\times} \SL_{n-k},$ \\
$\KK^{k}{\otimes}\KK^{n-k}{\oplus}(\KK^{n-k})^*)$
\end{tabular}
& $2k{+}1$ &
\renewcommand{\tabcolsep}{0pt}%
\begin{tabular}{l}
$\alpha_1,\ldots,\alpha_k$, \\
$\alpha_{k+1}{+}\ldots{+}\alpha_{n-k}$, \\
$\alpha_{n-k+1},\ldots,\alpha_{n}$
\end{tabular}
\\

\hline

\no\label{Bn2} &
\renewcommand{\tabcolsep}{0pt}%
\begin{tabular}{l}
$(k,n)$, \\
$k {\ge} n{-}k {\ge} 2$
\end{tabular}
& $(1,0),(0,1)$ &
\renewcommand{\tabcolsep}{0pt}%
\begin{tabular}{l}
$(\SL_k {\times} \SL_{n-k},$ \\
$\KK^{k}{\otimes}\KK^{n-k}{\oplus}(\KK^{n-k})^*)$
\end{tabular}
& $2(n{-}k)$ &
\renewcommand{\tabcolsep}{0pt}%
\begin{tabular}{l}
$\alpha_1,\ldots,\alpha_{n-k-1}$, \\
$\alpha_{n-k}{+}\ldots{+}\alpha_{k}$, \\
$\alpha_{k+1},\ldots,\alpha_n$
\end{tabular}
\\

\hline

\no\label{Bn3} &
\renewcommand{\tabcolsep}{0pt}%
\begin{tabular}{l}
$(k,n)$, \\
$n{-}k {\ge} k {\ge} 2$
\end{tabular}
& $(1,0),(1,1)$ &
\renewcommand{\tabcolsep}{0pt}%
\begin{tabular}{l}
$(\SL_{k}{\otimes}\SL_{n-k}$, \\
$\KK^{k}{\otimes}\KK^{n-k} {\oplus} \KK^{k})$
\end{tabular}
& $2k$  &
\renewcommand{\tabcolsep}{0pt}%
\begin{tabular}{l}
$\alpha_1,\ldots,\alpha_{k-1}$, \\
$\alpha_{k}{+}\ldots{+}\alpha_{n-k}$, \\
$\alpha_{n-k+1},\ldots,\alpha_{n}$
\end{tabular}
\\

\hline

\no\label{Bn4} &
\renewcommand{\tabcolsep}{0pt}%
\begin{tabular}{l}
$(k,n)$, \\
$k {>} n{-}k {\ge} 1$
\end{tabular}
& $(1,0),(1,1)$ &
\renewcommand{\tabcolsep}{0pt}%
\begin{tabular}{l}
$(\SL_{k}{\otimes}\SL_{n-k}$, \\
$\KK^{k}{\otimes}\KK^{n-k} {\oplus} \KK^{k})$
\end{tabular}
& $2(n{-}k){+}1$  &
\renewcommand{\tabcolsep}{0pt}%
\begin{tabular}{l}
$\alpha_1,\ldots,\alpha_{n-k}$,\\
$\alpha_{n-k+1}{+}\ldots{+}\alpha_{k}$, \\
$\alpha_{k+1},\ldots,\alpha_n$
\end{tabular}
\\

\hline
\end{tabular}
\end{center}
\end{table}

\begin{table}%[h]
\caption{Cases with $|\Pi \setminus \Pi_L| = 2$ and $|\Psi|=2$ for type~$\mathsf C_n$ ($n \ge 3$)} \label{table_C}

\begin{center}
\renewcommand{\tabcolsep}{3pt}%
\begin{tabular}{|c|l|l|l|c|l|}
\hline
No. & \multicolumn{1}{|c|}{$(k,l)$} & \multicolumn{1}{|c|}{$(p,q),(r,s)$} & \multicolumn{1}{|c|}{($L', \mathfrak u$)} & $\rk$ & \multicolumn{1}{|c|}{$\Sigma_G(G/H)$} \\

\hline

\no\label{Cn1} &
\renewcommand{\tabcolsep}{0pt}%
\begin{tabular}{l}
$(1,2)$, \\
$n{=}3$
\end{tabular}
& $(0,1),(1,1)$ & $(\SL_{2},\KK^{2} {\oplus} \KK^{2})$ & $3$ & $\alpha_1, \alpha_2, \alpha_3$
\\

\hline

\no\label{Cn2} &
\renewcommand{\tabcolsep}{0pt}%
\begin{tabular}{l}
$(1,2)$, \\
$n{\ge}4$
\end{tabular}
& $(0,1),(1,1)$ & $(\Sp_{2n-4},\KK^{2n-4} {\oplus} \KK^{2n-4})$ & $4$ &
\renewcommand{\tabcolsep}{0pt}%
\begin{tabular}{l}
$\alpha_1,\alpha_2, \alpha_3,$ \\
$\alpha_3 {+} 2(\sum_{i=4}^{n-1} \alpha_i) {+} \alpha_n$
\end{tabular}
\\

\hline

\no\label{Cn3} &
\renewcommand{\tabcolsep}{0pt}%
\begin{tabular}{l}
$(1,3)$, \\
$n {=} 4$
\end{tabular}
& $(1,0),(0,1)$ & $(\SL_2 {\times} \SL_{2},\KK^{2} {\oplus} \KK^2 {\otimes} \KK^{2})$ & $4$ &
$\alpha_1,\alpha_2,\alpha_3,\alpha_4$
\\

\hline

\no\label{Cn4} &
\renewcommand{\tabcolsep}{0pt}%
\begin{tabular}{l}
$(1,3)$, \\
$n {\ge} 5$
\end{tabular}
& $(1,0),(0,1)$ &
\renewcommand{\tabcolsep}{0pt}%
\begin{tabular}{l}
$(\SL_2 {\times} \Sp_{2n-6},$ \\
$\KK^{2} {\oplus} \KK^2 {\otimes} \KK^{2n-6})$
\end{tabular}
& $5$ &
\renewcommand{\tabcolsep}{0pt}%
\begin{tabular}{l}
$\alpha_1, \alpha_2, \alpha_3, \alpha_4,$ \\
$\alpha_4 {+} 2(\sum_{i=5}^{n-1} \alpha_i) {+} \alpha_n$
\end{tabular}
\\

\hline

\no\label{Cn5} &
\renewcommand{\tabcolsep}{0pt}%
\begin{tabular}{l}
$(1,n{-}1)$, \\
$n {\ge} 5$
\end{tabular}
& $(1,0),(0,1)$ &
\renewcommand{\tabcolsep}{0pt}%
\begin{tabular}{l}
$(\SL_{n-2} {\times} \SL_{2}$, \\
$(\KK^{n-2})^* {\oplus} \KK^{n-2} {\otimes} \KK^{2})$
\end{tabular}
& $5$ &
\renewcommand{\tabcolsep}{0pt}%
\begin{tabular}{l}
$\alpha_1, \alpha_2, \alpha_3{+}\ldots{+}\alpha_{n-2},$ \\
$\alpha_{n-1},\alpha_n$
\end{tabular}
\\

\hline

\no\label{Cn6} &
\renewcommand{\tabcolsep}{0pt}%
\begin{tabular}{l}
$(1,n{-}1)$, \\
$n{\ge}4$
\end{tabular}
& $(0,1),(1,1)$ &
\renewcommand{\tabcolsep}{0pt}%
\begin{tabular}{l}
$(\SL_{n-2} {\times} \SL_2$, \\
$\KK^{n-2}{\otimes}\KK^2 {\oplus} \KK^{2})$
\end{tabular}
& $4$ &
\renewcommand{\tabcolsep}{0pt}%
\begin{tabular}{l}
$\alpha_1, \alpha_2,$ \\
$\alpha_{3}{+}\ldots{+}\alpha_{n-1}, \alpha_n$
\end{tabular}
\\

\hline

\no\label{Cn7} &
$(1,n)$ & $(1,0),(1,1)$ & $(\SL_{n-1}, \KK^{n-1} \oplus (\KK^{n-1})^*)$ & $3$ & $\alpha_1, \alpha_2{+}\ldots{+}\alpha_{n-1}, \alpha_n$ \\

\hline

\no\label{Cn8} &
\renewcommand{\tabcolsep}{0pt}%
\begin{tabular}{l}
$(2,3)$, \\
$n {=} 4$
\end{tabular}
& $(1,0),(1,1)$ & $(\SL_2 {\times} \SL_{2},\KK^{2} {\oplus} \KK^2 {\otimes} \KK^{2})$ & $4$ &
$\alpha_1,\alpha_2,\alpha_3,\alpha_4$
\\

\hline

\no\label{Cn9} &
\renewcommand{\tabcolsep}{0pt}%
\begin{tabular}{l}
$(2,3)$, \\
$n {\ge} 5$
\end{tabular}
& $(1,0),(1,1)$ &
\renewcommand{\tabcolsep}{0pt}%
\begin{tabular}{l}
$(\SL_2 {\times} \Sp_{2n-6},$ \\
$\KK^{2} {\oplus} \KK^2 {\otimes} \KK^{2n-6})$
\end{tabular}
& $5$ &
\renewcommand{\tabcolsep}{0pt}%
\begin{tabular}{l}
$\alpha_1, \alpha_2, \alpha_3, \alpha_4,$ \\
$\alpha_4 {+} 2(\sum_{i=5}^{n-1} \alpha_i) {+} \alpha_n$
\end{tabular}
\\

\hline

\no\label{Cn10} &
\renewcommand{\tabcolsep}{0pt}%
\begin{tabular}{l}
$(2,l)$, \\
$4{\le}l{\le}n{-}2$
\end{tabular}
& $(1,0),(1,1)$ &
\renewcommand{\tabcolsep}{0pt}%
\begin{tabular}{l}
$(\SL_{2} {\times} \SL_{l-2} {\times} \Sp_{2n-2l}$, \\
$\KK^{2}{\otimes}\KK^{l-2} {\oplus} \KK^2 {\otimes} \KK^{2n-2l})$
\end{tabular}
& $6$ &
\renewcommand{\tabcolsep}{0pt}%
\begin{tabular}{l}
$\alpha_1,\alpha_2{+}\ldots{+}\alpha_{l-2}$, \\
$\alpha_{l-1}, \alpha_{l}, \alpha_{l+1}$, \\
$\alpha_{l+1} {+} 2(\sum_{i=l+2}^{n-1} \alpha_i) {+} \alpha_n$ \\
\end{tabular}
\\

\hline

\no\label{Cn11} &
\renewcommand{\tabcolsep}{0pt}%
\begin{tabular}{l}
$(2,n{-}1)$, \\
$n{\ge}5$
\end{tabular}
& $(1,0),(1,1)$ &
\renewcommand{\tabcolsep}{0pt}%
\begin{tabular}{l}
$(\SL_{2}{\times}\SL_{n-3} {\times} \SL_{2}$, \\
$\KK^{2}{\otimes}\KK^{n-3} {\oplus} \KK^2 {\otimes} \KK^{2})$
\end{tabular}
& $5$ &
\renewcommand{\tabcolsep}{0pt}%
\begin{tabular}{l}
$\alpha_1, \alpha_2{+}\ldots{+}\alpha_{n-3}$, \\
$\alpha_{n-2}, \alpha_{n-1}, \alpha_{n}$
\end{tabular}
\\

\hline

\no\label{Cn12} &
\renewcommand{\tabcolsep}{0pt}%
\begin{tabular}{l}
$(k,k{+}2)$, \\
$2{\le}k{\le}n{-}4$
\end{tabular}
& $(1,0),(0,1)$ &
\renewcommand{\tabcolsep}{0pt}%
\begin{tabular}{l}
$(\SL_k{\times}\SL_2 {\times} \Sp_{2n-2k-4}$, \\
$\KK^{k}{\otimes}\KK^{2} {\oplus} \KK^2 {\otimes} \KK^{2n-2k-4})$
\end{tabular}
& $6$ &
\renewcommand{\tabcolsep}{0pt}%
\begin{tabular}{l}
$\alpha_1,\alpha_2{+}\ldots{+}\alpha_{k}$, \\
$\alpha_{k+1}, \alpha_{k+2}, \alpha_{k+3}$, \\
$\alpha_{k+3} {+} 2(\sum_{i=k+4}^{n-1} \alpha_i) {+} \alpha_n$ \\
\end{tabular}
\\

\hline

\no\label{Cn13} &
\renewcommand{\tabcolsep}{0pt}%
\begin{tabular}{l}
$(k,n{-}1)$, \\
$2 {\le} k {\le} n{-}3$
\end{tabular}
& $(0,1),(1,1)$ &
\renewcommand{\tabcolsep}{0pt}%
\begin{tabular}{l}
$(\SL_{k} {\times} \SL_{n-k-1} {\times} \SL_2,$ \\
$\KK^{n-k-1} {\otimes} \KK^{2} {\oplus} \KK^{k}{\otimes}\KK^2)$
\end{tabular}
& $5$ &
\renewcommand{\tabcolsep}{0pt}%
\begin{tabular}{l}
$\alpha_1, \alpha_2{+}\ldots{+}\alpha_{k}, \alpha_{k+1},$ \\
$\alpha_{k+2}{+}\ldots{+}\alpha_{n-1}, \alpha_n$
\end{tabular}
\\

\hline

\no\label{Cn14} &
\renewcommand{\tabcolsep}{0pt}%
\begin{tabular}{l}
$(n{-}3,n{-}1)$, \\
$n{\ge}5$
\end{tabular}
& $(1,0),(0,1)$ &
\renewcommand{\tabcolsep}{0pt}%
\begin{tabular}{l}
$(\SL_{n-3}{\times}\SL_2 {\times} \SL_{2}$, \\
$\KK^{n-3}{\otimes}\KK^{2} {\oplus} \KK^2 {\otimes} \KK^{2})$
\end{tabular}
& $5$ &
\renewcommand{\tabcolsep}{0pt}%
\begin{tabular}{l}
$\alpha_1, \alpha_2{+}\ldots{+}\alpha_{n-3}$, \\
$\alpha_{n-2}, \alpha_{n-1}, \alpha_{n}$
\end{tabular}
\\

\hline

\no\label{Cn15} &
\renewcommand{\tabcolsep}{0pt}%
\begin{tabular}{l}
$(n{-}2,n{-}1)$, \\
$n {\ge} 5$
\end{tabular}
& $(1,0),(1,1)$ &
\renewcommand{\tabcolsep}{0pt}%
\begin{tabular}{l}
$(\SL_{n-2} {\times} \SL_{2}$, \\
$\KK^{n-2} {\oplus} \KK^{n-2} {\otimes} \KK^{2})$
\end{tabular}
& $5$ &
\renewcommand{\tabcolsep}{0pt}%
\begin{tabular}{l}
$\alpha_1, \alpha_2, \alpha_3{+}\ldots{+}\alpha_{n-2},$ \\
$\alpha_{n-1},\alpha_n$
\end{tabular}
\\

\hline

\no\label{Cn16} &
\renewcommand{\tabcolsep}{0pt}%
\begin{tabular}{l}
$(n{-}2,n{-}1)$, \\
$n{\ge}4$
\end{tabular}
& $(0,1),(1,1)$ &
\renewcommand{\tabcolsep}{0pt}%
\begin{tabular}{l}
$(\SL_{n-2} {\times} \SL_2$, \\
$\KK^{2} {\oplus} \KK^{n-2}{\otimes}\KK^2)$
\end{tabular}
& $4$ &
\renewcommand{\tabcolsep}{0pt}%
\begin{tabular}{l}
$\alpha_1, \alpha_2{+}\ldots{+}\alpha_{n-2}$, \\
$\alpha_{n-1},\alpha_n$
\end{tabular}
\\

\hline

\no\label{Cn17} &
$(n{-}1,n)$ & $(1,0),(1,1)$ & $(\SL_{n-1}, \KK^{n-1} \oplus \KK^{n-1})$ & $3$ & $\alpha_1, \alpha_2{+}\ldots{+}\alpha_{n-1}, \alpha_n$ \\

\hline
\end{tabular}
\end{center}
\end{table}

\begin{table}[h]
\caption{Cases with $|\Pi \setminus \Pi_L| = 2$ and $|\Psi|=2$ for type~$\mathsf D_n$ ($n \ge 3$)} \label{table_D}

\begin{center}
\renewcommand{\tabcolsep}{3pt}%
\begin{tabular}{|c|l|l|l|c|l|}
\hline
No. & \multicolumn{1}{|c|}{$(k,l)$} & \multicolumn{1}{|c|}{$(p,q),(r,s)$} & \multicolumn{1}{|c|}{($L', \mathfrak u$)} & $\rk$ & \multicolumn{1}{|c|}{$\Sigma_G(G/H)$} \\

\hline

\no\label{Dn1} &
\renewcommand{\tabcolsep}{0pt}%
\begin{tabular}{l}
$(1,n)$ \\
$n{=}2m{+}1$
\end{tabular}
& $(1,0),(0,1)$ & $(\SL_{n-1},(\KK^{n-1})^* {\oplus} \wedge^2 \KK^{n-1})$ & $n{-}1$ &
\renewcommand{\tabcolsep}{0pt}%
\renewcommand{\tabcolsep}{0pt}%
\begin{tabular}{l}
$\alpha_i{+}\alpha_{i+1}$ for $1{\le}i{\le}n{-}2$, \\
$\alpha_n$
\end{tabular}
\\

\hline

\no\label{Dn2} &
\renewcommand{\tabcolsep}{0pt}%
\begin{tabular}{l}
$(1,n)$ \\
$n{=}2m$
\end{tabular}
& $(1,0),(0,1)$ & $(\SL_{n-1},(\KK^{n-1})^* {\oplus} \wedge^2 \KK^{n-1})$ & $n{-}1$ &
\renewcommand{\tabcolsep}{0pt}%
\renewcommand{\tabcolsep}{0pt}%
\begin{tabular}{l}
$\alpha_i{+}\alpha_{i+1}$ for $1{\le}i{\le}n{-}3$, \\
$\alpha_{n-2}{+}\alpha_{n}, \alpha_{n-1}$
\end{tabular}
\\

\hline

\no\label{Dn3} &
$(1,n)$
& $(1,0),(1,1)$ & $(\SL_{n-1},\KK^{n-1} {\oplus} (\KK^{n-1})^*)$ & $3$ &
\renewcommand{\tabcolsep}{0pt}%
\begin{tabular}{l}
$\alpha_1, \alpha_2{+}\ldots{+}\alpha_{n-1},$ \\
$\alpha_2{+}\ldots{+}\alpha_{n-2}{+}\alpha_n$
\end{tabular}
\\

\hline

\no\label{Dn4} &
\renewcommand{\tabcolsep}{0pt}%
\begin{tabular}{l}
$(1,n)$ \\
\end{tabular}
& $(0,1),(1,1)$ & $(\SL_{n-1}, \wedge^2 \KK^{n-1} {\oplus} \KK^{n-1})$ & $n{-}1$ &
\renewcommand{\tabcolsep}{0pt}%
\renewcommand{\tabcolsep}{0pt}%
\begin{tabular}{l}
$\alpha_i{+}\alpha_{i+1}$ for $1{\le}i{\le}n{-}2$, \\
$\alpha_n$
\end{tabular}
\\

\hline

\no\label{Dn5} &
\renewcommand{\tabcolsep}{0pt}%
\begin{tabular}{l}
$(2,5)$, \\
$n{=}5$
\end{tabular}
& $(1,0),(0,1)$ & $(\SL_{2}{\times}{\SL_3},\KK^{2}{\otimes}\KK^3 {\oplus} \KK^{3})$ & $5$ &
$\alpha_1, \alpha_2, \alpha_3, \alpha_4, \alpha_5$
\\

\hline

\no\label{Dn6} &
\renewcommand{\tabcolsep}{0pt}%
\begin{tabular}{l}
$(2,5)$, \\
$n{=}5$
\end{tabular}
& $(0,1),(1,1)$ & $(\SL_{2}{\times}{\SL_3}, (\KK^{3})^* {\oplus} \KK^{2}{\otimes}\KK^3)$ & $5$ &
$\alpha_1, \alpha_2, \alpha_3, \alpha_4, \alpha_5$
\\

\hline

\no\label{Dn7} &
\renewcommand{\tabcolsep}{0pt}%
\begin{tabular}{l}
$(3,5)$, \\
$n{=}5$
\end{tabular}
& $(1,0),(2,1)$ & $(\SL_{3}{\times}{\SL_2},\KK^{3}{\otimes}\KK^2 {\oplus} (\KK^{3})^*)$ & $5$ &
$\alpha_1, \alpha_2, \alpha_3, \alpha_4, \alpha_5$
\\

\hline

\no\label{Dn8} &
\renewcommand{\tabcolsep}{0pt}%
\begin{tabular}{l}
$(3,n)$, \\
$n{\ge}6$
\end{tabular}
& $(1,0),(2,1)$ &
\renewcommand{\tabcolsep}{0pt}%
\begin{tabular}{l}
$(\SL_{3}{\times}{\SL_{n-3}},$ \\
$\KK^{3}{\otimes}\KK^{n-3} {\oplus} (\KK^{3})^*)$
\end{tabular}
& $6$ &
\renewcommand{\tabcolsep}{0pt}%
\begin{tabular}{l}
$\alpha_1, \alpha_2, \alpha_3{+}\ldots{+}\alpha_{n-3},$ \\
$\alpha_{n-2}, \alpha_{n-1}, \alpha_n$
\end{tabular}
\\

\hline

\no\label{Dn9} &
\renewcommand{\tabcolsep}{0pt}%
\begin{tabular}{l}
$(n{-}3,n)$, \\
$n{\ge}6$
\end{tabular}
& $(1,0),(0,1)$ & $(\SL_{n-3}{\times}{\SL_3},\KK^{n-3}{\otimes}\KK^3 {\oplus} \KK^{3})$ & $6$ &
\renewcommand{\tabcolsep}{0pt}%
\begin{tabular}{l}
$\alpha_1, \alpha_2, \alpha_3{+}\ldots{+}\alpha_{n-3},$ \\
$\alpha_{n-2}, \alpha_{n-1}, \alpha_n$
\end{tabular}
\\

\hline

\no\label{Dn10} &
\renewcommand{\tabcolsep}{0pt}%
\begin{tabular}{l}
$(n{-}3,n)$, \\
$n{\ge}6$
\end{tabular}
& $(0,1),(1,1)$ &
\renewcommand{\tabcolsep}{0pt}%
\begin{tabular}{l}
$(\SL_{n-3}{\times}{\SL_3},$ \\
$(\KK^{3})^* {\oplus} \KK^{n-3}{\otimes}\KK^3)$
\end{tabular}
& $6$ &
\renewcommand{\tabcolsep}{0pt}%
\begin{tabular}{l}
$\alpha_1, \alpha_2, \alpha_3{+}\ldots{+}\alpha_{n-3},$ \\
$\alpha_{n-2}, \alpha_{n-1}, \alpha_n$
\end{tabular}
\\

\hline

\no\label{Dn11} &
$(n{-}1,n)$
& $(1,0),(0,1)$ & $(\SL_{n-1},\KK^{n-1} {\oplus} \KK^{n-1})$ & $3$ &
\renewcommand{\tabcolsep}{0pt}%
\begin{tabular}{l}
$\alpha_1, \alpha_2{+}\ldots{+}\alpha_{n-1},$ \\
$\alpha_2{+}\ldots{+}\alpha_{n-2}{+}\alpha_n$
\end{tabular}
\\

\hline

\no\label{Dn12} &
\renewcommand{\tabcolsep}{0pt}%
\begin{tabular}{l}
$(n{-}1,n)$, \\
$n{=}2m{+}1$
\end{tabular}
& $(1,0),(1,1)$ & $(\SL_{n-1},\KK^{n-1} {\oplus} \wedge^2 \KK^{n-1})$ & $n{-}1$ &
\renewcommand{\tabcolsep}{0pt}%
\begin{tabular}{l}
$\alpha_i{+}\alpha_{i+1}$ for $1{\le}i{\le}n{-}2$, \\
$\alpha_n$
\end{tabular}
\\

\hline

\no\label{Dn13} &
\renewcommand{\tabcolsep}{0pt}%
\begin{tabular}{l}
$(n{-}1,n)$, \\
$n{=}2m$
\end{tabular}
& $(1,0),(1,1)$ & $(\SL_{n-1},\KK^{n-1} {\oplus} \wedge^2 \KK^{n-1})$ & $n{-}1$ &
\renewcommand{\tabcolsep}{0pt}%
\begin{tabular}{l}
$\alpha_i{+}\alpha_{i+1}$ for $1{\le}i{\le}n{-}3$, \\
$\alpha_{n-2}{+}\alpha_n, \alpha_{n-1}$
\end{tabular}
\\

\hline
\end{tabular}
\end{center}
\end{table}

\begin{table}[h]
\caption{Cases with $|\Pi \setminus \Pi_L| = 2$ and $|\Psi|=2$ for type~$\mathsf F_4$} \label{table_F4}

\begin{center}
\renewcommand{\tabcolsep}{3pt}%
\begin{tabular}{|c|l|l|l|c|l|}
\hline
No. & \multicolumn{1}{|c|}{$(k,l)$} & \multicolumn{1}{|c|}{$(p,q),(r,s)$} & \multicolumn{1}{|c|}{($L', \mathfrak u$)} & $\rk$ & \multicolumn{1}{|c|}{$\Sigma_G(G/H)$} \\

\hline

\no & $(1,3)$ & $(1,0),(0,1)$ & $(\SL_{2} {\times} \SL_{2},\KK^{2} {\otimes} \KK^2 {\oplus} \KK^2)$ & $4$ & $\alpha_1, \alpha_2, \alpha_{3}, \alpha_4$  \\

\hline

\no & $(1,3)$ & $(0,1),(1,1)$ & $(\SL_{2} {\times} \SL_{2},\KK^{2} {\otimes} \KK^2 {\oplus} \KK^2)$ & $4$ & $\alpha_1, \alpha_2, \alpha_{3}, \alpha_4$  \\

\hline

\no & $(1,4)$ & $(0,1),(1,1)$ & $(\Sp_{4},\KK^{4} {\oplus} \KK^4)$ & $4$ & $\alpha_1 + \alpha_2, \alpha_2 + \alpha_3, \alpha_{3}, \alpha_4$  \\

\hline

\no & $(2,3)$ & $(1,0),(1,1)$ & $(\SL_{2} {\times} \SL_{2},\KK^{2} {\otimes} \KK^2 {\oplus} \KK^2)$ & $4$ & $\alpha_1, \alpha_2, \alpha_{3}, \alpha_4$  \\

\hline

\no & $(2,3)$ & $(0,1),(1,1)$ & $(\SL_{2} {\times} \SL_{2},\KK^{2} {\otimes} \KK^2 {\oplus} \KK^2)$ & $4$ & $\alpha_1, \alpha_2, \alpha_{3}, \alpha_4$  \\

\hline

\no & $(2,4)$ & $(0,1),(1,1)$ & $(\SL_{2} {\times} \SL_{2},\KK^{2} {\otimes} \KK^2 {\oplus} \KK^2)$ & $4$ & $\alpha_1, \alpha_2, \alpha_{3}, \alpha_4$  \\

\hline

\no & $(3,4)$ & $(1,0),(1,1)$ & $(\SL_{3},\KK^{3} {\oplus} \KK^3)$ & $3$ & $\alpha_1, \alpha_2 +\alpha_3, \alpha_4$  \\

\hline
\end{tabular}
\end{center}
\end{table}

\begin{table}[h]
\caption{Cases with $|\Pi \setminus \Pi_L| = 2$ and $|\Psi|=2$ for type~$\mathsf E_6$} \label{table_E6}

\begin{center}
\renewcommand{\tabcolsep}{3pt}%
\begin{tabular}{|c|l|l|l|c|l|}
\hline
No. & \multicolumn{1}{|c|}{$(k,l)$} & \multicolumn{1}{|c|}{$(p,q),(r,s)$} & \multicolumn{1}{|c|}{($L', \mathfrak u$)} & $\rk$ & \multicolumn{1}{|c|}{$\Sigma_G(G/H)$} \\

\hline

\no & $(1,2)$ & $(1,0),(0,1)$ & $(\SL_{5},\KK^{5} {\otimes} \wedge^2 \KK^5)$ & $5$ & $\alpha_1, \alpha_2, \alpha_3 {+} \alpha_4, \alpha_4 {+} \alpha_5, \alpha_5 {+} \alpha_6$ \\

\hline

\no & $(1,2)$ & $(1,0),(1,1)$ & $(\SL_{5},(\KK^{5})^* {\otimes} \wedge^2 \KK^5)$ & $5$ & $\alpha_1 {+} \alpha_3, \alpha_2, \alpha_3 {+} \alpha_4, \alpha_4 {+} \alpha_5, \alpha_6$ \\

\hline

\no & $(1,3)$ & $(0,1),(1,2)$ & $(\SL_{5},(\KK^{5})^* {\otimes} \wedge^2 \KK^5)$ & $5$ & $\alpha_1, \alpha_2, \alpha_3 {+} \alpha_4, \alpha_4 {+} \alpha_5, \alpha_5 {+} \alpha_6$ \\

\hline

\no & $(1,5)$ & $(1,0),(1,1)$ & $(\SL_{4}{\otimes}\SL_2, \KK^{4} {\otimes} \KK^2 {\oplus} (\KK^4)^*$ & $5$ & $\alpha_1, \alpha_3, \alpha_2 {+} \alpha_4, \alpha_4 {+} \alpha_5, \alpha_6$ \\

\hline

\no & $(1,6)$ & $(1,0),(0,1)$ & $(\Spin_{8}, \KK^{8}_+ {\oplus} \KK^8_-)$ & $5$ &
\renewcommand{\tabcolsep}{0pt}%
\begin{tabular}{l}
$\alpha_1, \alpha_2 {+} \alpha_3 {+} \alpha_4, \alpha_2 {+} \alpha_4 {+} \alpha_5$, \\
$\alpha_3 {+} \alpha_4 {+} \alpha_5, \alpha_6$
\end{tabular} \\

\hline

\no & $(1,6)$ & $(1,0),(1,1)$ & $(\Spin_{8}, \KK^{8}_+ {\oplus} \KK^8_-)$ & $5$ &
\renewcommand{\tabcolsep}{0pt}%
\begin{tabular}{l}
$\alpha_1, \alpha_2 {+} \alpha_3 {+} \alpha_4, \alpha_2 {+} \alpha_4 {+} \alpha_5$, \\
$\alpha_3 {+} \alpha_4 {+} \alpha_5, \alpha_6$
\end{tabular} \\

\hline

\no & $(2,3)$ & $(1,0),(0,1)$ & $(\SL_{4}{\otimes}\SL_2, \KK^{4} {\otimes} \KK^2 {\oplus}\KK^4)$ & $5$ & $\alpha_1, \alpha_2 {+} \alpha_4, \alpha_3 {+} \alpha_4, \alpha_5, \alpha_6$ \\

\hline

\no & $(2,3)$ & $(0,1),(1,2)$ & $(\SL_{4}{\otimes}\SL_2, \KK^{4} {\otimes} \KK^2 {\oplus} (\KK^4)^*)$ & $5$ & $\alpha_1, \alpha_2 {+} \alpha_4, \alpha_3, \alpha_4 {+} \alpha_5, \alpha_6$ \\

\hline
\end{tabular}
\end{center}
\end{table}

\begin{table}[h]
\caption{Cases with $|\Pi \setminus \Pi_L| = 2$ and $|\Psi|=2$ for type~$\mathsf E_7$} \label{table_E7}

\begin{center}
\renewcommand{\tabcolsep}{3pt}%
\begin{tabular}{|c|l|l|l|c|l|}
\hline
No. & \multicolumn{1}{|c|}{$(k,l)$} & \multicolumn{1}{|c|}{$(p,q),(r,s)$} & \multicolumn{1}{|c|}{($L', \mathfrak u$)} & $\rk$ & \multicolumn{1}{|c|}{$\Sigma_G(G/H)$} \\

\hline

\no & $(1,2)$ & $(1,0),(0,1)$ & $(\SL_{6}, \KK^{6} {\oplus} \wedge^2 \KK^6)$ & $6$ &
\renewcommand{\tabcolsep}{0pt}%
\begin{tabular}{l}
$\alpha_1 {+} \alpha_3, \alpha_2, \alpha_3 {+} \alpha_4, \alpha_4 {+} \alpha_5$, \\ $\alpha_5 {+} \alpha_6, \alpha_6 {+} \alpha_7$
\end{tabular} \\

\hline

\no & $(1,2)$ & $(0,1),(1,2)$ & $(\SL_{6}, (\KK^{6})^* {\oplus} \wedge^2 \KK^6)$ & $6$ &
\renewcommand{\tabcolsep}{0pt}%
\begin{tabular}{l}
$\alpha_1 {+} \alpha_3, \alpha_2, \alpha_3 {+} \alpha_4, \alpha_4 {+} \alpha_5$, \\ $\alpha_5 {+} \alpha_6, \alpha_6 {+} \alpha_7$
\end{tabular} \\

\hline

\no & $(1,5)$ & $(1,0),(1,1)$ & $(\SL_{4}{\otimes}{\SL_3}, \KK^{4} {\otimes} \KK^3 {\oplus} (\KK^4)^*)$ & $7$ & $\alpha_1, \alpha_2, \alpha_3, \alpha_4, \alpha_5, \alpha_6, \alpha_7$ \\

\hline

\no & $(2,3)$ & $(1,0),(0,1)$ & $(\SL_{5}{\otimes}{\SL_2}, \KK^{5} {\otimes} \KK^2 {\oplus} \KK^5)$ & $5$ & $\alpha_1, \alpha_2 {+} \alpha_4 {+} \alpha_5, \alpha_3 {+} \alpha_4 {+} \alpha_5, \alpha_6, \alpha_7$ \\

\hline

\no & $(2,4)$ & $(0,1),(1,3)$ & $(\SL_{4}{\otimes}{\SL_3}, \KK^{4} {\otimes} \KK^3 {\oplus} (\KK^4)^*)$ & $7$ & $\alpha_1, \alpha_2, \alpha_3, \alpha_4, \alpha_5, \alpha_6, \alpha_7$ \\

\hline

\no & $(2,5)$ & $(1,0),(0,1)$ & $(\SL_{4}{\otimes}{\SL_3}, \KK^{4} {\otimes} \KK^3 {\oplus} \KK^4)$ & $7$ & $\alpha_1, \alpha_2, \alpha_3, \alpha_4, \alpha_5, \alpha_6, \alpha_7$ \\

\hline

\no & $(2,6)$ & $(0,1),(1,2)$ & $(\SL_{5}{\otimes}{\SL_2}, \KK^{5} {\otimes} \KK^2 {\oplus} (\KK^5)^*)$ & $5$ & $\alpha_1, \alpha_2 {+} \alpha_4 {+} \alpha_5, \alpha_3 {+} \alpha_4 {+} \alpha_5, \alpha_6, \alpha_7$ \\

\hline

\no & $(2,7)$ & $(0,1),(1,1)$ & $(\SL_{6}, (\KK^{6})^* {\oplus} \wedge^2 \KK^6)$ & $6$ &
\renewcommand{\tabcolsep}{0pt}%
\begin{tabular}{l}
$\alpha_1 {+} \alpha_3, \alpha_2, \alpha_3 {+} \alpha_4, \alpha_4 {+} \alpha_5$, \\
$\alpha_5 {+} \alpha_6, \alpha_6 {+} \alpha_7$
\end{tabular} \\

\hline

\no & $(3,7)$ & $(0,1),(1,1)$ & $(\SL_{5}{\otimes}{\SL_2}, \KK^{5} {\otimes} \KK^2 {\oplus} (\KK^5)^*)$ & $5$ & $\alpha_1, \alpha_2 {+} \alpha_4 {+} \alpha_5, \alpha_3 {+} \alpha_4 {+} \alpha_5, \alpha_6, \alpha_7$ \\

\hline
\end{tabular}
\end{center}
\end{table}

\begin{table}[h]
\caption{Cases with $|\Pi \setminus \Pi_L| = 2$ and $|\Psi|=2$ for type~$\mathsf E_8$} \label{table_E8}

\begin{center}
\renewcommand{\tabcolsep}{3pt}%
\begin{tabular}{|c|l|l|l|c|l|}
\hline
No. & \multicolumn{1}{|c|}{$(k,l)$} & \multicolumn{1}{|c|}{$(p,q),(r,s)$} & \multicolumn{1}{|c|}{($L', \mathfrak u$)} & $\rk$ & \multicolumn{1}{|c|}{$\Sigma_G(G/H)$} \\

\hline

\no & $(1,2)$ & $(1,0),(0,1)$ & $(\SL_{7}, \KK^{7} {\oplus} \wedge^2 \KK^7)$ & $7$ &
\renewcommand{\tabcolsep}{0pt}%
\begin{tabular}{l}
$\alpha_1, \alpha_2, \alpha_3 {+} \alpha_4, \alpha_4 {+} \alpha_5$, \\ $\alpha_5 {+} \alpha_6, \alpha_6 {+} \alpha_7, \alpha_7 {+} \alpha_8$
\end{tabular} \\

\hline

\no & $(1,3)$ & $(0,1),(1,3)$ & $(\SL_{7}, (\KK^{7})^* {\oplus} \wedge^2 \KK^7)$ & $7$ &
\renewcommand{\tabcolsep}{0pt}%
\begin{tabular}{l}
$\alpha_1, \alpha_2, \alpha_3 {+} \alpha_4, \alpha_4 {+} \alpha_5$, \\ $\alpha_5 {+} \alpha_6, \alpha_6 {+} \alpha_7, \alpha_7 {+} \alpha_8$
\end{tabular} \\

\hline

\no & $(1,5)$ & $(1,0),(1,1)$ & $(\SL_{4}{\otimes}{\SL_4}, \KK^{4} {\otimes} \KK^4 {\oplus} (\KK^4)^*)$ & $8$ & $\alpha_1, \alpha_2, \alpha_3, \alpha_4, \alpha_5, \alpha_6, \alpha_7, \alpha_8$ \\

\hline

\no & $(2,3)$ & $(1,0),(0,1)$ & $(\SL_{6}{\otimes}{\SL_2}, \KK^{6} {\otimes} \KK^2 {\oplus} \KK^6)$ & $5$ &
\renewcommand{\tabcolsep}{0pt}%
\begin{tabular}{l}
$\alpha_1, \alpha_2 {+} \alpha_4 {+} \alpha_5 {+} \alpha_6$, \\
$\alpha_3 {+} \alpha_4 {+} \alpha_5 {+} \alpha_6, \alpha_7, \alpha_8$
\end{tabular} \\

\hline

\no & $(2,5)$ & $(1,0),(0,1)$ & $(\SL_{4}{\otimes}{\SL_4}, \KK^{4} {\otimes} \KK^4 {\oplus} \KK^4)$ & $8$ & $\alpha_1, \alpha_2, \alpha_3, \alpha_4, \alpha_5, \alpha_6, \alpha_7, \alpha_8$ \\

\hline

\no & $(2,5)$ & $(0,1),(1,3)$ & $(\SL_{4}{\otimes}{\SL_4}, \KK^{4} {\otimes} \KK^4 {\oplus} (\KK^4)^*)$ & $8$ & $\alpha_1, \alpha_2, \alpha_3, \alpha_4, \alpha_5, \alpha_6, \alpha_7, \alpha_8$ \\

\hline

\no & $(2,7)$ & $(0,1),(1,2)$ & $(\SL_{6}{\otimes}{\SL_2}, \KK^{6} {\otimes} \KK^2 {\oplus} (\KK^6)^*)$ & $5$ &
\renewcommand{\tabcolsep}{0pt}%
\begin{tabular}{l}
$\alpha_1, \alpha_2 {+} \alpha_4 {+} \alpha_5 {+} \alpha_6$, \\
$\alpha_3 {+} \alpha_4 {+} \alpha_5 {+} \alpha_6, \alpha_7, \alpha_8$
\end{tabular} \\

\hline

\no & $(2,8)$ & $(0,1),(1,1)$ & $(\SL_{7}, (\KK^{7})^* {\oplus} \wedge^2 \KK^7)$ & $7$ &
\renewcommand{\tabcolsep}{0pt}%
\begin{tabular}{l}
$\alpha_1, \alpha_2, \alpha_3 {+} \alpha_4, \alpha_4 {+} \alpha_5$, \\ $\alpha_5 {+} \alpha_6, \alpha_6 {+} \alpha_7, \alpha_7 {+} \alpha_8$
\end{tabular} \\

\hline

\no & $(3,8)$ & $(0,1),(1,1)$ & $(\SL_{6}{\otimes}{\SL_2}, \KK^{6} {\otimes} \KK^2 {\oplus} (\KK^6)^*)$ & $5$ &
\renewcommand{\tabcolsep}{0pt}%
\begin{tabular}{l}
$\alpha_1, \alpha_2 {+} \alpha_4 {+} \alpha_5 {+} \alpha_6$, \\
$\alpha_3 {+} \alpha_4 {+} \alpha_5 {+} \alpha_6, \alpha_7, \alpha_8$
\end{tabular} \\

\hline
\end{tabular}
\end{center}
\end{table}

\subsection{Proofs of Theorems~\ref{thm_par1psi2}(\ref{thm_par1psi2_a}) and~\ref{thm_par2psi2}(\ref{thm_par2psi2_a}) for the classical types and~\texorpdfstring{$\mathsf G_2$}{G\_2}}

In this section, we prove Theorems~\ref{thm_par1psi2}(\ref{thm_par1psi2_a}) and~\ref{thm_par2psi2}(\ref{thm_par2psi2_a}) for the case where $\mathsf X_n$ is one of $\mathsf A_n$ ($n \ge 1$), $\mathsf B_n$ ($n \ge 3$), $\mathsf C_n$ ($n \ge 2$), $\mathsf D_n$ ($n \ge 4$), or~$\mathsf G_2$.

\begin{proof}[Proof of Theorem~\textup{\ref{thm_par1psi2}(\ref{thm_par1psi2_a})}]
It follows from Proposition~\ref{prop_|Psi|=2}(\ref{prop_|Psi|=2_c}) that $p = 1$.
Next, Lemma~\ref{lemma_sum} yields $q \in \lbrace 2, 3 \rbrace$.
Note that in any case $q$ does not exceed the coefficient of~$\alpha_k$ in the expression for the highest root of~$\Delta^+$ as a linear combination of the simple roots.
The fact that the $L$-module $\mathfrak g(\overline \alpha_k) \oplus \mathfrak g (q \overline \alpha_k)$ is spherical implies that so is $\mathfrak g(\overline \alpha_k)$, hence the pair $(\mathsf X_n,k)$ appears in Table~\ref{table_par1psi1}.
If $(\mathsf X_n, k) = (\mathsf G_2, 1)$ and $q=2$, then $\dim \mathfrak g(2\overline \alpha_1) = 1$, hence the $L$-module $\mathfrak g(\overline \alpha_1) \oplus \mathfrak g(2\overline \alpha_1)$ is not strictly indecomposable.
If $(\mathsf X_n, k) = (\mathsf G_2, 1)$ and $q=3$, then $\mathfrak g(\overline \alpha_1) \oplus \mathfrak g(3\overline \alpha_1) \simeq \KK^2 \oplus \KK^2$ as $L'$-modules; since $\dim C = 1$, we see that $\mathfrak g(\overline \alpha_1) \oplus \mathfrak g(3\overline \alpha_1)$ is not spherical as an~$L$-module.
If $\mathsf X_n = \mathsf A_n$, then the highest root of~$\Delta^+$ is $\alpha_1 + \ldots + \alpha_n$, hence $\Phi^+$ cannot contain $C$-roots of the form $q \overline \alpha_k$ for $q \ge 2$.
In the remaining cases the highest root of $\Delta^+$ has only coefficients~$1$ and~$2$, which necessarily implies~$q = 2$.
If $(\mathsf X_n, k) = (\mathsf B_n, 1)$, then $\Phi^+ = \lbrace \overline \alpha_n \rbrace$ and hence $2\overline \alpha_k \notin \Phi^+$.
If $(\mathsf X_n, k) = (\mathsf B_n, n)$, then $\Phi^+ = \lbrace \overline \alpha_n, 2  \overline \alpha_n \rbrace$ and the pair $(L, \mathfrak g(\overline \alpha_n) \oplus \mathfrak g(2 \overline \alpha_n))$ is equivalent to $(\GL_{n}, \KK^n \oplus \wedge^2 \KK^n)$.
Since the latter module is strictly indecomposable and spherical, we get case~\ref{Bnn2} of Table~\ref{table_par1psi2}.
If $(\mathsf X_n,k) = (\mathsf C_n, k)$ with $1 \le k \le n-1$, then $\Phi^+ = \lbrace \overline \alpha_k, 2 \overline \alpha_k \rbrace$ and the pair $(L, \mathfrak g(\overline \alpha_n) \oplus \mathfrak g(2 \overline \alpha_n))$ is equivalent to $(\GL_k \times \Sp_{2n-2k}, \KK^k \otimes \KK^{2n-2k} \oplus \mathrm S^2 \KK^k)$.
The latter module is strictly indecomposable if and only if $k \ge 2$, in which case it is not spherical.
If $(\mathsf X_n, k) = (\mathsf C_n, n)$, then $\Phi^+ = \lbrace \overline \alpha_n \rbrace$ and hence $2\overline \alpha_n \notin \Phi^+$.
If $(\mathsf X_n, k) = (\mathsf D_n, k)$ with $k \in \lbrace 1, n \rbrace$, then $\Phi^+ = \lbrace \overline \alpha_k \rbrace$ and hence $2\overline \alpha_k \notin \Phi^+$.
\end{proof}

\begin{proof}[Proof of Theorem~\textup{\ref{thm_par2psi2}(\ref{thm_par2psi2_a})}]
Let $\lambda, \mu \in \Phi^+$ be two distinct elements such that $\Psi = \lbrace \lambda, \mu \rbrace$.
If $n = 2$, then $\dim \mathfrak g(\nu) = 1$ for all $\nu \in \Phi^+$, hence the $L$-module $\mathfrak g(\lambda) \oplus \mathfrak g(\mu)$ cannot be strictly indecomposable.
So $G$ cannot be of type~$\mathsf G_2$ and in what follows we assume $n \ge 3$.
Since $\Supp \Psi = \Pi$, the elements $\lambda$ and $\mu$ are not proportional to each other, hence $\mathfrak g(\lambda) \oplus \mathfrak g(\mu)$ is a saturated $L$-module.
Thanks to Proposition~\ref{prop_|Psi|=2}(\ref{prop_|Psi|=2_c}), up to interchanging $\lambda$ and~$\mu$ we may assume that $\lambda \in \lbrace \overline \alpha_k, \overline \alpha_l \rbrace$.
Using Lemma~\ref{lemma_sum} we find that, up to interchanging the summands, $\mu = \lambda + (\mu-\lambda)$ is the unique expression of $\mu$ as a sum of two elements of~$\Phi^+$.
In what follows we treat each possibility for~$\mathsf X_n$ separately.

Suppose $\mathsf X_n = \mathsf A_n$ with $n \ge 3$.
Then $\Phi^+ = \lbrace \overline \alpha_k, \overline \alpha_l, \overline \alpha_k + \overline \alpha_l \rbrace$.
We have $L' \simeq \SL_k \times \SL_{l-k} \times \SL_{n+1-l}$; for $i=1,2,3$ let $V_i$ denote the tautological representation of the $i$th factor of~$L'$.
Then, as $L'$-modules, $\mathfrak g(\overline \alpha_k)$, $\mathfrak g(\overline \alpha_l)$, $\mathfrak g(\overline \alpha_k + \overline \alpha_l) \rbrace$ are isomorphic to $V_1 \otimes V_2^*$, $V_2 \otimes V_3^*$, $V_1 \otimes V_3^*$, respectively.
Up to an automorphism of the Dynkin diagram, we may assume that $\lambda = \overline \alpha_k$.
Then $\mu \in \lbrace \overline \alpha_l, \overline \alpha_k + \overline \alpha_l \rbrace$.
If $\mu = \overline \alpha_l$, then the $L$-module $\mathfrak g(\lambda) \oplus \mathfrak g(\mu)$ is strictly indecomposable if and only if $l-k \ge 2$, in which case it is spherical if and only if $l-k = 2$ or $\min(k,n+1-l) = 1$.
Up to an automorphism of the Dynkin diagram, we obtain cases~\ref{An1}, \ref{An2}, \ref{An7} of Table~\ref{table_A}.
If $\mu = \overline \alpha_k + \overline \alpha_l$, then the $L$-module $\mathfrak g(\lambda) \oplus \mathfrak g(\mu)$ is strictly indecomposable if and only if $k \ge 2$, in which case it is spherical if and only if $k = 2$ or $\min(l-k,n+1-l) = 1$.
This yields all the remaining cases in Table~\ref{table_A}.

Suppose $\mathsf X_n = \mathsf B_n$ with $n \ge 3$.
Then
\[
\lbrace \overline \alpha_k, \overline \alpha_l, \overline \alpha_k + \overline \alpha_l, \overline \alpha_k + 2\overline \alpha_l \rbrace \subset \Phi^+ \subset \lbrace \overline \alpha_k, \overline \alpha_l, \overline \alpha_k + \overline \alpha_l, \overline \alpha_k + 2\overline \alpha_l, 2\overline \alpha_l, 2\overline \alpha_k + 2\overline \alpha_l \rbrace,
\]
$2\overline \alpha_l \in \Phi^+$ if and only if $l - k \ge 2$, and $2\overline \alpha_k + 2\overline \alpha_l \in \Phi^+$ if and only if $k \ge 2$.
We have $L \simeq \GL_k \times \GL_{l-k} \times \SO_{2n-2l+1}$; for $i=1,2,3$ let $V_i$ denote the tautological representation of the $i$th factor of~$L$.
Then, as $L$-modules, $\mathfrak g(\overline \alpha_k)$, $\mathfrak g(\overline \alpha_l)$, $\mathfrak g(\overline \alpha_k + \overline \alpha_l)$,  $\mathfrak g(\overline \alpha_k + 2\overline \alpha_l)$, $\mathfrak g(2\overline \alpha_l)$, $\mathfrak g(2\overline \alpha_k + 2\overline \alpha_l)$ are isomorphic to $V_1 \otimes V_2^*$, $V_2 \otimes V_3$, $V_1 \otimes V_3$, $V_1 \otimes V_2$, $\wedge^2 V_2$, $\wedge^2 V_1$, respectively.
In the following, we consider two cases.

Case~1: $\lambda = \overline \alpha_k$.
Then $\mu \in \lbrace \overline \alpha_l, \overline \alpha_k + \overline \alpha_l \rbrace$.
If $\mu = \overline \alpha_l$, then the $L$-module $\mathfrak g(\lambda) \oplus \mathfrak g(\mu)$ is strictly indecomposable if and only if $l-k \ge 2$, in which case it is spherical if and only if $l=n$.
Thus we obtain cases~\ref{Bn1}, \ref{Bn2} of Table~\ref{table_B}.
If $\mu = \overline \alpha_k + \overline \alpha_l$, then the $L$-module $\mathfrak g(\lambda) \oplus \mathfrak g(\mu)$ is strictly indecomposable if and only if $k \ge 2$, in which case it is spherical if and only if $l=n$.
This yields cases~\ref{Bn3}, \ref{Bn4} of Table~\ref{table_B}.

Case~2: $\lambda = \overline \alpha_l$.
We may assume $\mu \ne \overline \alpha_k$, which leaves the possibilities $\mu = \overline \alpha_k + \overline \alpha_l$ or $\mu = \overline \alpha_k + 2\overline \alpha_l$ (the latter one is realized if and only if $2\overline \alpha_l \notin \Phi^+$, which is equivalent to $l-k = 1$).
If $\mu = \overline \alpha_k + \overline \alpha_l$, then the $L$-module $\mathfrak g(\lambda) \oplus \mathfrak g(\mu)$ is strictly indecomposable if and only if $l \le n-1$, in which case it is not spherical.
If $\mu = \overline \alpha_k + 2\overline \alpha_l$ and $l-k=1$, then the $L$-module $\mathfrak g(\lambda) \oplus \mathfrak g(\mu)$ is not strictly indecomposable.

Suppose $\mathsf X_n = \mathsf C_n$ with $n \ge 3$ and $l \le n-1$.
Then
\[
\Phi^+ = \lbrace \overline \alpha_k, \overline \alpha_l, \overline \alpha_k + \overline \alpha_l, \overline \alpha_k + 2\overline \alpha_l, 2\overline \alpha_l, 2\overline \alpha_k + 2\overline \alpha_l \rbrace.
\]
We have $L \simeq \GL_k \times \GL_{l-k} \times \Sp_{2n-2l}$; for $i=1,2,3$ let $V_i$ denote the tautological representation of the $i$th factor of~$L$.
Then, as $L$-modules, $\mathfrak g(\overline \alpha_k)$, $\mathfrak g(\overline \alpha_l)$, $\mathfrak g(\overline \alpha_k + \overline \alpha_l)$,  $\mathfrak g(\overline \alpha_k + 2\overline \alpha_l)$, $\mathfrak g(2\overline \alpha_l)$, $\mathfrak g(2\overline \alpha_k + 2\overline \alpha_l)$ are isomorphic to $V_1 \otimes V_2^*$, $V_2 \otimes V_3$, $V_1 \otimes V_3$, $V_1 \otimes V_2$, $\mathrm S^2 V_2$, $\mathrm S^2 V_1$, respectively.
In the following, we consider two cases.

Case~1: $\lambda = \overline \alpha_k$.
Then $\mu \in \lbrace \overline \alpha_l, \overline \alpha_k + \overline \alpha_l \rbrace$.
If $\mu = \overline \alpha_l$, then the $L$-module $\mathfrak g(\lambda) \oplus \mathfrak g(\mu)$ is strictly indecomposable if and only if $l-k \ge 2$, in which case it is spherical if and only if $l-k=2$ or $k = n-l = 1$.
We obtain cases~\ref{Cn3}--\ref{Cn5}, \ref{Cn12}, \ref{Cn14} in Table~\ref{table_C}.
If $\mu = \overline \alpha_k + \overline \alpha_l$, then the $L$-module $\mathfrak g(\lambda) \oplus \mathfrak g(\mu)$ is strictly indecomposable if and only if $k \ge 2$, in which case it is spherical if and only if $k=2$ or $l-k = n-l = 1$.
This yields cases~\ref{Cn8}--\ref{Cn11}, \ref{Cn15} in Table~\ref{table_C}.

Case~2: $\lambda = \overline \alpha_l$.
We may assume $\mu \ne \overline \alpha_k$, which leaves the only possibility $\mu = \overline \alpha_k + \overline \alpha_l$.
As $l \le n-1$, the $L$-module $\mathfrak g(\lambda) \oplus \mathfrak g(\mu)$ is strictly indecomposable; it is spherical if and only if $l=n-1$ or $k=l-k=1$.
Thus we obtain cases~\ref{Cn1}, \ref{Cn2}, \ref{Cn6}, \ref{Cn13}, \ref{Cn16} in Table~\ref{table_C}.

Suppose $\mathsf X_n = \mathsf C_n$ with $n \ge 3$ and $l = n$.
Then $\Phi^+ = \lbrace \overline \alpha_k, \overline \alpha_n, \overline \alpha_k + \overline \alpha_n, 2\overline \alpha_k + \overline \alpha_n \rbrace$.
We have $L \simeq \GL_k \times \GL_{n-k}$; for $i=1,2$ let $V_i$ denote the tautological representation of the $i$th factor of~$L$.
Then, as $L$-modules, $\mathfrak g(\overline \alpha_k)$, $\mathfrak g(\overline \alpha_n)$, $\mathfrak g(\overline \alpha_k + \overline \alpha_n)$, $\mathfrak g(2\overline \alpha_k + \overline \alpha_n)$ are isomorphic to $V_1 \otimes V_2^*$, $\mathrm{S}^2V_2$, $V_1 \otimes V_2$, $\mathrm S^2 V_1$, respectively.
In the following, we consider two cases.

Case~1: $\lambda = \overline \alpha_k$.
Then $\mu \in \lbrace \overline \alpha_n, \overline \alpha_k + \overline \alpha_n, 2\overline \alpha_k + \overline \alpha_n \rbrace$.
If $\mu = \overline \alpha_n$, then the $L$-module $\mathfrak g(\lambda) \oplus \mathfrak g(\mu)$ is strictly indecomposable if and only if $n-k \ge 2$, in which case it is not spherical.
If $\mu = \overline \alpha_k + \overline \alpha_n$, then the $L$-module $\mathfrak g(\lambda) \oplus \mathfrak g(\mu)$ is strictly indecomposable; it is spherical if and only if $\min(k,n-k) = 1$.
This yields cases~\ref{Cn7}, \ref{Cn17} in Table~\ref{table_C}.
If $\mu = 2\overline \alpha_k + \overline \alpha_n$, then the $L$-module $\mathfrak g(\lambda) \oplus \mathfrak g(\mu)$ is strictly indecomposable if and only if $k \ge 2$, in which case it is not spherical.

Case~2: $\lambda = \overline \alpha_n$.
We may assume $\mu \ne \overline \alpha_k$, which leaves the only possibility $\mu = \overline \alpha_k + \overline \alpha_n$.
Then the $L$-module $\mathfrak g(\lambda) \oplus \mathfrak g(\mu)$ is strictly indecomposable if and only if $n-k \ge 2$, in which case it is not spherical.

Suppose $\mathsf X_n = \mathsf D_n$ with $n \ge 4$ and $l \le n-2$.
Then
\[
\lbrace \overline \alpha_k, \overline \alpha_l, \overline \alpha_k + \overline \alpha_l, \overline \alpha_k + 2\overline \alpha_l \rbrace \subset \Phi^+ \subset \lbrace \overline \alpha_k, \overline \alpha_l, \overline \alpha_k + \overline \alpha_l, \overline \alpha_k + 2\overline \alpha_l, 2\overline \alpha_l, 2\overline \alpha_k + 2\overline \alpha_l \rbrace,
\]
$2\overline \alpha_l \in \Phi^+$ if and only if $l - k \ge 2$, and $2\overline \alpha_k + 2\overline \alpha_l \in \Phi^+$ if and only if $k \ge 2$.
We have $L \simeq \GL_k \times \GL_{l-k} \times \SO_{2n-2l}$; for $i=1,2,3$ let $V_i$ denote the tautological representation of the $i$th factor of~$L$.
Then, as $L$-modules, $\mathfrak g(\overline \alpha_k)$, $\mathfrak g(\overline \alpha_l)$, $\mathfrak g(\overline \alpha_k + \overline \alpha_l)$,  $\mathfrak g(\overline \alpha_k + 2\overline \alpha_l)$, $\mathfrak g(2\overline \alpha_l)$, $\mathfrak g(2\overline \alpha_k + 2\overline \alpha_l)$ are isomorphic to $V_1 \otimes V_2^*$, $V_2 \otimes V_3$, $V_1 \otimes V_3$, $V_1 \otimes V_2$, $\wedge^2 V_2$, $\wedge^2 V_1$, respectively.
In the following, we consider two cases.

Case~1: $\lambda = \overline \alpha_k$.
Then $\mu \in \lbrace \overline \alpha_l, \overline \alpha_k + \overline \alpha_l \rbrace$.
If $\mu = \overline \alpha_l$, then the $L$-module $\mathfrak g(\lambda) \oplus \mathfrak g(\mu)$ is strictly indecomposable if and only if $l-k \ge 2$, in which case it is not spherical.
If $\mu = \overline \alpha_k + \overline \alpha_l$, then the $L$-module $\mathfrak g(\lambda) \oplus \mathfrak g(\mu)$ is strictly indecomposable if and only if $k \ge 2$, in which case it is not spherical.

Case~2: $\lambda = \overline \alpha_l$.
We may assume $\mu \ne \overline \alpha_k$, which leaves the possibilities $\mu = \overline \alpha_k + \overline \alpha_l$ or $\mu = \overline \alpha_k + 2\overline \alpha_l$ (the latter one is realized if and only if $2\overline \alpha_l \notin \Phi^+$, which is equivalent to $l-k = 1$).
If $\mu = \overline \alpha_k + \overline \alpha_l$, then the $L$-module $\mathfrak g(\lambda) \oplus \mathfrak g(\mu)$ is strictly indecomposable but not spherical.
If $\mu = \overline \alpha_k + 2\overline \alpha_l$ and $l-k=1$, then the $L$-module $\mathfrak g(\lambda) \oplus \mathfrak g(\mu)$ is not strictly indecomposable.

Suppose $\mathsf X_n = \mathsf D_n$ with $n \ge 4$, $k \le n-2$, and $l \in \lbrace n-1,n \rbrace$.
Up to an automorphism of the Dynkin diagram, we may assume that $l = n$.
Then
\[
\lbrace \overline \alpha_k, \overline \alpha_n, \overline \alpha_k + \overline \alpha_n \rbrace \subset \Phi^+ \subset \lbrace \overline \alpha_k, \overline \alpha_n, \overline \alpha_k + \overline \alpha_n, 2\overline \alpha_k + \overline \alpha_n \rbrace
\]
and $2\overline \alpha_k + \overline \alpha_n \in \Phi^+$ if and only if $k \ge 2$.
We have $L \simeq \GL_k \times \GL_{n-k}$; for $i=1,2$ let $V_i$ denote the tautological representation of the $i$th factor of~$L$.
Then, as $L$-modules, $\mathfrak g(\overline \alpha_k)$, $\mathfrak g(\overline \alpha_n)$, $\mathfrak g(\overline \alpha_k + \overline \alpha_n)$, $\mathfrak g(2\overline \alpha_k + \overline \alpha_n)$ are isomorphic to $V_1 \otimes V_2^*$, $\wedge^2 V_2$, $V_1 \otimes V_2$, $\wedge^2 V_1$, respectively.
In the following, we consider two cases.

Case~1: $\lambda = \overline \alpha_k$.
Then $\mu \in \lbrace \overline \alpha_n, \overline \alpha_k + \overline \alpha_n, 2\overline \alpha_k + \overline \alpha_n \rbrace$.
If $\mu = \overline \alpha_n$, then the $L$-module $\mathfrak g(\lambda) \oplus \mathfrak g(\mu)$ is strictly indecomposable if and only if $n-k \ge 3$, in which case it is spherical if $k = 1$ or $n - k = 3$.
Then we obtain cases~\ref{Dn1}, \ref{Dn2}, \ref{Dn5}, \ref{Dn9} in Table~\ref{table_D}.
If $\mu = \overline \alpha_k + \overline \alpha_n$, then the $L$-module $\mathfrak g(\lambda) \oplus \mathfrak g(\mu)$ is strictly indecomposable; as $n-k \ge 2$, it is spherical if and only if $k = 1$.
This yields case~\ref{Dn3} in Table~\ref{table_D}.
If $\mu = 2\overline \alpha_k + \overline \alpha_n$, then the $L$-module $\mathfrak g(\lambda) \oplus \mathfrak g(\mu)$ is strictly indecomposable if and only if $k \ge 3$; as $n-k \ge 2$, it is spherical if and only if $k = 3$.
This yields cases~\ref{Dn7}, \ref{Dn8} in Table~\ref{table_D}.

Case~2: $\lambda = \overline \alpha_n$.
We may assume $\mu \ne \overline \alpha_k$, which leaves the only possibility $\mu = \overline \alpha_k + \overline \alpha_n$.
Then the $L$-module $\mathfrak g(\lambda) \oplus \mathfrak g(\mu)$ is strictly indecomposable if and only if $n-k \ge 3$; in which case it is spherical if and only if $k=1$ or $n-k = 3$.
Thus we obtain cases~\ref{Dn4}, \ref{Dn6}, \ref{Dn10} in Table~\ref{table_D}.

Suppose $\mathsf X_n = \mathsf D_n$ with $n \ge 4$, $k = n-1$, and $l = n$.
Then $\Phi^+ = \lbrace \overline \alpha_{n-1}, \overline \alpha_n, \overline \alpha_{n-1} + \overline \alpha_n \rbrace$.
We have $L \simeq \GL_{n-1} \times \GL_1$; for $i=1,2$ let $V_i$ denote the tautological representation of the $i$th factor of~$L$.
Then, as $L$-modules, $\mathfrak g(\overline \alpha_{n-1})$, $\mathfrak g(\overline \alpha_n)$, $\mathfrak g(\overline \alpha_{n-1} + \overline \alpha_n) \rbrace$ are isomorphic to $V_1 \otimes V_2^*$, $V_1 \otimes V_2$, $\wedge^2 V_1$, respectively.
Up to an automorphism of the Dynkin diagram, we may assume that $\lambda = \overline \alpha_{n-1}$.
Then $\mu \in \lbrace \overline \alpha_n, \overline \alpha_{n-1} + \overline \alpha_n \rbrace$.
If $\mu = \overline \alpha_n$, then the $L$-module $\mathfrak g(\lambda) \oplus \mathfrak g(\mu)$ is strictly indecomposable and spherical.
Then we obtain case~\ref{Dn11} in Table~\ref{table_D}.
If $\mu = \overline \alpha_{n-1} + \overline \alpha_n$, then the $L$-module $\mathfrak g(\lambda) \oplus \mathfrak g(\mu)$ is also strictly indecomposable and spherical.
This yields cases~\ref{Dn12}, \ref{Dn13} in Table~\ref{table_D}.
\end{proof}

\subsection{Proofs of Theorems~\ref{thm_par1psi2}(\ref{thm_par1psi2_b}) and~\ref{thm_par2psi2}(\ref{thm_par2psi2_b}) for the classical types}

In each of the cases classified in Theorems~\ref{thm_par1psi2}(\ref{thm_par1psi2_a}) and~\ref{thm_par2psi2}(\ref{thm_par2psi2_a}), the computation of the set $\Sigma_G(G/H)$ for the corresponding spherical subgroup $H \subset G$ with $\Psi = \lbrace \lambda_1, \lambda_2 \rbrace$ is done according to the following strategy.
First, by Proposition~\ref{prop_num_sr} one has $|\Sigma_G(G/H)| = \rk_G(\mathfrak u)$, and we take the latter number from~\cite[\S\,5]{Kn98}.
Second, we compute the subgroups $N_1 = N(\lambda_1)$ and~$N_2 = N(\lambda_2)$ (see~\S\,\ref{subsec_degen_add}).
Thanks to Theorem~\ref{thm_degen}(\ref{thm_degen_c}) and Proposition~\ref{prop_sr_union}, one has $|\Sigma_G(G/N_1)| = |\Sigma_G(G/N_2)| = |\Sigma_G(G/H)| - 1$ and $\Sigma_G(G/H) = \Sigma_G(G/N_1) \cup \Sigma_G(G/N_2)$, so it remains to compute $\Sigma_G(G/N_1)$ and $\Sigma_G(G/N_2)$ separately.
Third, for $i=1,2$ we identify all components of the SM-decomposition of $\Psi(N_i)$ and apply Algorithm~\ref{alg_H_i} for each of them.
As a result, we obtain a collection of new spherical subgroups such that each of them has smaller rank and trivial SM-decomposition.
Then the computation of $\Sigma_G(G/H)$ is completed by induction.

Below we provide two examples of implementing the above algorithm.
The abbreviation ``case~M.N($n$)'' refers to case~N in Table~M with rank of~$G$ equal to~$n$.
In the abbreviation~``Case~M.N($n,k$)'', the meanings of M, N, and $n$ are the same as above and $k$ is the additional parameter appearing in the corresponding case.
For $i=1,2$, $\Pi_i$ is the set of simple roots of the Levi subgroup of~$N_i$, $\Psi_i = \Psi(N_i)$ is the corresponding set presented in the form of its SM-decomposition, and $N_{ij}$ is the subgroup obtained from $N_i$ by applying Algorithm~\ref{alg_H_i} for the $j$th component of the SM-decomposition of~$\Psi_i$.
For each subgroup~$N_{ij}$, the notations $\Pi_{ij}$ and $\Psi_{ij}$ have similar meanings.

\begin{example}
Case~\ref{table_par1psi2}.\ref{Bnn2}$(n)$, $n \ge 3$. We have $\Pi \setminus \Pi_L = \lbrace \alpha_n \rbrace$ and $\Psi = \lbrace \overline \alpha_n, 2\overline \alpha_n \rbrace$.

Data for~$N_1$: $\Pi \setminus \Pi_1 = \lbrace \alpha_1, \alpha_n \rbrace$; $\Psi_1  = \lbrace \overline \alpha_n, 2\overline \alpha_n \rbrace$ for $n \ge 4$, $\Psi_1 = \lbrace \overline \alpha_n \rbrace \cup \lbrace 2\overline \alpha_n \rbrace$ for $n = 3$.
If $n \ge 4$, then, up to reduction of the ambient group, we get case~\ref{table_par1psi2}.\ref{Bnn2}$(n-1)$.

Data for $N_{11}$ ($n=3$): $\Pi \setminus \Pi_{11} = \lbrace \alpha_1, \alpha_3 \rbrace$, $\Psi_{11} = \lbrace \overline \alpha_3 \rbrace$.
Up to reduction of the ambient group, we get case~\ref{table_par1psi1}.\ref{No4}$(2)$, so $\Sigma_G(G/N_{11}) = \lbrace \alpha_2 + \alpha_3 \rbrace$.

Data for $N_{12}$ ($n=3$): $\Pi \setminus \Pi_{12} = \lbrace \alpha_1, \alpha_2, \alpha_3 \rbrace$, $\Psi_{12} = \lbrace \overline \alpha_3 \rbrace$.
Up to reduction of the ambient group, we get case~\ref{table_par1psi1}.\ref{No1}$(1,1)$, so $\Sigma_G(G/N_{12}) = \lbrace \alpha_3 \rbrace$.

Data for~$N_2$: $\Pi \setminus \Pi_2 = \lbrace \alpha_2, \alpha_n \rbrace$, $\Psi_2 = \lbrace \overline \alpha_2 + \overline \alpha_n \rbrace \cup \lbrace \overline \alpha_n, 2\overline \alpha_n \rbrace$ for $n \ge 5$, $\Psi = \lbrace \overline \alpha_2 + \overline \alpha_n \rbrace \cup \lbrace \overline \alpha_n \rbrace \cup \lbrace 2\overline \alpha_n \rbrace$ for $n = 4$, and $\lbrace \overline \alpha_2 + \overline \alpha_n \rbrace \cup \lbrace \overline \alpha_n \rbrace$ for $n = 3$.

Data for~$N_{21}$ ($n \ge 4$): $\Pi \setminus \Pi_{21} = \lbrace \alpha_2, \alpha_3, \alpha_n \rbrace$, $\Psi_{21} = \lbrace \overline \alpha_2 \rbrace$. Up to reduction of the ambient group, we get case~\ref{table_par1psi1}.\ref{No1}$(2,1)$, so $\Sigma_G(G/N_{21}) = \lbrace \alpha_1 + \alpha_2 \rbrace$.

Data for~$N_{21}$ ($n = 3$): $\Pi \setminus \Pi_{21} = \lbrace \alpha_2, \alpha_3 \rbrace$, $\Psi_{21} = \lbrace \overline \alpha_2 \rbrace$. Up to reduction of the ambient group, we again get case~\ref{table_par1psi1}.\ref{No1}$(2,1)$, so $\Sigma_G(G/N_{21}) = \lbrace \alpha_1 + \alpha_2 \rbrace$.

Data for~$N_{22}$ ($n \ge 5$): $\Pi \setminus \Pi_{22} = \lbrace \alpha_2, \alpha_n \rbrace$, $\Psi_{22} = \lbrace \overline \alpha_n, 2\overline \alpha_n \rbrace$.
Up to reduction of the ambient group, we get the same case~\ref{table_par1psi2}.\ref{Bnn2}$(n-2)$.

Data for~$N_{22}$ ($n = 4$): $\Pi \setminus \Pi_{22} = \lbrace \alpha_2, \alpha_4 \rbrace$, $\Psi_{22} = \lbrace \overline \alpha_4 \rbrace$.
Up to reduction of the ambient group, we get case~\ref{table_par1psi1}.\ref{No4}$(2)$, so $\Sigma_G(G/N_{11}) = \lbrace \alpha_3 + \alpha_4 \rbrace$.

Data for~$N_{22}$ ($n = 3$): $\Pi \setminus \Pi_{22} = \lbrace \alpha_2, \alpha_3 \rbrace$, $\Psi_{22} = \lbrace \overline \alpha_3 \rbrace$.
Up to reduction of the ambient group, we get case~\ref{table_par1psi1}.\ref{No1}$(1,1)$, so $\Sigma_G(G/N_{22}) = \lbrace \alpha_3 \rbrace$.

Data for~$N_{23}$ ($n = 4$): $\Pi \setminus \Pi_{23} = \lbrace \alpha_2, \alpha_3, \alpha_4 \rbrace$, $\Psi_{23} = \lbrace \overline \alpha_4 \rbrace$.
Up to reduction of the ambient group, we get case~\ref{table_par1psi1}.\ref{No1}$(1,1)$, so $\Sigma_G(G/N_{23}) = \lbrace \alpha_4 \rbrace$.

Using the information given above, one proves by induction on~$n$ that $\Sigma_G(G/H) = \lbrace \alpha_i + \alpha_{i+1} \mid 1 \le i \le n-1 \rbrace \cup \lbrace \alpha_n \rbrace$.
\end{example}

\begin{example}
Case~\ref{table_A}.\ref{An1}$(n,k)$, $n-k > k \ge 1$. We have $\Pi \setminus \Pi_L = \lbrace \alpha_k, \alpha_n \rbrace$ and $\Psi = \lbrace \overline \alpha_k, \overline \alpha_n \rbrace$.

Data for~$N_1$ ($k \ge 2$): $\Pi \setminus \Pi_1 = \lbrace \alpha_1, \alpha_k, \alpha_{n-1}, \alpha_n \rbrace$, $\Psi_1 = \lbrace \overline \alpha_k, \overline \alpha_{n-1} + \overline \alpha_n \rbrace \cup \lbrace \overline \alpha_n \rbrace$.

Data for~$N_1$ ($k = 1$): $\Pi \setminus \Pi_1 = \lbrace \alpha_1, \alpha_{n-1}, \alpha_n \rbrace$, $\Psi_1 = \lbrace \overline \alpha_{n-1} + \overline \alpha_n \rbrace \cup \lbrace \overline \alpha_n \rbrace$.

Data for~$N_{11}$ ($k \ge 2$): $\Pi \setminus \Pi_{11} = \lbrace \alpha_1, \alpha_k, \alpha_{n-1}, \alpha_n \rbrace$, $\Psi_{11} = \lbrace \overline \alpha_k, \overline \alpha_{n-1} \rbrace$.
Up to reduction of the ambient group, we get case~\ref{table_A}.\ref{An1}$(n-2,k-1)$.

Data for~$N_{11}$ ($k = 1$): $\Pi \setminus \Pi_{11} = \lbrace \alpha_1, \alpha_{n-1}, \alpha_n \rbrace$, $\Psi_{11} = \lbrace \overline \alpha_{n-1} \rbrace$.
Up to reduction of the ambient group, we get case~\ref{table_par1psi1}.\ref{No1}$(n-2,1)$, so $\Sigma_G(G/N_{11}) = \lbrace \alpha_2 + \ldots + \alpha_{n-1} \rbrace$.

Data for~$N_{12}$ ($k \ge 2$): $\Pi \setminus \Pi_{12} = \lbrace \alpha_1, \alpha_k, \alpha_{n-1}, \alpha_n \rbrace$, $\Psi_{12} = \lbrace \overline \alpha_{n} \rbrace$.
Up to reduction of the ambient group, we get case~\ref{table_par1psi1}.\ref{No1}$(1,1)$, so $\Sigma_G(G/N_{12}) = \lbrace \alpha_n \rbrace$.

Data for~$N_{12}$ ($k = 1$): $\Pi \setminus \Pi_{12} = \lbrace \alpha_1, \alpha_{n-1}, \alpha_n \rbrace$, $\Psi_{12} = \lbrace \overline \alpha_{n} \rbrace$.
Up to reduction of the ambient group, we get case~\ref{table_par1psi1}.\ref{No1}$(1,1)$, so $\Sigma_G(G/N_{12}) = \lbrace \alpha_n \rbrace$.

Data for~$N_2$ ($k \ge 2$): $\Pi \setminus \Pi_2 = \lbrace \alpha_k, \alpha_{k+1}, \alpha_n \rbrace$, $\Psi_2 = \lbrace \overline \alpha_k, \overline \alpha_k + \overline \alpha_{k+1} \rbrace$.
Up to reduction of the ambient group, we get case~\ref{table_A}.\ref{No5}($n-1,k$).

Data for~$N_2$ ($k = 1$): $\Pi \setminus \Pi_2 = \lbrace \alpha_1, \alpha_{2}, \alpha_n \rbrace$, $\Psi_2 = \lbrace \overline \alpha_1 \rbrace \cup \lbrace \overline \alpha_1 + \overline \alpha_{2} \rbrace$.

Data for~$N_{21}$ ($k = 1$): $\Pi \setminus \Pi_{21} = \lbrace \alpha_1, \alpha_{2}, \alpha_n \rbrace$, $\Psi_{21} = \lbrace \overline \alpha_1 \rbrace$.
Up to reduction of the ambient group, we get case~\ref{table_par1psi1}.\ref{No1}($1,1$), so $\Sigma_G(G/N_{21}) = \lbrace \alpha_1 \rbrace$.

Data for~$N_{22}$ ($k = 1$): $\Pi \setminus \Pi_{22} = \lbrace \alpha_1, \alpha_{2}, \alpha_n \rbrace$, $\Psi_{22} = \lbrace \overline \alpha_2 \rbrace$.
Up to reduction of the ambient group, we get case~\ref{table_par1psi1}.\ref{No1}$(n-2,1)$, so $\Sigma_G(G/N_{22}) = \lbrace \alpha_2 + \ldots + \alpha_{n-1} \rbrace$.

We see that the computation of $\Sigma_G(G/H)$ reduces to the same case~\ref{table_A}.\ref{An1} and also to case~\ref{table_A}.\ref{No5} with smaller values of the rank of~$G$, which enables one to complete the computation by induction.
\end{example}

\subsection{Proofs of Theorems~\ref{thm_par1psi2} and~\ref{thm_par2psi2} for the types \texorpdfstring{$\mathsf F_4$}{F\_4}, \texorpdfstring{$\mathsf E_6$}{E\_6}, \texorpdfstring{$\mathsf E_7$}{E\_7}, \texorpdfstring{$\mathsf E_8$}{E\_8}}

For each of the types $\mathsf F_4$, $\mathsf E_6$, $\mathsf E_7$, $\mathsf E_8$, both theorems are proved as results of case-by-case considerations implemented in Python according to the algorithm presented below.
In each case, we realize the sets $\Delta^+,\Pi$ as subsets in $\QQ^n$ as described in~\cite[\S\,12.1]{Hum2} ($n = 4$ for $\mathsf F_4$ and $n = 8$ otherwise). 

\smallskip

Algorithm~\newalg: \label{alg_D}

Step~\step: \label{Dstep1}
consider all possible parabolic subgroups $P \supset B^-$ with standard Levi subgroup~$L$; every such $P$ is determined by the subset $\Pi \setminus \Pi_L$ satisfying $|\Pi \setminus \Pi_L| = 1$ in the case of Theorem~\ref{thm_par1psi2} and $|\Pi \setminus \Pi_L| = 2$ in the case of Theorem~\ref{thm_par2psi2}.

The next steps are applied to a fixed choice of~$P$.

Step~\step: \label{Dstep2}
compute the set $\Phi^+$ and find all subsets $\Psi \subset \Phi^+$ such that $\Psi = \lbrace \lambda, \mu \rbrace$, where $\lambda \ne \mu$, $\lambda = \overline \alpha$ for some $\alpha \in \Pi \setminus \Pi_L$, and $\mu = \lambda + (\mu - \lambda)$ is the unique expression of~$\mu$ as a sum of two elements in~$\Phi^+$.

The next steps are applied to a fixed choice of~$\Psi$. Let $H = L \rightthreetimes H_u \subset P$ be the subgroup with~$\Psi(H) = \Psi$.

Step~\step: \label{Dstep3}
if $|\lbrace \delta \in \Delta^+ \mid \overline \delta = \lambda \rbrace| = 1$ or $|\lbrace \delta \in \Delta^+ \mid \overline \delta = \mu \rbrace| = 1$, then exit; otherwise continue.

Step~\step: \label{Dstep4}
apply Algorithm~\ref{alg_sph_mod} to the triple $(\Pi_L, \Delta^+_L, \lbrace \delta\in \Delta^+ \mid \overline \delta \in \lbrace \lambda, \mu \rbrace\rbrace)$ and let $\Theta$ denote the output.

Step~\step: \label{Dstep5}
if $\Theta$ is linearly dependent, then exit; otherwise continue.

Step~\step: \label{Dstep6}
apply Algorithm~\ref{alg_ba} to~$H$.

\smallskip

According to Proposition~\ref{prop_|Psi|=2}(\ref{prop_|Psi|=2_c}) and Lemma~\ref{lemma_sum}, at step~\ref{Dstep2} we obtain a list of all possible subgroups~$H \subset G$ with $|\Psi(H)|=2$.
Then at step~\ref{Dstep3} we exclude all subgroups~$H$ such that $\dim \mathfrak g(\lambda) = 1$ for some $\lambda \in \Psi(H)$: the SM-decomposition of~$\Psi(H)$ is nontrivial for such cases.
At steps~\ref{Dstep4} and~\ref{Dstep5} we find all cases where $H$ is spherical; see Proposition~\ref{prop_sph_lin_ind}.
Finally, at step~\ref{Dstep6} for every spherical~$H$ we compute several additional spherical subgroups $N_i$ with $\Psi(N_i) = 1$ such that $\Sigma_G(G/H)$ is the union of all sets $\Sigma_G(G/N_i)$.
Up to reduction of the ambient group, the sets $\Sigma_G(G/N_i)$ are then determined by Theorem~\ref{thm_par1psi1}.

After machine execution of Algorithm~\ref{alg_D}, we manually exclude all subgroups~$H$ where the SM-decomposition of~$\Psi(H)$ is nontrivial.
As a result, we get case~\ref{F433} in Table~\ref{table_par1psi2} and all the data in Tables~\ref{table_F4}--\ref{table_E8}.

\renewcommand{\arraystretch}{1}%

\newpage

%%%%%%%%%%%%%%%%%%%%%%%%%%%%%%%%%%%%%%%%%%%%%%%%%%%%%%%%%%%%%%%%%%%%%%%%%%%%%%%%%%%%%%%%%%

\end{document}